%
%
%
%
%
%
%
\documentclass[12pt]{amsart}
\usepackage{amssymb,mathrsfs,diagrams}
\AtEndDocument{\vfill\eject\batchmode}
\addtolength{\textwidth}{35mm}
\addtolength{\oddsidemargin}{-20mm}
\addtolength{\evensidemargin}{-20mm}
\addtolength{\topmargin}{-15mm}
\addtolength{\textheight}{35mm}
\addtolength{\footskip}{5mm}
\newtheorem{thm}{Theorem}
\newtheorem*{result}{Theorem}
\newtheorem{cor}{Corollary}[section]
\newtheorem{lemma}{Lemma}[section]
\newtheorem{prop}{Proposition}[section]
\newtheorem{obs}{Observation}[section]
\theoremstyle{definition}
\newtheorem{defn}{Definition}[section]

\newtheorem{remark}{Remark}[section]

\makeatletter
\DeclareSymbolFont{script}{U}{eus}{m}{n}
\DeclareMathSymbol{\Wedge}{0}{script}{"5E}
\DeclareSymbolFont{rmslops}{OT1}{cmr}{m}{sl}
\DeclareSymbolFontAlphabet{\mathrmsl}{rmslops}
\def\operator@font{\mathgroup\symrmslops}
\newenvironment{bulletlist}{\begin{list}{\labelitemi}%
{\setlength{\leftmargin}{\parindent}%
\advance\@listdepth1\relax%
\def\makelabel ##1{\hss \llap {\upshape ##1}}}}{\advance\@listdepth-1\relax%
\end{list}}
\newcommand{\step}[2]{\smallbreak\everypar{\setbox0=\lastbox
\hbox{\textbf{Step #1: #2}.} \everypar{}}}
\makeatother
\newcommand{\acknowledge}{\subsection*{Acknowledgements}}
\newcommand{\calsyma}[2]{\newcommand{#1}{{\mathcal{#2}}}}
\newcommand{\calsymb}[2]{\newcommand{#1}{{\mathscr{#2}}}}
\newcommand{\bbsymb}[2]{\newcommand{#1}{{\mathbb{#2}}}}
\newcommand{\liealg}[2]{\newcommand{#1}{{\mathfrak{#2}}}}
\bbsymb\C{C} \bbsymb\HQ{H}\bbsymb\N{N} \bbsymb\Q{Q}
\bbsymb\R{R} \bbsymb\V{V} \bbsymb\W{W} \bbsymb\Z{Z}
\calsymb\cA{A} \calsymb\cB{B} \calsymb\cC{C} \calsymb\cD{D} \calsymb\cE{E}
\calsymb\cF{F} \calsymb\cG{G} \calsymb\cH{H} \calsymb\cI{I} \calsymb\cJ{J}
\calsymb\cK{K} \calsymb\cL{L} \calsymb\cM{M} \calsymb\cN{N} \calsymb\cO{O}
\calsymb\cP{P} \calsyma\cQ{Q} \calsymb\cR{R} \calsymb\cS{S} \calsymb\cT{T}
\calsymb\cU{U} \calsymb\cV{V} \calsymb\cW{W} \calsymb\cX{X} \calsymb\cY{Y}
\calsymb\cZ{Z}
\liealg\gl{gl}\liealg\sgl{sl}\liealg\symp{sp}
\liealg\g{g}\liealg\p{p}
\newcommand{\eps}{\varepsilon}
\renewcommand{\geq}{\geqslant} \renewcommand{\leq}{\leqslant}
\renewcommand{\emptyset}{\varnothing}
\newcommand{\into}{\hookrightarrow}

\DeclareMathOperator{\ri}{r}
\DeclareMathOperator{\Ri}{R}
\DeclareMathOperator{\Wi}{W}
\DeclareMathOperator{\SO}{SO}
\DeclareMathOperator{\CO}{CO}

\DeclareMathOperator{\PGL}{PGL}
\DeclareMathOperator{\Aff}{Aff}

\DeclareMathOperator{\im}{im}
\DeclareMathOperator{\spanu}{span}
\DeclareMathOperator{\sym}{sym}
\DeclareMathOperator{\tr}{tr}
\DeclareMathOperator{\id}{id}
\DeclareMathOperator{\Hom}{Hom}
\newcommand{\cx}{^{\scriptscriptstyle\C}}
\newcommand{\ur}{_{\;\,\R}}
\newcommand{\Sc}{S\cx}
\renewcommand{\d}{{\mathrmsl d}}
\newcommand{\Proj}{{\mathrmsl P}}
\newcommand{\Gr}{{\mathrmsl{Gr}}}

\newcommand{\hp}[1][^]{#1{1,0}}
\newcommand{\am}[1][^]{#1{0,1}}
\newcommand{\Vhp}{\cV\hp}
\newcommand{\Vam}{\cV\am}
\newcommand{\mfd}{M}
\newcommand{\php}{\phi\hp[_]}
\newcommand{\phm}{\phi\am[_]}
\newcommand{\pip}{\pi\hp[_]}
\newcommand{\pim}{\pi\am[_]}
\newcommand{\Lp}{\cL\hp[_]}
\newcommand{\Lm}{\cL\am[_]}
\newcommand{\zers}{\underline{0}}
\newcommand{\infs}{\underline{\infty}}
\newcommand{\Lam}{{\mathit\Lambda}}

\newcommand{\Lmp}{\Lam^+}
\newcommand{\Lmm}{\Lam^-}
\newcommand{\sub}{\subseteq}
\newcommand{\tp}{\otimes}
\newcommand{\ds}{\oplus}
\newcommand{\Nb}{\cN}
\newcommand{\qD}{{\mathfrak D}}
\newcommand{\ul}{{\mathfrak U}}
\newcommand{\qs}{\cQ}
\newcommand{\rs}{\rho}
\newcommand{\tx}{\tilde x}
\newcommand{\tz}{\tilde z}
\newcommand{\ra}[1]{{\raise6pt\hbox{$#1$}}}
\newcommand{\swb}{\cU}

\newcommand{\iI}{\boldsymbol{i}}
\newcommand{\vecD}[3][D]{\bigl[\begin{smallmatrix} #2\\
#3 \end{smallmatrix}\bigr]_{#1}}
\newcommand{\VecD}[3][D]{\Bigl[\begin{matrix} #2\\
#3 \end{matrix}\Bigr]_{#1}}
\newcommand{\abrack}[1]{[\mkern-3mu[#1]\mkern-3mu]}
\newcommand{\mult}{^{\scriptscriptstyle\smash[b]{\times}}}
\begin{document}
\title[Projective geometry and the quaternionic Feix--Kaledin construction]
{Projective geometry\\ and the quaternionic Feix--Kaledin construction}
\author[A.W. Bor\'owka]{Aleksandra W. Bor\'owka}
\address{Aleksandra W. Bor\'owka \\
Institute of Mathematics\\ Jagiellonian University\\
ul. prof. Stanislawa Lojasiewicza 6\\
30-348 Krak\'ow\\ Poland}
\email{Aleksandra.Borowka@uj.edu.pl}
\author[D.M.J. Calderbank]{David M. J. Calderbank}
\address{David M. J. Calderbank \\ Department of Mathematical Sciences\\
University of Bath\\ Bath BA2 7AY\\ UK}
\email{D.M.J.Calderbank@bath.ac.uk}
\begin{abstract}
Starting from a complex manifold $S$ with a real-analytic c-projective
structure whose curvature has type $(1,1)$, and a complex line bundle $\cL\to
S$ with a connection whose curvature has type $(1,1)$, we construct the
twistor space $Z$ of a quaternionic manifold $M$ with a quaternionic circle
action which contains $S$ as a totally complex submanifold fixed by the
action.  This extends a construction of hypercomplex manifolds, including
hyperk\"ahler metrics on cotangent bundles, obtained independently by
B.~Feix~\cite{Feix,Feix2,Feix3} and D.~Kaledin~\cite{Kal,Kal2}.

When $S$ is a Riemann surface, $M$ is a self-dual conformal $4$-manifold, and
the quotient of $M$ by the circle action is an Einstein--Weyl manifold with an
asymptotically hyperbolic end~\cite{JT,Le}, and our construction coincides
with the construction presented by the first author in~\cite{Bor}. The
extension also applies to quaternionic K\"ahler manifolds with circle actions,
as studied by A.~Haydys~\cite{Hay} and N.~Hitchin~\cite{Hit2}.
\end{abstract}
\maketitle

\section*{Introduction}

The construction of hyperk\"ahler metrics on cotangent bundles of K\"ahler
manifolds has a distinguished history, going back to E.~Calabi's metric on the
cotangent bundle of $\C\Proj^n$~\cite{Calabi}, and its generalizations to
complex semisimple Lie groups and their flag
varieties~\cite{Biq,Kron,Kron2,Nak}.  General constructions were provided
independently by B.~Feix~\cite{Feix,Feix3} and D.~Kaledin~\cite{Kal,Kal2}, who
showed that on a complex manifold $S$, any real-analytic K\"ahler metric
induces a hyperk\"ahler metric on a neighbourhood of the zero section in
$T^*S$. In fact, both authors further established (see~\cite{Feix2} in Feix's
case) that any real-analytic complex affine connection on $S$ with curvature
of type $(1,1)$ induces a hypercomplex structure on a neighbourhood of the
zero section in $TS$, invariant under the action of the circle $S^1$.

An important generalization of hypercomplex manifolds are quaternionic
manifolds~\cite{Sal}, which are of particularly great interest when they admit
quaternionic K\"ahler metrics.  While the most famous problem in the area is
the LeBrun--Salamon conjecture~\cite{LeSa} on the classification in the
compact positive scalar curvature case, recently much attention has been given
to correspondences between $S^1$-invariant quaternionic K\"ahler and
hyperk\"ahler metrics in connection with theoretical physics, e.g., string
theory duality~\cite{ACDM,CFG,Hay,Hit2,MaSw}. Hence it is desirable to
generalize the Feix--Kaledin results to the quaternionic context.

In this paper we provide such a generalization, using Feix's twistorial
method~\cite{Feix,Feix2} (cf.~also~\cite{LeBr}) and two new ingredients of
independent interest. First, the complex manifold $S$ is endowed with a weaker
and more subtle geometric structure than an affine connection, namely a
\emph{c-projective structure} (see~\cite{CEMN}). Secondly, we introduce into
the construction a \emph{twist} by a holomorphic line bundle $\cL\to S$ with
connection. Then, for a given c-projective manifold $S$ we obtain a family of
quaternionic manifolds with $S^1$ symmetry containing $S$ as a totally complex
submanifold. Moreover, we prove that any such manifold arises in this way on a
neighbourhood of a generic fixed totally complex submanifold. In summary, we
develop a natural projective-geometric framework for the Feix--Kaledin results
which encompasses the recently studied $S^1$-invariant quaternionic K\"ahler
manifolds~\cite{Hay,Hit2} and describes their behaviour near a fixed
submanifold.

We present the general construction (Theorem~\ref{maintheorem},
cf.~\cite{BorPhD}) and its converse characterization
(Theorem~\ref{conversetheorem}) in Section~\ref{s:moc}, where we also motivate
the projective-geometric framework.  We begin by comparing the
``hypercomplexification'' of $S$ in $TS$ (cf.~\cite{Biel}) to the
complexification of a real-analytic manifold. In particular, results of
R.~Bielawski~\cite{Biel} and R.~Sz\"oke~\cite{Szoke} imply that a
real-analytic projective manifold $M$ has a complexification $M\cx\sub TM$
which meets the tangent bundle to any geodesic in a holomorphic submanifold
(Theorem~\ref{thm:pcomplex}), illustrating the role of projective geometry
already in this setting. We then show (Theorem~\ref{thm:totallycomplex}) that
the natural structure induced on a maximal totally complex submanifold of a
quaternionic manifold is a c-projective structure, and explain the general
construction with reference to the model example of quaternionic projective
space $\HQ\Proj^n$, which has $\C\Proj^n$ as a maximal totally complex
submanifold. The role of the twist is already apparent here, as the
Feix--Kaledin metric associated to $\C\Proj^n$ is the Calabi metric on
$T^*\C\Proj^n$, not $\HQ\Proj^n$.

The remainder of the paper gives details, applications and examples.  As the
construction uses diverse ingredients, for the convenience of the reader we
provide essential background on projective geometry in Section~\ref{s:bg} and
on quaternionic twistor theory in Section~\ref{s:qtt}.  We give the remaining
details of the proof of Theorem~\ref{maintheorem}, and prove
Theorem~\ref{conversetheorem}, in Section~\ref{s:detail}.
Section~\ref{s:examap} illustrates the scope of our construction through
examples and connections with other results in the area. Here we discuss first
the complex grassmannian, which is yet another twist starting from
$\C\Proj^n$, and not even locally hyperk\"ahler. Then we explain how Feix's
construction arises as a special case and also how the $4$-dimensional case is
connected with LeBrun's asymptotically hyperbolic Einstein--Weyl
structures~\cite{Bor,Le}. To conclude, in Theorem~\ref{Swann}, we relate the
quaternionic and hypercomplex constructions via twisted Swann
bundles~\cite{Swann,Joyce,Joyce2,PPS,Sal} and twisted Armstrong
cones~\cite{Arm}, and then use this in Theorem~\ref{thm:qK} to characterize
quaternionic K\"ahler metrics arising in the Haydys--Hitchin
correspondence~\cite{Hay,Hit2}.

\section{Motivation and overview of the construction}\label{s:moc}

\subsection{Complexification and projective geometry}\label{s:cpg}
 
Any real-analytic $n$-manifold $\mfd$ has a complexification, which is a
holomorphic $n$-manifold $\mfd\cx$ containing $\mfd$ as the fixed point set
real structure $\rs\colon\mfd\cx\to \mfd\cx$ (an antiholomorphic map with
$\rs^2=\id$). The underlying complex manifold of $\mfd\cx$, which is a real
$2n$-manifold $\mfd\cx\ur$ with an integrable complex structure $J$, has
$\mfd$ as a \emph{totally real submanifold}, i.e., $T\mfd\cap J(T\mfd)=0$, so
$T \mfd\cx\ur|_{\mfd}=T\mfd\ds J(T\mfd)$.  Since $J(T\mfd)\cong T\mfd$ is the
normal bundle to $\mfd$ in $\mfd\cx\ur$, there is a local isomorphism along
$\mfd$ between $\mfd\cx\ur$ and $T\mfd$, where $\mfd$ is identified with the
zero section in $T\mfd$, along which $J$ is an isomorphism between horizontal
and vertical tangent spaces in $T(T\mfd)$.  Such a complexification of $\mfd$
inside $T\mfd$ is unique up to unique local automorphism inducing the identity
to first order along $\mfd$; furthermore, the complexification can be
determined uniquely by choosing an affine connection $D$ on $\mfd$ and
requiring that the tangent map of any geodesic is
holomorphic~\cite{Biel,Szoke}. However, the \emph{unparametrized} geodesics of
$D$ depend only on its projective class in the following sense.

\begin{defn}\label{d:proj} A \emph{projective manifold} is a manifold $\mfd$
with \emph{projective structure}, i.e., a \emph{projective equivalence class}
$\Pi_r=[D]_r$ of torsion-free affine connections, where $\tilde D\sim_r D$ if
there is a $1$-form $\gamma\in\Omega^1(\mfd)$ such that for all vector fields
$X,Y\in\Gamma(T\mfd)$,
\begin{equation}\label{eq:proj}
\tilde D_XY= D_XY +\abrack{X,\gamma}^r(Y),\quad\text{where}\quad
\abrack{X,\gamma}^r(Y)=\gamma(X)Y+\gamma(Y)X.
\end{equation}
\end{defn}
Hence the results of Bielawski and Sz\"oke~\cite{Biel,Szoke} have the
following consequence.
\begin{thm}\label{thm:pcomplex} A real-analytic projective manifold $\mfd$ has
a complexification $\mfd\cx\sub TM$ which meets the tangent bundle to any
geodesic in $M$ in a holomorphic submanifold.
\end{thm}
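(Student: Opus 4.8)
The plan is to deduce the statement from the Bielawski--Sz\"oke construction~\cite{Biel,Szoke}, exploiting the fact that being a holomorphic submanifold is a parametrization-independent notion. First I would fix a torsion-free connection $D\in\Pi_r$ representing the projective structure and invoke~\cite{Biel,Szoke} to obtain the complexification $\mfd\cx\sub T\mfd$, realized as a germ along the zero section $\mfd$ of an integrable complex structure $J$, characterized by the requirement that the tangent map of every $D$-geodesic be holomorphic. Concretely, for a $D$-geodesic $c$ with affine parameter $s$, the map
\begin{equation*}
F_c\colon s+\iI t\longmapsto t\,\dot c(s)\in T_{c(s)}\mfd\sub T\mfd
\end{equation*}
is holomorphic from a strip about $\R\sub\C$ into $\mfd\cx$, reducing at $t=0$ to $c$ on the zero section.

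The key observation is then that the image of $F_c$ is exactly the tangent bundle $T\gamma$ of the \emph{unparametrized} geodesic $\gamma$ underlying $c$: as $(s,t)$ vary, $t\,\dot c(s)$ runs through all real multiples of $\dot c(s)$ at each point of $\gamma$, sweeping out $T\gamma\sub T\mfd$. Hence $\mfd\cx\cap T\gamma$ is the image of a holomorphic immersion of an open subset of $\C$, so it is a holomorphic submanifold, which is the assertion for $\gamma$. Because projectively equivalent connections share the same unparametrized geodesics (Definition~\ref{d:proj}), the family of surfaces $\{T\gamma\}$ depends only on $\Pi_r$, and so every geodesic of the projective structure is covered.

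What makes this genuinely projective --- and where I expect the main work to lie --- is the reparametrization-invariance of the conclusion. Passing to $\tilde D\sim_r D$ replaces $c$ by a reparametrized geodesic $\tilde c=c\circ\phi$, and a short computation gives $F_{\tilde c}=F_c\circ\phi\cx$ to first order along $\R$, where $\phi\cx$ is the holomorphic extension of the real-analytic $\phi$; thus $F_{\tilde c}$ need \emph{not} be $J$-holomorphic, yet it traces out the same surface $T\gamma$, which remains a $J$-holomorphic curve since it is the image of the genuinely holomorphic $F_c$. The main obstacle is precisely this disentangling of the (non-projective) parametrization from the (projective) surface $T\gamma$ and complex structure $J$. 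Granting it, one can go further and check that the germ $\mfd\cx$ itself is independent of $D\in\Pi_r$: the holomorphic curves $T\gamma$ filling out $T\mfd$, together with the standard action of $J$ along $\mfd$ identifying horizontal and vertical subspaces of $T(T\mfd)$, determine $J$ by real-analytic continuation. The theorem as stated, however, needs only existence, which the fixed choice of $D$ already supplies.
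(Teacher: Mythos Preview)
Your proposal is correct and matches the paper's (implicit) argument: the paper simply records the theorem as a consequence of Bielawski--Sz\"oke, noting beforehand that unparametrized geodesics depend only on the projective class, and you have filled in precisely those details --- fix $D\in\Pi_r$, take the Bielawski--Sz\"oke complexification, and observe that the image of the holomorphic tangent map $F_c$ is $T\gamma$. Your final paragraph on independence of $\mfd\cx$ from $D$ goes beyond what is needed (as you yourself note), and the ``main obstacle'' you flag is not really present for the existence statement, since every projective geodesic is already the image of some affinely parametrized $D$-geodesic.
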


\subsection{Quaternionic manifolds and totally complex submanifolds}\label{s:qm-tcs}

Recall~\cite{Sal} that a \emph{quaternionic structure} on a $4n$-manifold $M$
is a bundle $\qs$ of Lie subalgebras of the endomorphism bundle $\gl(TM)$ of
$TM$ which is pointwise isomorphic to the Lie algebra $\symp(1)$ of imaginary
quaternions acting on $\R^{4n}\cong\HQ^n$; a \emph{quaternionic connection}
$\qD$ on $(M,\qs)$ is a torsion-free affine connection preserving $\qs$.  If
$(M,\qs)$ admits a quaternionic connection (satisfying a curvature condition
when $n=1$ which we discuss later), we say it is a \emph{quaternionic
  manifold}.

A submanifold $S$ of $(M,\qs)$ is \emph{totally complex}~\cite{Alex2} if there
is a section $J$ of $\qs|_S$ with $J^2=-\id$ such that:
\begin{bulletlist}
\item $J(TS)\sub TS$ (so that $J$ is an almost complex structure on $S$);
\item for all $I\in J^\perp$, $I(TS)\cap TS = 0$, where $J^\perp:=\{I\in \qs :
IJ = -JI\}$.
\end{bulletlist}

If $M$ has real dimension $4n$, it follows that $S$ has real dimension $\leq
2n$. If $S$ is \emph{maximal}, i.e., dimension $2n$, then $TM|_S=TS\ds NS$
where $(NS)_u=I(T_uS)$ for any nonzero $I\in J^\perp_u$. (Any other element of
$J^\perp_u$ is a pointwise linear combination of $I$ and $IJ$, so $(NS)_u$ is
independent of the choice of $I$, and the map $J^\perp_u\times T_u S\to
(NS)_u; (I,X)\mapsto IX$ induces an isomorphism $J^\perp_u\tp_\C T_uS\cong
(NS)_u$, where $J^\perp_u$ and $T_u S$ are complex vector spaces via right
multiplication by $J$ and its left action respectively.)

\begin{lemma}\label{lem:totallycomplex} Let $S$ be a maximal totally complex
submanifold of $(M,\qs)$ and $\qD$ a quaternionic connection, and let $\pi
\colon TM|_S\to TS$ be the projection along $NS$. Then the projection $D_X
Y:=\pi(\qD_XY)$, for vector fields $X,Y$ on $S$, defines a torsion-free
complex connection \textup(i.e., $DJ=0$\textup) on $S$, and hence $J$ is
integrable on $S$.
\end{lemma}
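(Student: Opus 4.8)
The plan is to verify, in order, that $D$ is a well-defined affine connection on $S$, that it is torsion-free, and that $DJ=0$; the integrability of $J$ is then immediate from the standard fact that an almost complex structure admitting a torsion-free connection that parallelises it has vanishing Nijenhuis tensor, and so is integrable by Newlander--Nirenberg.

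First I would check that $D$ is well defined and is a connection. For $X$ tangent to $S$ and $Y\in\Gamma(TS)$, any extension of $Y$ to a section of $TM$ near $S$ gives the same value of $\qD_XY$ along $S$, because $X$ is tangent to $S$ and so $\qD_XY|_S$ depends only on $Y|_S$; applying the bundle projection $\pi$ then yields a well-defined $D_XY\in\Gamma(TS)$. The connection axioms for $D$ (tensoriality in $X$, the Leibniz rule in $Y$) descend from those of $\qD$ since $\pi$ is fibrewise linear. Torsion-freeness is equally formal: using that $\qD$ is torsion-free and that the bracket of two vector fields tangent to $S$ is again tangent to $S$,
\begin{equation*}
D_XY-D_YX=\pi(\qD_XY-\qD_YX)=\pi([X,Y])=[X,Y].
\end{equation*}

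The substance of the lemma is $DJ=0$, for which I would first record two structural facts. Since $\qD$ preserves $\qs$, the derivative $\qD_XJ$ (for $X$ tangent to $S$) is again a section of $\qs|_S$, and differentiating $J^2=-\id$ gives $(\qD_XJ)J+J(\qD_XJ)=0$, so $\qD_XJ\in J^\perp$ pointwise; consequently $(\qD_XJ)Y\in NS$ for $Y\in TS$. Secondly, $J$ preserves the splitting $TM|_S=TS\ds NS$: it preserves $TS$ by hypothesis, and writing $(NS)_u=I(T_uS)$ for nonzero $I\in J^\perp_u$, one has $JI=\frac12[J,I]\in\qs$ (since $IJ=-JI$) with $(JI)J=-J(JI)$, so $JI\in J^\perp_u$ and $J(NS)_u=(JI)(T_uS)=(NS)_u$; thus $J$ commutes with $\pi$ along $S$. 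With these in hand, I would expand for $X,Y\in\Gamma(TS)$
\begin{equation*}
(D_XJ)(Y)=\pi\bigl(\qD_X(JY)\bigr)-J\,\pi(\qD_XY)
=\pi\bigl((\qD_XJ)Y\bigr)+\pi\bigl(J\,\qD_XY\bigr)-J\,\pi(\qD_XY).
\end{equation*}
The last two terms cancel because $\pi J=J\pi$, and the first vanishes because $(\qD_XJ)Y\in NS$ lies in the kernel of $\pi$; hence $(D_XJ)(Y)=0$.

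The only genuinely non-formal step is the pair of structural facts above, since this is precisely where the totally complex condition (through $J(TS)\sub TS$ and $(NS)_u=I(T_uS)$ with $I\in J^\perp$) and the quaternionic condition (that $\qD$ preserves $\qs$) enter; everything else is bundle bookkeeping inherited from $\qD$. Once $DJ=0$ and the torsion-freeness of $D$ are established, $J$ is integrable.
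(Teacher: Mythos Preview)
Your proof is correct and follows essentially the same approach as the paper: project along $NS$, use torsion-freeness of $\qD$ for the torsion computation, and for $DJ=0$ use that $\qD_XJ\in J^\perp$ together with $\pi J=J\pi$. Your version is in fact more careful than the paper's, which writes $D_XY-D_YX=\pi([X,Y])=0$ (a typo---it should be $[X,Y]$, as you have), and you also spell out why $J$ preserves $NS$ and hence commutes with $\pi$, which the paper merely asserts.
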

\begin{proof} Clearly $D$ is a torsion-free connection on $S$: for any vector
fields $X,Y$ on $S$, $D_XY-D_YX = \pi([X,Y])=0$. Furthermore,
\begin{align*}
(D_XJ)Y=D_X(JY)-JD_XY&=\pi(\qD_X(JY))-J\pi(\qD_XY)\\
&=\pi (\qD_XJ)Y+(\pi J-J\pi)\qD_XY=0,
\end{align*}
since $\qD_XJ$ is a section of $J^\perp$, and $J$ commutes with $\pi$.
\end{proof}
If $\tilde\qD$ is another quaternionic connection on $M$, it is well
known~\cite{Alex} that there is a $1$-form $\gamma$ on $M$ such that
$\tilde\qD_X Y = \qD_X Y + \abrack{X,\gamma}^q(Y)$, where
\begin{equation}\label{eq:quat}
\textstyle \abrack{X,\gamma}^q(Y):=\frac{1}{2}(\gamma(X)Y+\gamma(Y)X
-\sum_{i=1}^3\bigl(\gamma(J_iX)J_iY+\gamma(J_iY)J_iX)\bigr)
\end{equation}
where $J_1,J_2,J_3$ is any local anticommuting frame of $\qs$ with
$J_i^{\,2}=-\id$. Thus, given one quaternionic connection $D$, we can
construct all others using $\abrack{\cdot,\cdot}^q$.

For a maximal totally complex submanifold $S\sub M$, we may take the
anticommuting frame defined by the given complex structure $J$ preserving
$TS$, a local section $I$ of $J^\perp$ with $I^2=-\id$, and $K=IJ$. Then for
vector fields $X,Y$ along $S$, we compute
\begin{align} \nonumber
\pi(\tilde\qD_X Y -\qD_X Y)=&\pi(\abrack{X,\gamma}^q(Y))
=\abrack{X,\gamma}^c(Y),\qquad\text{where}\\
\abrack{X,\gamma}^c(Y)
:=&\tfrac12 \bigl(\gamma(Y)Z+\gamma(Z)Y-(\gamma(JY)JZ+\gamma(JZ)JY)\bigr)
\label{eq:cproj}\end{align}
and we use $\pi(IX)=\pi(KX)=0$. This prompts the following definition.

\begin{defn} A \emph{c-projective manifold} is a manifold $S$ with an
integrable complex structure $J$ and a \emph{c-projective structure}, i.e., a
\emph{c-projective equivalence} class $\Pi_c=[D]_c$ of torsion-free complex
connections, where $\tilde D\sim_c D$ if there is a $1$-form $\gamma$ such
that for all vector fields $X,Y$ on $S$, $\tilde D_X Y = D_X Y
+\abrack{X,\gamma}^c(Y)$.
\end{defn}
This is complex, though not necessarily holomorphic, analogue of a real
projective structure (see \S\ref{s:parabolic} and~\cite{CEMN,Hrdina,Ish,Y},
some of which use misleading terms ``holomorphically projective'' and
``h-projective''). The observations above imply the following.

\begin{thm}\label{thm:totallycomplex} Let $S$ be a maximal totally complex
submanifold of a quaternionic manifold $(M,\qs)$. Then $S$ is a c-projective
manifold, whose c-projective structure consists of the connections induced by
quaternionic connections on $M$ via Lemma~\textup{\ref{lem:totallycomplex}}.
\end{thm}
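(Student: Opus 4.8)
The plan is to assemble the statement from Lemma~\ref{lem:totallycomplex} together with the change-of-connection computation~\eqref{eq:cproj} established above. Since $\mfd$ is quaternionic it admits at least one quaternionic connection $\qD$, and Lemma~\ref{lem:totallycomplex} shows that $\qD$ induces, via $D_XY=\pi(\qD_XY)$, a torsion-free connection $D$ on $S$ with $DJ=0$, while also forcing $J$ to be integrable. Thus $S$ is a complex manifold and each induced $D$ is a genuine complex connection, a legitimate candidate to represent a c-projective structure. It remains to check that the family of all such induced connections is exactly one c-projective equivalence class.

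For one inclusion I would verify that any two induced connections are c-projectively equivalent. By the result of Alexandrov~\cite{Alex} recalled above, two quaternionic connections satisfy $\tilde\qD_XY=\qD_XY+\abrack{X,\gamma}^q(Y)$ for some $1$-form $\gamma$ on $\mfd$. Restricting $X,Y$ to vector fields tangent to $S$ and applying $\pi$, the computation~\eqref{eq:cproj} yields $\tilde D_XY=D_XY+\abrack{X,\gamma}^c(Y)$, in which only $\gamma|_{TS}$ appears. Hence $\tilde D\sim_c D$, so all induced connections lie in a single class $[D]_c$.

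For the reverse inclusion I would realize every representative of $[D]_c$ as an induced connection. Given $D$ induced by $\qD$ and an arbitrary $1$-form $\gamma_S$ on $S$, extend $\gamma_S$ to a $1$-form $\gamma$ on $\mfd$ with $\gamma|_{TS}=\gamma_S$; any such extension suffices, as only $\gamma|_{TS}$ enters~\eqref{eq:cproj}. Then $\tilde\qD:=\qD+\abrack{\cdot,\gamma}^q$ is again a quaternionic connection, and by the previous step it induces $D+\abrack{\cdot,\gamma_S}^c$ on $S$. As $\gamma_S$ ranges over all $1$-forms on $S$ and every representative of $[D]_c$ has precisely this form, the induced connections exhaust the class.

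The substantive input, the identity $\pi(\abrack{X,\gamma}^q(Y))=\abrack{X,\gamma}^c(Y)$ for $X,Y\in\Gamma(TS)$ obtained from the anticommuting frame $J,I,K=IJ$ and the totally complex relations $\pi(IX)=\pi(KX)=0$, is already available via~\eqref{eq:cproj}. I therefore expect the only genuinely new point to be the reverse inclusion, which is essentially formal and rests on the unobstructed extension of a $1$-form from $S$ to $\mfd$. Accordingly I anticipate no serious obstacle: the theorem amounts to the bookkeeping that combines the two inclusions to identify the induced connections with a full c-projective equivalence class.
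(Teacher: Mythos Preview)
Your proposal is correct and follows the paper's approach: the paper simply states that ``the observations above imply the following'' and gives no further argument, relying on Lemma~\ref{lem:totallycomplex} together with the computation~\eqref{eq:cproj} exactly as you do. Your explicit treatment of the reverse inclusion (extending $\gamma_S$ to $\gamma$ on $M$) is more thorough than the paper, which leaves this implicit, but it is the natural completion and introduces nothing foreign to the argument.
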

Since the normal bundle of $S$ in $M$ is isomorphic to $TS\tp_\C J^\perp$, a
neighbourhood of $S$ in $M$ is isomorphic to a neighbourhood of the zero
section in $TS\tp_\C J^\perp$.

We show in~\S\ref{s:parabolic} that the \emph{c-projective curvature} of $S$
has type $(1,1)$ with respect to $J$.  Conversely, as we shall see, the
quaternionic Feix--Kaledin construction exhibits every real-analytic
c-projective manifold with type $(1,1)$ c-projective curvature as a maximal
totally complex submanifold of a quaternionic manifold.

\subsection{The model example and the twistor construction}
\label{motex}

Given a quaternionic vector space $W\cong \HQ^{n+1}$, its quaternionic
projectivization $M=\Proj_\HQ(W)\cong \HQ\Proj^n$ has a canonical quaternionic
structure: a point $H\in M$ is a $1$-dimensional quaternionic subspace of $W$,
and its tangent space $T_H M$ is the space of quaternionic linear maps $H\to
W/H$, which is itself a quaternionic vector space; the action of the imaginary
quaternions on $T_H M$ defines an $\symp(1)$ subalgebra
$\qs_H\cong\sgl(H,\HQ)\sub\gl(T_H M)$. Now let $W_\C$ be the underlying
complex vector space of $W$ with respect to one of its complex structures
$J$. Then there is a natural map $\pi_M$ from $Z=\Proj(W_\C)\cong
\C\Proj^{2n+1}$ to $M$ whose fibre at $H\in M$ is $\Proj(H_\C)\cong
\C\Proj^1$, which is isomorphic to the $2$-sphere of unit imaginary
quaternions in $\sgl(H,\HQ)$. These fibres are fixed by the antiholomorphic
involution of $Z$ induced by any nonzero element of $J^\perp$.

Now let $W_\C=W\hp\ds W\am$, where $W\hp\cong W\am\cong \C^{n+1}$ are maximal
totally complex subspaces of $W$ with respect to the chosen complex structure
$J$, i.e., $JW\hp = W\hp$, $JW\am= W\am$, and $I W\hp=W\am$ for any nonzero
$I\in J^\perp$.  Then $\Proj(W\hp)$ and $\Proj(W\am)$ are disjoint projective
$n$-subspaces of $Z=\Proj(W_\C)$, and $S:=\pi_M(\Proj(W\hp))=
\pi_M(\Proj(W\am))\cong \C\Proj^n$ is a maximal totally complex submanifold of
$M\cong \HQ\Proj^n$.

\begin{prop} $Z\setminus\Proj(W\hp)$ is canonically isomorphic to \textup(the
total space of\textup) the vector bundle $\Hom(\cO_{W\am}(-1),W\hp)\to
\Proj(W\am)$, with fibre $\Hom(\tx,W\hp)$ over $\tx\in\Proj(W\am)$, and
similarly $Z\setminus\Proj(W\am)\cong
\Hom(\cO_{W\hp}(-1),W\am)\to\Proj(W\hp)$. Furthermore the blow-up of $Z$ along
$\Proj(W\hp)\sqcup\Proj(W\am)$ is canonically isomorphic to the
$\C\Proj^1$-bundle
\begin{equation*}
\smash{\hat Z}:=\Proj(\cO(-1,0)\ds\cO(0,-1))\to\Proj(W\hp)\times\Proj(W\am),
\end{equation*}
whose fibre over $(x,\tx)$ is $\Proj(x\ds \tx)$.
\end{prop}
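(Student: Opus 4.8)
The plan is to treat $W_\C=W\hp\ds W\am$ as an abstract decomposition into complex subspaces, with $Z=\Proj(W_\C)$, and to read off all three assertions from the linear algebra of lines in $W_\C$. Write $p\colon W_\C\to W\hp$ and $\tilde p\colon W_\C\to W\am$ for the two projections. A point $z=[\ell]\in Z$ is a complex line $\ell\sub W_\C$, and $z\notin\Proj(W\hp)$ precisely when $\ell\not\sub W\hp$, equivalently when $\tilde p$ restricts to an isomorphism $\ell\to\tilde p(\ell)=:\tx\in\Proj(W\am)$. In that case $\ell$ is the graph of $\phi:=(p|_\ell)\circ(\tilde p|_\ell)^{-1}\in\Hom(\tx,W\hp)$, so $z\mapsto(\tx,\phi)$ defines a map from $Z\setminus\Proj(W\hp)$ to the total space of $\Hom(\cO_{W\am}(-1),W\hp)$, with inverse sending $(\tx,\phi)$ to the line $\{\phi(v)+v:v\in\tx\}$. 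Both maps are visibly holomorphic in affine coordinates, giving the first isomorphism, under which $\Proj(W\am)$ is the zero section $\phi=0$; exchanging $W\hp$ and $W\am$ gives the second.

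For the blow-up I would first build the contraction $f\colon\hat Z\to Z$. Over $(x,\tx)\in\Proj(W\hp)\times\Proj(W\am)$ the lines $x$ and $\tx$ span a $2$-plane $x\ds\tx\sub W_\C$, and $\cO(-1,0)\ds\cO(0,-1)$ is precisely the rank-$2$ subbundle of the trivial bundle $\underline{W_\C}$ with fibre $x\ds\tx$; its fibrewise projectivization therefore maps tautologically to $\Proj(W_\C)=Z$, sending $(x,\tx,\ell)$ with $\ell\in\Proj(x\ds\tx)$ to $[\ell]$. Over $Z':=Z\setminus(\Proj(W\hp)\cup\Proj(W\am))$ this is an isomorphism: such an $\ell$ meets neither $W\hp$ nor $W\am$, so $x=p(\ell)$, $\tx=\tilde p(\ell)$ and $\ell\sub x\ds\tx$ recover the preimage. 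Meanwhile $f^{-1}(\Proj(W\hp))$ is cut out by $\ell\sub x\ds\tx$ being forced to equal $x$ (since $\tx\cap W\hp=0$), a copy of $\Proj(W\hp)\times\Proj(W\am)$ mapping onto $\Proj(W\hp)$ by the first projection, and symmetrically for $\Proj(W\am)$.

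It then remains to check that $f$ is the blow-down. As $\Proj(W\hp)$ and $\Proj(W\am)$ are disjoint and blowing up is local near each centre, the open sets $f^{-1}(Z\setminus\Proj(W\am))$ and $f^{-1}(Z\setminus\Proj(W\hp))$ cover $\hat Z$, and it suffices, by symmetry, to work near $\Proj(W\hp)$. There the second isomorphism identifies $Z\setminus\Proj(W\am)$ with the total space of $E:=\Hom(\cO_{W\hp}(-1),W\am)\to\Proj(W\hp)$ and $\Proj(W\hp)$ with its zero section. Blowing up the zero section of a vector bundle $E\to B$ yields the total space of $\cO_{\Proj(E)}(-1)\to\Proj(E)$; here $\Proj(E)\cong\Proj(W\hp)\times\Proj(W\am)$, since tensoring by a line bundle does not change the projectivization, and $\cO_{\Proj(E)}(-1)\cong\cO(1,0)\tp\cO(0,-1)$. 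On the $\hat Z$ side, $f^{-1}(Z\setminus\Proj(W\am))$ deletes the locus $\ell=\tx$, and fibrewise $\Proj(x\ds\tx)\setminus\{\tx\}\cong\Hom(x,\tx)=x^*\tp\tx$ is again the total space of $\cO(1,0)\tp\cO(0,-1)$, compatibly with $f$. Hence $f$ restricts to the blow-down over each centre and is an isomorphism over $Z'$, exhibiting $\hat Z$ as the blow-up. The main obstacle is exactly this last matching: keeping track of the line-bundle twists so that both local models are $\cO(1,0)\tp\cO(0,-1)$, and checking that $f$ restricts on the exceptional divisors to the canonical projectivized-normal-bundle maps. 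Equivalently, one may argue by the universal property, noting that $f^{-1}$ of each centre's ideal sheaf is invertible, so that $f$ factors through the blow-up by a proper morphism that is bijective between smooth varieties and hence an isomorphism.
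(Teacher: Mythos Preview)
Your proof is correct and follows essentially the same approach as the paper: for the first assertion you identify lines transverse to $W\hp$ with graphs of linear maps $\tx\to W\hp$, exactly as the paper does, and for the blow-up you reduce to the standard fact that the blow-up of (the zero section of) a vector bundle is the total space of the tautological line bundle over its projectivization. The paper's proof is much terser, simply citing this fact; your version is a more explicit unpacking of the same argument, including the line-bundle bookkeeping $\cO_{\Proj(E)}(-1)\cong\cO(1,0)\tp\cO(0,-1)$ that the paper leaves implicit.
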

\begin{proof} The fibre of the map $Z\setminus\Proj(W\hp)\to \Proj(W\am);
[w+\tilde w]\mapsto [\tilde w]$ over $\tx\in \Proj(W\am)$ is
$\Proj(W\hp\ds\tx)\setminus\Proj(W\hp)$. Any $1$-dimensional subspace of
$W\hp\ds\tx$ transverse to $W\hp$ is the graph of linear map $\tx\to W\hp$,
yielding an isomorphism $\Proj(W\hp\ds\tx)\setminus\Proj(W\hp)\to
\Hom(\tx,W\hp)$.  The isomorphism of $Z\setminus\Proj(W\am)$ with
$\Hom(\cO_{W\hp}(-1),W\am)$ is analogous, and $\smash{\hat Z}$ is the blow-up
of $Z$ because (see~\S\ref{s:pb-blow-up}) the blow-up of a vector space $E$ at
the origin is isomorphic to the total space of the tautological bundle
$\cO_E(-1)\to\Proj(E)$.
\end{proof}
Thus $Z$ may be obtained from $\Proj(W\hp)\times\Proj(W\am)$ by gluing
together the vector bundles $\Hom(\cO_{W\hp}(-1),W\am)\to\Proj(W\hp)$ and
$\Hom(\cO_{W\am}(-1),W\hp)\to\Proj(W\am)$ to obtain a blow-down of
$\Proj(\cO(-1,0)\ds\cO(0,-1))$ along its two canonical (``zero and infinity'')
sections.  Each fibre $\Proj(x\ds \tx)$ then maps to a projective line in $Z$
with normal bundle isomorphic to $\cO_{x\ds \tx}(1) \tp\C^{2n}$, and these are
the fibres of $Z$ over $S\sub M$.

This picture generalizes using an extension to quaternionic manifolds,
introduced by S.~Salamon~\cite{Sal0,Sal}, of Penrose's twistor theory for
self-dual conformal manifolds~\cite{AHS,Pen}.  The \emph{twistor space} of a
quaternionic $4n$-manifold $(M,\qs)$---or, for $n=1$, a self-dual conformal
manifold---is the total space $Z$ of the $2$-sphere bundle $\pi_M\colon Z\to
M$ of elements of $\qs$ which square to $-1$. Salamon showed that $Z$ admits an
integrable complex structure (and hence is a holomorphic $(2n+1)$-manifold)
such that the involution $\rs$ of $Z$ sending $J$ to $-J$ is antiholomorphic,
and the fibres of $\pi_M$ are \emph{real twistor lines}, i.e., holomorphically
embedded, $\rs$-invariant projective lines with normal bundle isomorphic to
$\cO(1)\tp\C^{2n}$. The following converse will be crucial to our main
construction.

\begin{result} Let $Z$ be a holomorphic $(2n+1)$-manifold equipped with an
antiholomorphic involution $\rs\colon Z\to Z$ containing a real twistor line
$u$ on which $\rs$ has no fixed points. Then the space of such real twistor lines
\textup(i.e., those with no fixed points of $\rho$\textup) is a
$4n$-dimensional quaternionic manifold $(M,\qs)$ such that $(Z,\rs)$ is
locally isomorphic to its twistor space.
\end{result}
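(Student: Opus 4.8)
The plan is to realize $M$ as the real slice of a Kodaira moduli space of the twistor lines, and then transport the holomorphic data of $Z$ to an almost quaternionic structure carrying a compatible torsion-free connection. First I would apply Kodaira's theorem on deformations of a compact complex submanifold to a real twistor line $X\into Z$, with $X\cong\Proj^1$ and normal bundle $N\cong\cO(1)\tp\C^{2n}$. Since $H^1(\Proj^1,\cO(1))=0$ we get $H^1(X,N)=0$, so the deformations are unobstructed and the parameter space $\cM$ of nearby lines is a complex manifold with $T_{[X]}\cM\cong H^0(X,N)$; as $H^0(\Proj^1,\cO(1))\cong\C^2$, this gives $\dim_\C\cM=\dim_\C H^0(X,N)=4n$. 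The involution $\rs$ carries twistor lines to twistor lines and hence induces an antiholomorphic involution $\tau$ on $\cM$; its fixed-point set, intersected with the open condition that $\rs$ act freely on the line, is a real-analytic $4n$-manifold $M$ parametrizing the $\rs$-invariant real twistor lines, with $T_mM$ the $\tau$-fixed real subspace of $H^0(X_m,N)$.

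Next I would pin down the pointwise structure. Writing $U:=H^0(\Proj^1,\cO(1))\cong\C^2$ and using the splitting $N\cong\cO(1)\tp\C^{2n}$, I identify $H^0(X_m,N)\cong U\tp_\C E$ with $E\cong\C^{2n}$. Because $\rs$ restricts to a fixed-point-free antiholomorphic involution (the antipodal map) on $X_m\cong\Proj(U)$, its lift to $\cO(1)$ is a quaternionic structure $\sigma_U$ on $U$ with $\sigma_U^2=-\id$, making $U\cong\HQ$; compatibility of $\tau=\sigma_U\tp\sigma_E$ forces a quaternionic structure on $E$, so that $T_mM=(U\tp_\C E)^\tau$ is the underlying real vector space of the quaternionic module $U\tp_\C E$. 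I then set $\qs_m\sub\gl(T_mM)$ to be the image of $\sgl(1,\HQ)\cong\symp(1)$ acting by $A\mapsto A\tp\id_E$ on the $U$-factor. Equivalently, evaluation $\mathrm{ev}_j\colon H^0(X_m,N)\to N_j$ at a point $j\in X_m$ restricts to a real-linear isomorphism $T_mM\to N_j$ onto the complex vector space $N_j$ (injectivity holds because a real section vanishing at $j$ also vanishes at its antipode $\rs(j)$, hence vanishes identically), and pulling back the complex structure of $N_j$ yields $J_j\in\qs_m$, exhibiting $X_m$ as the $2$-sphere of complex structures in $\qs_m$.

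The remaining and hardest step is to upgrade $(M,\qs)$ to a \emph{quaternionic} manifold by producing a torsion-free connection $\qD$ preserving $\qs$, and to identify $(Z,\rs)$ with its twistor space. I would work on the correspondence space $\cF:=\{(m,j):m\in M,\ j\in X_m\}$, with projections $p\colon\cF\to M$ (fibre $X_m\cong\Proj^1$) and $q\colon\cF\to Z$, $(m,j)\mapsto j$. The twistor lines through a fixed $z\in Z$ are cut out by sections of $N$ vanishing at $z$, a family of complex dimension $2n$; these furnish the fibres of $q$ and, for each $j$, the integral leaves of the $(0,1)$-distribution of $J_j$, so that every $J_j$ is integrable. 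A dimension count $\dim_\C\cF=2n+1=\dim_\C Z$ together with the normal-bundle condition makes $q$ a local biholomorphism, and since the fibre of the twistor space $\cZ(M,\qs)$ over $m$ is precisely $\{J_j:j\in X_m\}\cong X_m$, the correspondence space $\cF$ is identified (with its natural complex structure making $q$ holomorphic) with $\cZ(M,\qs)$, so that $q$ is the asserted $\rs$-equivariant local isomorphism $\cZ(M,\qs)\to Z$. The torsion-free connection is then obtained by transporting the holomorphic data along the fibres of $q$: the splitting of $T\cF$ induced by $q$ and by the fibres of $p$ defines, via a Penrose--Ward-type argument, a connection on $TM$ whose torsion is forced to vanish by the splitting type $\cO(1)\tp\C^{2n}$ (the relevant obstructions living in vanishing $H^1$-groups). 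I expect this integrability/torsion-vanishing to be the main obstacle, as it is where holomorphicity of $Z$ and the precise normal-bundle type are essential; in the exceptional case $n=1$, where $\qs$ encodes only a conformal structure, the same argument instead yields the curvature (self-duality) condition directly from the type $\cO(1)\tp\C^2$, recovering the nonlinear-graviton correspondence.
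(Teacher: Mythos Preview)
Your proposal is correct and follows essentially the same route as the paper, which does not give a self-contained proof but only a brief sketch: it invokes Kodaira deformation theory to produce a holomorphic $4n$-manifold $M\cx$ with an induced real structure whose fixed set is the real-analytic $4n$-manifold $M$, and then defers the quaternionic structure and torsion-free connection to the references~\cite{BE,HKLR,LeBrun,PP}. Your outline of the $U\tp E$ (i.e., $\cE\tp\cH$) decomposition, the quaternionic structure on $U$ coming from the fixed-point-free antipodal involution, and the correspondence-space identification of $Z$ with the twistor space is precisely the content of those references, so you have simply unpacked what the paper leaves implicit; your honest flagging of the torsion-free connection as the delicate step is accurate and matches where the cited literature does the real work.
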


For hyperk\"ahler and quaternionic K\"ahler manifolds, this result is due to
N.~Hitchin et al.~\cite{HKLR} and C.~LeBrun~\cite{LeBrun} respectively.
H. Pedersen and Y-S. Poon~\cite{PP} establish an extension to general
quaternionic manifolds, although they assume that $Z$ is foliated by real
twistor lines.  However, the Kodaira deformation space~\cite{KK} of $u$ is a
holomorphic $4n$-manifold $M\cx$ with a real structure $\rs_M$ whose fixed
points are real twistor lines. It follows that the real twistor lines form a
real-analytic submanifold $M$ of $M\cx$ with real dimension $4n$, which is
enough to establish the above result, following~\cite{BE,HKLR,LeBrun,PP}.

\subsection{The quaternionic Feix--Kaledin construction}\label{s:qfkc}

Let $S$ be a $2n$-manifold equipped with an integrable complex structure $J$
and a real-analytic c-projective structure $\Pi_c$. Our goal is to build the
twistor space $Z$ of a quaternionic manifold $M$ from a projective line bundle
$\hat Z= \Proj(\Lp^*\ds\Lm^*) \xrightarrow{p} \Sc$, where $\Sc$ is a
complexification of $S$. The fibres of $p$ over $\Sc$ are projective lines in
$\hat Z$ with trivial normal bundle $\cO\tp\C^{2n}$, but if we map them into a
suitable blow-down $Z$ of $\hat Z$, along ``zero'' and ``infinity'' sections
$\zers=\Proj(\Lp^*\ds 0)$ and $\infs=\Proj(0\ds\Lm^*)$, then their images in
$Z$ will have normal bundle $\cO(1)\tp\C^{2n}$.

In the model example, $\Sc$ is a product of projective spaces, and $\Lp$ and
$\Lm$ are dual to tautological line bundles over the factors. In general, it
will be an open subset of a projective bundle in two different ways, and the
line bundles $\Lp$ and $\Lm$ will be dual to fibrewise tautological line
bundles over these projective bundles. There is some freedom in the choice of
$\Lp$ and $\Lm$, which we parametrize by an auxiliary complex line bundle
$\cL\to S$ equipped with a real-analytic complex connection $\nabla$. We
proceed in several steps.

\step1{Complexification} First we introduce a complexification of $S$, i.e., a
holomorphic manifold $\Sc$ with $S$ as the fixed point set of an
antiholomorphic involution---see~\S\ref{s:complexification}. Since $S$ is a
complex manifold, it has an essentially canonical complexification by
embedding it as the diagonal in $S\hp\times S\am$, where $S\hp$ denotes $S$
with the holomorphic structure induced by $J$ and $S\am=\overline{S\hp}$ is
its conjugate (with the holomorphic structure induced by $-J$) so that
transposition is an antiholomorphic involution of $S\hp\times S\am$.  However,
the c-projective structure $\Pi_c$ on $S$ and connection $\nabla$ on $\cL$ may
only extend to a tubular neighbourhood of the diagonal in $S\hp\times S\am$,
so we let $\Sc$ be such a neighbourhood, with extensions $\Pi_c\cx$ and
$\nabla\cx$ of $\Pi_c$ and $\nabla$. Thus $\Sc$ has transverse $(0,1)$ and
$(1,0)$ foliations, which are the fibres of the projections $\pip\colon \Sc\to
S\hp$ and $\pim\colon \Sc\to S\am$. We let $\Lp$ and $\Lm$ be the pullbacks to
$\Sc$ of $\cL\tp\cO_S(1)\to S=S\hp$ and its conjugate over $S\am$, where
$\cO_S(1)^{\tp(n+1)}=\Wedge^n TS$. (In examples, it can happen that $\cL$ and
$\cO_S(1)$ are not globally defined on $S$, but their tensor product is.)

As explained in Proposition~\ref{p:indproj}, the algebraic bracket
$\abrack{\cdot,\cdot}^c$ restricts to $\abrack{\cdot,\cdot}^r$ on the leaves
of the $(0,1)$ and $(1,0)$ foliations and so restrictions of connections in
$\Pi_c\cx$ induce projective structures, and hence projective Cartan
connections $\cD$, along these
leaves---see~\S\ref{s:parabolic}--\S\ref{s:pc-affine}.  In fact, as explained
in \S\ref{s:cproj}, we couple these connections to the connection $\nabla\cx$
on $\cL\cx$ to obtain connections $\cD^\nabla$ on the bundles of $1$-jets of
$\Lm$ and $\Lp$ along the leaves of the $(0,1)$ and $(1,0)$ foliations
respectively.

\step2{Development} We now introduce the fundamental assumption that $\Pi_c$
and $\nabla$ have (curvature of) type $(1,1)$ with respect to the complex
structure $J$ on $S$---see \S\ref{s:cproj}.  By
Proposition~\ref{p:flatindproj}, the coupled projective Cartan connections
$\cD^\nabla$ are flat along the leaves of the $(0,1)$ and $(1,0)$ foliations.
Since these leaves are assumed to be contractible, hence simply connected, the
rank $n+1$ bundles $J^1\Lm$ and $J^1\Lp$ are trivialized by parallel sections
along the $(0,1)$ and $(1,0)$ foliations respectively.

\begin{defn}\label{d:affine} The bundle $\Aff(\Lm)\to S\hp$ of \emph{affine
sections} along the leaves of the $(0,1)$ foliation (the fibres of $\pip$)
is the bundle whose fibre at $x\in S\am$ is the space of sections $\ell$
of $\Lm$ over $\pip^{-1}(x)$ such that $j^1\ell$ is $\cD^\nabla$-parallel. The
bundle $\Aff(\Lp)\to S\am$ is defined similarly.  We further define
$\Vam:=\Aff(\Lm)^*\tp\Lp\to S\hp$ and $\Vhp:=\Aff(\Lp)^*\tp\Lm\to S\am$.
\end{defn}

The \emph{evaluation maps} $\pip^*\Aff(\Lm)\to \Lm$ and
$\pim^*\Aff(\Lp)\to\Lp$ over $\Sc$ send an affine section along a leaf to its
value at a point on that leaf. Dual to these are line subbundles
$\Lm^*\into\pip^*\Aff(\Lm)^*$ and $\Lp^*\into\pim^*\Aff(\Lp)^*$ over $\Sc$,
and hence fibrewise \emph{developing maps} from $\Sc$ to $\Proj(\Vam)$ over
$S\hp$, or from $\Sc$ to $\Proj(\Vhp)$ over $S\am$, sending a point of $\Sc$
to the fibre of $\Lm^*\tp\Lp$ in $\Vam$, or $\Lp^*\tp\Lm$ in $\Vhp$
respectively. The developing maps are local diffeomorphisms, so we may assume
(shrinking $\Sc$ if necessary) that they embed $\Sc$ as open subsets of
$\Proj(\Vam)$ and $\Proj(\Vhp)$ respectively. These induce embeddings of the
line bundles $\Lm^*\tp\Lp$ and $\Lp^*\tp\Lm$ into the tautological line
bundles $\cO_{\Vam}(-1)\to\Proj(\Vam)$ and $\cO_{\Vhp}(-1)\to\Proj(\Vhp)$.

\step3{Blow-down} To blow $\hat Z$ down along $\zers$ and $\infs$, we make
following definition.
\begin{defn}\label{phi} Let $\phm\colon\hat Z\setminus\infs\to\Vam$
and $\php\colon\hat Z\setminus\zers\to\Vhp$ be the restrictions, to $\hat
Z\setminus\infs\cong\Lm^*\tp\Lp$ and $\hat Z\setminus\zers\cong\Lp^*\tp\Lm$
respectively, of the blow-downs $\cO_{\Vam}(-1)\to\Vam$ and
$\cO_{\Vhp}(-1)\to\Vhp$ of zero sections of tautological line bundles.
\end{defn}
On the complement of $\zers\sqcup\infs$, the blow-down maps $\phm$ and $\php$
are biholomorphisms onto their image---see~\S\ref{s:pb-blow-up}. However,
since $\Sc$ typically embeds as a proper open subset of $\Proj(\Vam)$ and
$\Proj(\Vhp)$, the images of $\phm$ and $\php$ are cones in each fibre of
$\Vam$ and $\Vhp$ (see Remark~\ref{rem:blowup}), hence singular along the zero
sections. As a first attempt to fix this problem, we could replace these
images by $\Vam$ and $\Vhp$ themselves, and then glue these two vector bundles
together by identifying $\phm(z)$ with $\php(z)$ for $z\in\hat
Z\setminus(\zers\sqcup\infs)$.  Unfortunately the space obtained in this way
is typically not Hausdorff.  We repair this by gluing instead open subsets
$Z\am\sub\Vam$ and $Z\hp\sub\Vhp$ as follows.

\begin{defn}\label{d:Z} Let $U\am$ and $U\hp$ be tubular neighbourhoods of the
zero section in $\Vam$ and $\Vhp$ respectively, such that
\begin{equation}\label{eq:Zcond}
\phm^{-1}(U\am)\cap \php^{-1}(U\hp) = \emptyset
\end{equation}
and define
\begin{gather*}
Z\am=\im\phm\cup U\am,\qquad Z\hp=\im\php\cup U\hp, \qquad Z=
Z\am\mathop{\sqcup}_{\sim} Z\hp,
\end{gather*}
where $\phm(z)\sim \php(z)$ for all $z\in \hat Z\setminus(\zers\sqcup\infs)$.
This gluing induces a map
\begin{equation}
\phi\colon \hat Z=\Proj(\Lp^*\ds\Lm^*)\to Z,
\end{equation}
whose restriction to any leaf of the $(0,1)$ foliation is an isomorphism away
from $\zers$, and whose restriction to any leaf of the $(1,0)$ foliation is an
isomorphism away from $\infs$.
\end{defn}

\begin{remark} \label{Propofphi} Via the developing maps, $\phm$ and $\php$
are restrictions of the blow-down maps which contract $2n$-dimensional zero
sections of $\Lm^*\tp\Lp$ and $\Lp^*\tp\Lm$ to $n$-dimensional zero sections
of $\Vam$ and $\Vhp$. The multiplicative parts $(\Lm^*\tp\Lp)\mult$ and
$(\Lp^*\tp\Lm)\mult$ are both isomorphic to $\hat Z\setminus
(\zers\sqcup\infs)$, the composite of these isomorphisms being the inversion
map $\ell\mapsto 1/\ell$. Fibrewise, $Z\am$ and $Z\hp$ look like cones with
small balls added around the origin, and they are glued along the cones by
inversion. 
\end{remark}
The following diagram summarizes the construction of $Z$, where the hooked
arrows are open embeddings, and the other arrows are fibrations or blow-downs.
The left-right symmetry in the diagram corresponds to interchanging the
$(1,0)$ and $(0,1)$ directions.

\begin{diagram}[height=1.3em,width=2em,nohug,tight]
&  &        &    &          &   & Z \\
&  &        &    &  &\ruInto(6,6)&   &\luInto(6,6)\\
&  &        &    &          &  &\uTo_\phi \\
\\
&  &        &    &          &  &\llap{$\hat Z=\Proj(\Lp^*$}\ds\rlap{$\Lm^*)$}\\
&  &         &    &       &\ruInto&     &\luInto\\
Z\am& &\lTo^{\phm}& &\Lm^*\tp\Lp& &\dTo_{\ra p}& &\Lp^*\tp\Lm& &\rTo^{\php}& &Z\hp\\
\dIntoA&  &     &\ldInto&   &\rdTo&   &\ldTo&  &\rdInto& & &\dInto\\ 
&  &\cO_{\Vam}(-1)& &          &    &\Sc&     &  &   &\cO_{\Vhp}(-1) & \\
&\ldTo&       &\rdTo&       &\ldInto&&\rdInto& &\ldTo&       &\rdTo& \\
\Vam& &     &       &\Proj(\Vam)&      &   & &\Proj(\Vhp)& &  &  &\Vhp \\
&\rdTo&   &\ldTo_{\ra{\;\pip}}&    &      &   & &  &\rdTo_{\ra{\pim\,}}& &\ldTo\\
&     &S\hp&          &        &     &    &  &   &   &S\am &
\end{diagram}

\step4{Canonical twistor lines} We now reach the key point of the
construction.  Whereas any fibre $p^{-1}(x)$ of $p\colon \hat
Z=\Proj(\Lp^*\ds\Lm^*)\to \Sc$ has trivial normal bundle in $\hat Z$, its
image $\phi(p^{-1}(x))$, called a \emph{canonical twistor line}, has normal
bundle isomorphic to $\C^{2n}\tp\cO(1)$ in the blow-down $Z$. We thus obtain
our main result.

\begin{thm}\label{maintheorem} Let $(S,\Pi_c)$ be a c-projective manifold
of type $(1,1)$. Then for any complex line bundle $\cL$ with connection
$\nabla$ of type $(1,1)$, the holomorphic manifold $Z$ of
Definition~\textup{\ref{d:Z}} is the twistor space of a quaternionic manifold
$M$ with a quaternionic $S^1$ action having $S$ as a component of its fixed
points.  Furthermore, $S$ is a totally complex submanifold of $M$, with induced
c-projective structure $\Pi_c$, and a neighbourhood of $S$ in $M$ is
$S^1$-equivariantly diffeomorphic to a neighbourhood of the zero section in
$TS\tp(\Lm^*\tp\Lp)|_S$.
\end{thm}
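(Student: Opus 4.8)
The plan is to verify the hypotheses of the converse twistor theorem stated above, thereby recognizing $Z$ as a twistor space, and then to read off the $S^1$-action and the embedding of $S$. Since $\hat Z=\Proj(\Lp^*\ds\Lm^*)$ is a $\C\Proj^1$-bundle over the $2n$-dimensional complexification $\Sc$, and $\phi$ restricts to an isomorphism onto its image on each foliation leaf away from $\zers\sqcup\infs$, the glued space $Z$ is a holomorphic $(2n+1)$-manifold. It therefore remains to equip $Z$ with an antiholomorphic involution $\rs$, to exhibit real twistor lines on which $\rs$ is fixed-point free, and to compute their normal bundles.

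First I would build $\rs$ from the real structure on $\Sc$. The complexification of Step~1 carries an antiholomorphic involution $\sigma$ with fixed-point set $S$, interchanging the $(1,0)$ and $(0,1)$ foliations, hence the projections $\pip,\pim$ and the factors $S\hp,S\am$. As $\Lp$ and $\Lm$ are mutually conjugate, $\sigma$ lifts to an antiholomorphic bundle map of $\hat Z$ swapping $\zers$ with $\infs$; since flatness and parallel transport (Step~2) are preserved by $\sigma$, this lift intertwines $\Vam$ with $\Vhp$ and $\phm$ with $\php$. Because the gluing of Definition~\ref{d:Z} identifies $\phm(z)$ with $\php(z)$ by inversion, the lift descends to an antiholomorphic involution $\rs\colon Z\to Z$ interchanging $Z\am$ and $Z\hp$; one checks it respects condition~\eqref{eq:Zcond}, shrinking $U\am,U\hp$ symmetrically if needed. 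For $x\in S=\Sc^\sigma$ the fibre $p^{-1}(x)\cong\C\Proj^1$ is preserved by the lift of $\sigma$, so the canonical twistor line $\phi(p^{-1}(x))$ is $\rs$-invariant; as $\rs$ swaps the points $\zers\cap p^{-1}(x)$ and $\infs\cap p^{-1}(x)$, which $\phi$ sends to distinct points in $Z\am$ and $Z\hp$, it acts on this line as a fixed-point-free antiholomorphic involution, exactly as required.

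The normal-bundle computation is the crux, and the step I expect to be the main obstacle, as flagged in Step~4. In $\hat Z$ the fibre $p^{-1}(x)$ has trivial normal bundle $\cO\tp T_x\Sc\cong\cO\tp\C^{2n}$, and the task is to show $\phi$ twists it to $\cO(1)\tp\C^{2n}$. I would argue in the two charts: near $\zers$ one uses $\phm$, which develops the fibre of $\Lm^*\tp\Lp$ over $x$ into the tautological line $\cO_{\Vam}(-1)$ and then blows down to a complex line through the origin of the fibre $(\Vam)_{\pip(x)}$; near $\infs$ one uses $\php$ symmetrically, the two descriptions being glued over $\hat Z\setminus(\zers\sqcup\infs)$ by the inversion transition of Remark~\ref{Propofphi}. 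The flatness of $\cD^\nabla$ from Step~2 trivializes $J^1\Lm$ and $J^1\Lp$, making the developing maps local diffeomorphisms and identifying the $2n$ normal directions with $T_{\pip(x)}S\hp\ds T_{\pim(x)}S\am\cong T_x\Sc$; tracking the inversion transition on these trivializations scales each normal direction by one power of the fibre coordinate, producing the single factor of $\cO(1)$ and reproducing the model computation in $\C\Proj^{2n+1}$. The converse theorem then furnishes a quaternionic $4n$-manifold $M$ with $(Z,\rs)$ locally its twistor space.

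Finally I would extract the remaining structure. Fibrewise scaling on $\Proj(\Lp^*\ds\Lm^*)$ gives a holomorphic $\C^*$-action commuting with $p$ that descends through $\phi$ to $Z$ (the inversion gluing intertwining the two scalings), with $\zers,\infs$ as fixed loci; the $S^1\sub\C^*$ subgroup commutes suitably with $\rs$ and hence induces a quaternionic $S^1$-action on $M$. The canonical twistor lines over $S$ are precisely the $\rs$-invariant lines joining a point of $\phi(\zers)$ to its conjugate in $\phi(\infs)$, so $S$ is a fixed-point component of this action; by Theorem~\ref{thm:totallycomplex} it is a totally complex submanifold, and the developing maps are built from $\Pi_c$ so that the induced c-projective structure is $\Pi_c$. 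The equivariant identification of a neighbourhood of $S$ with one of the zero section in $TS\tp(\Lm^*\tp\Lp)|_S$ then follows from the normal-bundle isomorphism $NS\cong TS\tp_\C J^\perp$ of \S\ref{s:qm-tcs}, together with the identification $J^\perp|_S\cong(\Lm^*\tp\Lp)|_S$ determined by the line subbundles $\Lm^*,\Lp^*$ over $\Sc$.
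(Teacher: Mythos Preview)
Your overall architecture matches the paper's: build $\rs$ from the real structure on $\Sc$, compute the normal bundle of the canonical twistor lines, invoke the converse twistor theorem, and then read off the $S^1$ action and the properties of $S$. However, several steps are either incomplete or misattributed, and one of them hides the main content.

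First, your argument that $\rs$ is fixed-point free on a real canonical twistor line is not sufficient. Merely observing that $\rs$ interchanges the images of $\zers$ and $\infs$ only tells you that the induced antiholomorphic involution of $\C\Proj^1$ swaps $0$ and $\infty$; but $z\mapsto 1/\bar z$ does this and fixes the whole unit circle. The paper inserts an explicit minus sign in the lift, defining $\rs([\sigma,\tilde\sigma])=[\tilde\sigma\circ\theta^{-1},-\sigma\circ\theta]$, precisely to force the antipodal map on each fibre. You need an analogous choice, and it is not automatic from the data of $\sigma$ alone. You also do not address Hausdorffness of $Z$, which is where condition~\eqref{eq:Zcond} is actually used.

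Second, your normal-bundle sketch (``tracking the inversion transition scales each normal direction by one power of the fibre coordinate'') is heuristically correct but does not engage with the asymmetry of the two blow-downs: $\phm$ contracts along $(0,1)$-leaves while $\php$ contracts along $(1,0)$-leaves, so the $2n$ normal directions do not all behave identically in either chart. The paper handles this cleanly by exhibiting, for each $y=(x,\tx)$, explicit $(n+1)$-dimensional submanifolds $\hat Z\hp_{\tx}$ and $\hat Z\am_x$ of $Z$ whose tangent spaces along $u(y)$ span $TZ$ and meet in $Tu(y)$; this splits $\Nb=\Nb\hp\ds\Nb\am$, and then Observation~\ref{normaltoline} (normal bundle of a line in a projective space) identifies each summand with a pullback twisted by $\cO_{\Lp^*\ds\Lm^*}(1)$. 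This is the substantive computation, and your sketch does not yet supply it.

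Third, your appeal to Theorem~\ref{thm:totallycomplex} is in the wrong direction: that theorem \emph{assumes} $S$ is maximal totally complex and deduces a c-projective structure. To show $S$ is totally complex in $M$ one needs the identification of $\qs|_S$ with tracefree real endomorphisms of $\cH|_S=(\Lp\ds\Lm)|_S$, which lets one write $J$ and the elements of $J^\perp$ explicitly as in the paper's Proposition~\ref{p:cproj}. Likewise, ``the developing maps are built from $\Pi_c$'' does not by itself show the \emph{induced} c-projective structure equals $\Pi_c$; the paper closes this loop via Proposition~\ref{p:alpha}, showing that quaternionic connections restrict to connections on $\alpha$-submanifolds compatible with their canonical flat projective structure, and then recognizing the leaves of the $(1,0)$ and $(0,1)$ foliations as projective submanifolds of $\alpha$-submanifolds. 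This is a genuine ingredient that your outline omits.
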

\begin{proof}
\begin{bulletlist}
\item By Proposition \ref{p:manifold}, $Z$ is a holomorphic manifold with a
  holomorphic $S^1$ action.
\item By Corollary \ref{c:normal}, the canonical twistor lines form a family
  of projective lines in $Z$ with normal bundle isomorphic to
  $\C^{2n}\tp\cO(1)$.
\item By Proposition \ref{p:rs}, $\rs$ is an $S^1$-equivariant antiholomorphic
  involution of $Z$, the canonical twistor lines parametrized by $S\sub \Sc$
  are real, and $\rs$ has no fixed points.
\end{bulletlist}
\noindent Thus $Z$ is the twistor space of a quaternionic manifold $M$ with a
quaternionic $S^1$ action. By Proposition~\ref{p:cproj}, $S$ is a (maximal)
totally complex submanifold, with induced c-projective structure $\Pi_c$. The
$S^1$-equivariant diffeomorphism follows from Proposition~\ref{p:S1TNT}, and
hence $S$ is a component of the fixed point set of the $S^1$ action on $M$.
\end{proof}
\begin{defn} The construction of $Z$ and $M$ in Theorem~\ref{maintheorem} from
  $S$ and $\cL$ is called the \emph{quaternionic Feix--Kaledin construction}.
\end{defn}
It remains to understand when a quaternionic $4n$-manifold $(M,\qs)$ with a
quaternionic $S^1$ action arises in this way. For this note that at any fixed
point $x\in M$, the $S^1$ action induces a linear action on the $\symp(1)$
subalgebra $\qs_x\sub\gl(T_xM)$ preserving the bracket (or equivalently, the
inner product). If the action is trivial, we say $x$ is \emph{triholomorphic};
otherwise the action is generated by a positive multiple of $[J,\cdot]\in
\qs_x$ for some $J\in\qs_x$ with $J^2=-\id$ (this is a rotation fixing
$\spanu\{J\}\sub\qs_x$).

\begin{thm}\label{conversetheorem} Let $(M,\qs)$ be a quaternionic
$4n$-manifold with a quaternionic $S^1$ action whose fixed point set has a
connected component $S$ which is a submanifold of real dimension $2n$ with no
triholomorphic points. Then $S$ is totally complex, and a neighbourhood of $S$
in $M$ arises from the induced c-projective structure on $S$ via the
quaternionic Feix--Kaledin construction, for some complex line bundle $\cL$ on
$S$.
\end{thm}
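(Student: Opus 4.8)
The plan is to pass to the twistor space of $M$ and invert the construction of \S\ref{s:qfkc} step by step, reconstructing the data $(\Sc,\Vam,\Vhp,\Lp,\Lm)$ and finally a line bundle $\cL$ with connection $\nabla$ from the twistor geometry of $M$ near $S$. The first and easiest step is to show that $S$ is totally complex. At a point $x\in S$ the quaternionic $S^1$ action induces a rotation of the imaginary-quaternion algebra $\qs_x$, nontrivial by the no-triholomorphic-points hypothesis, whose axis is spanned by a complex structure $J\in\qs_x$ with $J^2=-\id$. Since $J$ is $S^1$-fixed it commutes with the action and hence preserves the weight-zero subspace $T_xS=(T_xM)^{S^1}$, so $J(T_xS)\sub T_xS$; on the other hand every nonzero $I\in J^\perp$ fails to commute with the action—conjugation rotates it within $J^\perp$—which forces $I(T_xS)\cap T_xS=0$. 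Thus $J$ makes $S$ totally complex, and as $\dim S=2n$ it is maximal, so Theorem~\ref{thm:totallycomplex} equips $S$ with its induced c-projective structure $\Pi_c$, of type $(1,1)$ by \S\ref{s:parabolic}.

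Next I would lift the action to the twistor space $Z$ of $M$, where it acts holomorphically, commutes with the real structure $\rs$, and locally complexifies to a $\C^*$ action. Over each $x\in S$ the rotation of the twistor sphere $\Proj(\qs_x)\cong\Proj^1$ has exactly two fixed points, namely $\pm J_x$; as $x$ ranges over $S$ these sweep out two complex $n$-dimensional submanifolds of $Z$, interchanged by $\rs$, which play the role of (the images of) the zero sections $\zers$ and $\infs$. Using the holomorphic $\C^*$ action I would linearize $Z$ along each fixed submanifold (by a Bochner-type holomorphic slice argument), identifying a neighbourhood of one with a neighbourhood of the zero section in a vector bundle over $S\hp$ and of the other with one over $S\am=\overline{S\hp}$. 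Matching the $\C^*$-weights, which must be opposite on the two sides so as to glue by the inversion of Remark~\ref{Propofphi}, recovers $\Vam\to S\hp$ and $\Vhp\to S\am$ together with their projectivizations and tautological line bundles, and hence the candidate line bundles $\Lp$ and $\Lm$.

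I would then recover the complexification $\Sc$ as the parameter space of $\C^*$-invariant twistor lines joining the two fixed submanifolds: these are exactly the canonical twistor lines of the construction, meeting each fixed locus once, with normal bundle $\cO(1)\tp\C^{2n}$. Incidence of such a line with the two fixed loci produces the two projections $\pip,\pim$ and, via the developing maps, realizes $\Sc$ as an open subset of both $\Proj(\Vam)$ and $\Proj(\Vhp)$. The reality structure $\rs$ exhibits $S$ as the real points of $\Sc$, so the bundles $\Lp,\Lm$ and the flat structures along the leaves descend to a genuine complex line bundle $\cL$ on $S$, determined by $\cL\tp\cO_S(1)=\Lp|_{S\hp}$, together with a complex connection $\nabla$ whose type-$(1,1)$ property follows from that of $\Pi_c$. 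Finally, feeding $(S,\Pi_c,\cL,\nabla)$ into the quaternionic Feix--Kaledin construction reproduces, by the very recipe of Definition~\ref{d:Z}, the blow-down $Z$ with its involution $\rs$ and $\C^*$ action; since a neighbourhood of $S$ in $M$ is the Kodaira deformation space of real twistor lines by the reconstruction theorem of \S\ref{motex}, $M$ is recovered near $S$, giving Theorem~\ref{maintheorem} in reverse.

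The main obstacle I expect is the middle stage: proving that the holomorphic $\C^*$ action linearizes along the fixed submanifolds with precisely the weights and bundle structure demanded by the construction, and that the $\C^*$-invariant twistor lines assemble into a complexification $\Sc$ carrying the two developing embeddings, so that $Z$ is genuinely recognized as the blow-down $\phi(\hat Z)$ of $\hat Z=\Proj(\Lp^*\ds\Lm^*)$. Equivalently, the delicate point is reconstructing $\cL$ together with its connection $\nabla$ and checking that all the recovered data is mutually compatible, so that the construction returns the given twistor space—and hence the given $M$—rather than merely a formally similar one.
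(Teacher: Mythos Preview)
Your proposal follows the same overall strategy as the paper---pass to the twistor space, locate the two $n$-dimensional fixed loci $S\hp,S\am\sub Z$ of the lifted $\C\mult$ action, reconstruct $\Sc$, $\Vam$, $\Vhp$, $\Lp$, $\Lm$, and finally $\cL$---but the technical route through the middle differs. The paper does not attempt a Bochner-type holomorphic linearization along the fixed loci. Instead it \emph{blows up} $Z$ along $S\hp\cup S\am$ to obtain $\hat Z$; the proper transforms of the twistor lines over $S$ then have \emph{trivial} normal bundle in $\hat Z$, and Kodaira deformation of such curves immediately exhibits a neighbourhood of $\phi^{-1}(Z|_S)$ as foliated by a $2n$-parameter family $\Sc$, so that $\hat Z$ is globally a $\C\Proj^1$-bundle over $\Sc$. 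The single-weight argument then shows that the lifted $S^1$ action fixes the exceptional divisors $\zers,\infs$ pointwise, hence $\hat Z\setminus(\zers\cup\infs)$ is a holomorphic principal $\C\mult$-bundle, whose associated line bundles embed into the pullbacks of the normal bundles $\Vam,\Vhp$ of $S\hp,S\am$ in $Z$. The line bundle $\cL$ is extracted by comparing the resulting trivial Cartan connections along the leaves with those coming from the induced c-projective structure (whose type-$(1,1)$ property is read off from Proposition~\ref{p:alpha}, not just from~\S\ref{s:parabolic}); this yields both a holomorphic and an antiholomorphic structure on $\cL$, hence a Chern connection with type-$(1,1)$ curvature.

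The blow-up route sidesteps exactly the obstacle you flag: you never need a linearization theorem, because the $\C\Proj^1$-bundle $\hat Z\to\Sc$ and the embeddings of $\Sc$ into $\Proj(\Vam)$ and $\Proj(\Vhp)$ arise directly from Kodaira theory for curves with trivial normal bundle and from the incidence with the exceptional divisors. Your approach via linearization and $\C\mult$-invariant twistor lines is morally equivalent and should work, but it front-loads the analytic difficulty (holomorphic Bochner linearization along a positive-dimensional fixed locus is not entirely trivial), whereas the paper's blow-up is an elementary birational move that makes the required structures appear almost tautologically.
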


\section{Background on projective geometries}\label{s:bg}

\subsection{Complexification of complex manifolds}\label{s:complexification}

Let $S$ be a real analytic manifold with complexification $(\Sc,\rs)$ where
$\rs$ is a real structure as in \S\ref{s:cpg} (cf.~also~e.g.~\cite{Biel}
or~\cite[p.66]{Manin}). If $\cE$ is a real-analytic vector bundle of rank $k$
over $S$, then in some connected neighbourhood of $S$ in $\Sc$, the transition
functions of $\cE$ have holomorphic extensions and hence we may construct a
holomorphic vector bundle $\cE\cx$ of complex rank $k$ over $\Sc$, with an
isomorphism $\rs^*\cE\cx\cong\overline{\cE\cx}$. The complexification $\cE\cx$
is not unique, but any two complexifications of $\cE$ are locally isomorphic
near $S$. Note that $T\Sc$ is a complexification of $TS$.

If $\cE$ is a complex vector bundle with real-analytic complex structure $I$,
then, locally near $S$, we may assume $I$ extends to $\cE\cx$, thus defining a
decomposition $\cE\cx=\cE\hp\ds\cE\am$ into the $\pm\iI$ eigenspaces of $I$
($\iI^2=-1$). In particular, if $\dim S=2n$ and $J$ is a real-analytic almost
complex structure on $S$, then the tangent bundle of $\Sc$ has a decomposition
\begin{equation*}
T\Sc=T\hp \Sc\ds T\am \Sc,
\end{equation*}
into $\pm\iI$ eigendistributions of $J$. These distributions are integrable if
and only if $J$ is an integrable complex structure, in which case $T\hp \Sc$
and $T\am \Sc$ define two transverse foliations, interchanged by $\rs$, called
the $(1,0)$ and $(0,1)$ foliations. We may (locally)
assume these foliations are regular, and hence define fibrations
\begin{diagram}[size=1.5em,nohug]
     &   & \Sc \\
    &\ldTo^{\pip}& & \rdTo^{\pim}\\
S\hp& &  & &S\am
\end{diagram}
from $\Sc$ to the leaf spaces $S\hp$ and $S\am$ of the $(0,1)$ and $(1,0)$
foliations respectively; the real structure $\rs$ then induces a
biholomorphism $\theta\colon \overline{S\am}\to S\hp$. We may further assume
that the projections $\pip$ and $\pim$ are jointly injective, defining an
embedding
\begin{equation*}
(\pip,\pim)\colon \Sc\into S\hp\times S\am.
\end{equation*}
Thus we may identify $\Sc$ with an open subset of $S\hp\times S\am$, where
$\rs$ is induced by $(x,\tx)\mapsto (\theta(\tx),\theta^{-1}(x))$,
so that $S$ is identified with the ``antidiagonal'' $\{(x,\theta^{-1}(x)):x\in
S\hp\}$, and $T\hp \Sc\cong TS\hp$, $T\am \Sc\cong TS\am$ are tangent to the
factors.

If $\cE\to S$ is a complex vector bundle with an integrable
$\overline{\partial}$-operator, then (locally near $S$) the latter defines a
trivialization of $\cE\hp$ along the leaves of $(0,1)$ foliation, and of
$\cE\am$ along the leaves of $(1,0)$ foliation. Thus we may write $\cE\hp$ and
$\cE\am$ as pullbacks by $\pip$ and $\pim$ of holomorphic vector bundles on
$S\hp$ and $S\am$ respectively.

In summary, a $2n$-manifold $S$ with an integrable complex structure $J$ has
an essentially canonical complexification: we may \emph{define} $S\hp$ to be
$S$ equipped with the holomorphic structure induced by $J$, and
$S\am=\overline{S\hp}$ (which has the holomorphic structure induced by $-J$)
so that the biholomorphism $\theta\colon \overline{S\am}\to S\hp$ is the
identity.

\begin{prop} If $S$ has an integrable complex structure, then $S\hp\times S\am$,
is a complexification of $S$, with $\rs(x,\tx)=(\tx,x)$, and any sufficiently
small complexification $\Sc$ of $S$ may be identified with a neighbourhood of
the \textup(anti\textup)diagonal in $S\hp\times S\am$.

A complex vector bundle $\cE\to S$ with an integrable
$\overline{\partial}$-operator defines holomorphic vector bundles $\cE\hp\to
S\hp$ and $\cE\am\to S\am$, where $\cE\am=\overline{\theta^*\cE\hp}$, and
\textup(omitting pullbacks by $\pip$ and $\pim$\textup) $\cE\hp\ds \cE\am\to
\Sc$ is a complexification of $\cE\to S$.
\end{prop}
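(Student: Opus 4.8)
The plan is to treat the two parts separately, reducing the first to the local uniqueness of complexifications already recorded in the Remark above, and the second to the Koszul--Malgrange integrability theorem together with the trivialization-along-leaves observation established in the preceding paragraphs. Throughout, $\theta\colon\overline{S\am}\to S\hp$ is the identity in this canonical setting.

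For the first part I would equip $S\hp\times S\am$ with its product complex structure, acting as $J$ on the factor $S\hp$ and as $-J$ on $S\am$, and then check directly that $\rs(x,\tx)=(\tx,x)$ is an antiholomorphic involution. It is visibly an involution, and a one-line computation on tangent vectors $(u,v)\mapsto(v,u)$ shows that $d\rs$ anticommutes with the product complex structure, precisely because interchanging the factors also interchanges $J$ and $-J$. The fixed-point set of $\rs$ is the diagonal $\{(x,x)\}$, which (since $\theta=\id$) is the antidiagonal identified with $S$ above, and it has real dimension $\dim_\C(S\hp\times S\am)$; hence $(S\hp\times S\am,\rs)$ is a complexification of $S$. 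The final clause---that any sufficiently small complexification $\Sc$ may be identified with a neighbourhood of the diagonal---is then immediate from local uniqueness: given any complexification $(\Sc,\rs')$, the Remark supplies a real holomorphic isomorphism from a $\rs'$-invariant neighbourhood of $S$ in $\Sc$ onto a $\rs$-invariant neighbourhood of the diagonal in $S\hp\times S\am$.

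For the second part I would first invoke the Koszul--Malgrange theorem: a complex vector bundle $\cE\to S$ with an integrable $\overline{\partial}$-operator carries a holomorphic structure over $S\hp=(S,J)$, which I call $\cE\hp$. The trivialization-along-leaves observation shows that, after shrinking $\Sc$, $\cE\hp$ is the pullback by $\pip$ of a holomorphic bundle on $S\hp$; I then \emph{define} $\cE\am:=\overline{\theta^*\cE\hp}$, the conjugate holomorphic bundle over $S\am=\overline{S\hp}$, pulled back by $\pim$. It remains to verify that $\cE\hp\ds\cE\am$ is a complexification of $\cE$: to produce the conjugation isomorphism $\rs^*(\cE\hp\ds\cE\am)\cong\overline{\cE\hp\ds\cE\am}$ and to identify its restriction to $S$ with the complexification of the underlying real bundle of $\cE$. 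The first follows from $\pip\circ\rs=\theta\circ\pim$ and its conjugate, which swap the two summands and match $\cE\hp$ with $\overline{\theta^*\cE\hp}=\cE\am$ by construction; the second follows because, along the diagonal, $\cE\hp|_S$ is the $+\iI$-eigenbundle of the extended complex structure $I$, canonically isomorphic to $(\cE,I)$, while $\cE\am|_S$ is the $-\iI$-eigenbundle $\overline{\cE}$, so that $\cE\hp|_S\ds\cE\am|_S\cong\cE\tp_\R\C$ with $\cE$ recovered as the real points.

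I expect the only real subtlety to lie in this last compatibility check: confirming that the real structure on $\cE\hp\ds\cE\am$ induced by the definition $\cE\am=\overline{\theta^*\cE\hp}$ is genuinely the conjugation whose fixed locus over $S$ is the original complex bundle $\cE$, rather than some twist of it. Everything else is either a formal consequence of local uniqueness or routine eigenspace bookkeeping; the antiholomorphy of $\rs$ and the dimension count present no difficulty.
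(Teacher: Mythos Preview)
Your proposal is correct and follows essentially the same approach as the paper. In fact the paper does not provide a separate formal proof of this proposition at all: it is stated as a summary (``In summary, \ldots'') of the discussion in the preceding paragraphs, which establish the $(1,0)/(0,1)$ foliations, the embedding $(\pip,\pim)\colon\Sc\into S\hp\times S\am$, and the trivialization of $\cE\hp$, $\cE\am$ along leaves; your write-up simply makes those steps explicit and invokes local uniqueness and Koszul--Malgrange by name.
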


Suppose that $D$ is a real-analytic affine connection on $S$, i.e., in
real-analytic coordinates, the connection $1$-forms of $D$ are given by
real-analytic functions. Then, using such coordinates, we can holomorphically
extend the connection $1$-forms near $S$ to obtain a holomorphic affine
connection $D\cx$ (i.e., it has holomorphic connection forms in holomorphic
coordinates) on some complexification $\Sc\sub S\hp\times S\am$.

Similarly if $\cE\to S$ admits a complex connection $\nabla$ which is
real-analytic (in a real-analytic trivialization of $\cE$) and compatible with
the holomorphic structure on $\cE$ (i.e.,
$\nabla^{0,1}=\overline{\partial}_{\cE}$), then locally we can complexify the
connection (by holomorphic extension of the connection forms) to obtain a
complexified connection $\nabla^c$ on $\cE\cx$.

\subsection{Blow ups and projective bundles}\label{s:pb-blow-up}

Recall that a map $p\colon\hat{\mfd}\to\mfd$ is called a \emph{blow-up} of a
  holomorphic manifold $\mfd$ along a submanifold $B$ with \emph{exceptional
    divisor} $\hat B\sub \hat\mfd$ (and $M$ is the \emph{blow-down} of
  $\hat\mfd$ along $p$) if
\begin{itemize}
\item $p|_{\hat B}\colon \hat B\to B$ is isomorphic to $\Proj(NB)\to
  B$, where $NB=T\mfd|_B/TB$, and
\item $p|_{\hat\mfd\setminus\hat B}\colon\hat\mfd\setminus\hat B\to
  \mfd\setminus B$ is a biholomorphism.
\end{itemize}
The prototypical example is the blow-up of a vector space $E$ at the origin,
given by the projection $\cO_E(-1)\into \Proj(E)\times E\to E$, where
$\cO_E(-1)$ is the tautological line subbundle of $\Proj(E)\times E$ over the
projectivization $\Proj(E)$ of $E$, whose fibre at $\ell\in\Proj(E)$ is
$\cO_E(-1)_\ell=\ell\leq E$. The exceptional divisor in this case is the zero
section of $\cO_E(-1)\to\Proj(E)$.

We make crucial use of the local geometry of blow-up and blow-down in our
constructions, so we summarize some key points here, as well as fixing
notation. First note that the inclusion of $\cO_E(-1)$ into $\Proj(E)\times E$
defines a section of the bundle $\Hom(\cO_E(-1),E)\to \Proj(E)$ with fibre
$\Hom(\cO_E(-1),E)_\ell=\Hom(\ell,E)$. Dually there is a canonical bundle map
$\Proj(E)\times E^*\to \cO_E(1):=\cO_E(-1)^*$ , sending $(\ell,\alpha)$ to
$\alpha|_\ell\in \ell^*$---hence a map from $E^*$ to the space of global
sections of $\cO_E(1)$. The image of this map is called the space
$\Aff(\cO_E(1))$ of \emph{affine sections} of $\cO_E(1)$ because of the
following standard fact.

\begin{obs}\label{obs:flatc} The bundle map $\Proj(E)\times E^*\to J^1\cO_E(1)$
induced by taking $1$-jets of affine sections is a bundle isomorphism. Hence
$J^1\cO_E(1)$ has a canonical flat \textup(indeed, trivial\textup) connection
whose parallel sections are $1$-jets of affine sections of $\cO_E(1)$, and
there is an exact sequence of bundles\textup:
\begin{equation}\label{euler}
0\to T^*\Proj(E)\tp\cO_E(1)\to \Proj(E)\times E^*\to \cO_E(1)\to 0.
\end{equation}
\end{obs}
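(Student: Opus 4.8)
The plan is to exhibit the bundle map
\[\Phi\colon\Proj(E)\times E^*\to J^1\cO_E(1),\qquad (\ell,\alpha)\mapsto j^1_\ell s_\alpha,\]
where $s_\alpha\in\Aff(\cO_E(1))$ is the affine section $\ell\mapsto\alpha|_\ell$, as an isomorphism, and then to read off \eqref{euler} by transporting the tautological jet sequence through $\Phi^{-1}$. Both sides are bundles of rank $\dim E$ over $\Proj(E)$ — indeed $J^1\cO_E(1)$ has rank $1+\dim\Proj(E)=\dim E$ — so once $\Phi$ is a bundle map covering the identity it suffices to prove it is injective on each fibre. First I would recall the canonical jet sequence of $\cO_E(1)$ over $\Proj(E)$,
\[0\to T^*\Proj(E)\tp\cO_E(1)\to J^1\cO_E(1)\to\cO_E(1)\to 0,\]
in which the surjection evaluates a $1$-jet at its basepoint and the injection records the intrinsic differential of a section at a zero; this reduces the Observation to showing $\Phi$ is a fibrewise isomorphism compatible with this sequence.

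Next I would verify fibrewise injectivity. Suppose $j^1_\ell s_\alpha=0$. Evaluating the jet gives $s_\alpha(\ell)=\alpha|_\ell=0$, so $\alpha$ annihilates $\ell$ and hence descends to $\bar\alpha\in(E/\ell)^*$. Regarding $s_\alpha$ as the fibrewise-linear function $(\ell,v)\mapsto\alpha(v)$ on the total space of $\cO_E(-1)$, and using the canonical identification $T_\ell\Proj(E)\cong\Hom(\ell,E/\ell)$, a short computation along a curve $\ell_t=\spanu\{v+tw\}$ identifies the intrinsic differential $d_\ell s_\alpha\in T^*_\ell\Proj(E)\tp\cO_E(1)_\ell$ (well defined precisely because $s_\alpha(\ell)=0$) with the homomorphism induced by $\bar\alpha$. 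Its vanishing forces $\bar\alpha=0$, whence $\alpha=0$. Thus $\Phi$ is injective, hence a fibrewise isomorphism between bundles of equal rank, and therefore a bundle isomorphism. The constant connection on $\Proj(E)\times E^*$ then transports through $\Phi$ to a flat (indeed trivial) connection on $J^1\cO_E(1)$ whose parallel sections are exactly the $1$-jets $j^1 s_\alpha$ of affine sections.

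Finally I would extract \eqref{euler} as the pullback of the jet sequence along $\Phi^{-1}$. The composite surjection $\Proj(E)\times E^*\to J^1\cO_E(1)\to\cO_E(1)$ is the evaluation $(\ell,\alpha)\mapsto\alpha|_\ell$ already described before the statement, whose kernel is the annihilator subbundle with fibre $\{\alpha:\alpha|_\ell=0\}=(E/\ell)^*$. The canonical identifications
\[T^*_\ell\Proj(E)\tp\cO_E(1)_\ell\cong\bigl(\ell\tp(E/\ell)^*\bigr)\tp\ell^*\cong(E/\ell)^*\]
then match this kernel with $T^*\Proj(E)\tp\cO_E(1)$, producing the short exact sequence \eqref{euler}.

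I expect the only genuine obstacle to be the fibrewise differential computation in the second paragraph: one must track the varying fibre of $\cO_E(1)$ and confirm that the first-order behaviour of the affine section at a zero agrees with the standard identification $T_\ell\Proj(E)\cong\Hom(\ell,E/\ell)$. Everything else is a formal consequence of the jet sequence together with the rank count.
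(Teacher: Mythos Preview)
Your proof is correct. The paper, however, does not prove this Observation at all: it is introduced as ``the following standard fact'' and simply stated without argument. Your approach---rank count, fibrewise injectivity via the intrinsic differential of $s_\alpha$ at a zero, then transport of the jet sequence through $\Phi^{-1}$ using $T_\ell\Proj(E)\cong\Hom(\ell,E/\ell)$---is exactly the natural way to verify it, and all the identifications you invoke are the standard ones.
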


To study more general blow-ups, we apply the above notions fibrewise to a
vector bundle $\cE \xrightarrow\pi \mfd$, which has a \emph{projectivization}
$\Proj(\cE)\to\mfd$ with $\Proj(\cE)_x=\Proj(\cE_x)$ for any $x\in\mfd$; we
further set $\cE\mult:= \cE\setminus\zers$, where $\zers$ is (the image of)
the zero section of $\cE$.  This an open subset of the \emph{fibrewise
  tautological bundle} $\cO_\cE(-1)\to \Proj(\cE)$ whose fibre over $\ell\in
\Proj(\cE)_x$ (for $x\in\mfd$) is $\ell\leq\cE_x$.  Then the projection from
$\cO_{\cE}(-1)$ to $\cE$ blows down the zero section of $\cO_\cE(-1)$ to the
zero section of $\cE$. There is however, an important detail we need to note.
\begin{remark} \label{tensorbylinebundle}
For any $1$-dimensional vector space $L$, $\Proj(E\tp L)$ is canonically
isomorphic to $\Proj(E)$, but $\cO_{E\tp L}(-1)= \cO_E(-1)\tp L$.  However, if
$\dim E = m+1$, then by taking the top exterior power of~\eqref{euler}, we
obtain that $\cO_E(m+1)\cong \Wedge^m T\Proj(E) \tp\Wedge^{m+1} E^*$.  (As
usual, for $k\in\Z$, $\cO_E(k):=\cO_E(1)^{\tp k}$ denotes the $k$th tensor
power.)
\end{remark}
Applying this remark fibrewise to $\cE\to\mfd$, we have that for any line
bundle $\cL\to\mfd$, $\Proj(\cE\tp\cL)$ is canonically isomorphic to
$\Proj(\cE)$, but $\cO_{\cE\tp\cL}(-1)\cong\cO_\cE(-1)\tp\pi^*\cL$.

We shall need one further variant of these constructions involving
\emph{projective completions} such as the projective line bundle
$\Proj(\cO\ds\cO_E(-1))\to\Proj(E)$.  This is a subbundle of
$\Proj(E)\times\Proj(\C\ds E)$, with fibre $\Proj(\C\ds\ell)\sub\Proj(\C\ds
E)$ over $\ell\in\Proj(E)$. Hence there is a blow-down map
$\Proj(\cO\ds\cO_E(-1))\to \Proj(\C\ds E)$ which is isomorphic to the
blow-down $\cO_E(-1)\to E$ on the complement of the section
$\Proj(\cO_E(-1))\cong\Proj(E)$.

\begin{obs}\label{normaltoline} In the blow-down $\Proj(\cO\ds\cO_E(-1))\to
\Proj(\C\ds E)$, the fibre $\Proj(\C\ds\ell)$ over $\ell\in\Proj(E)$ maps to
the corresponding projective line in $\Proj(\C\ds E)$, with normal bundle
$T\Proj(\C\ds E)|_{\Proj(\C\ds \ell)}/T\Proj(\C\ds\ell)\cong \cO_{\C\ds\ell}(1)
\tp E/\ell$.
\end{obs}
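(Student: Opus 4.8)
The plan is to reduce the claim to the standard computation of the normal bundle of a linearly embedded projective subspace, via the description of the tangent bundle of a projective space as a $\Hom$-bundle. Write $U:=\C\ds\ell$ and $V:=\C\ds E$, so that $U$ is a linear subspace of $V$ with quotient $V/U\cong E/\ell$, and the fibre over $\ell$ is $\Proj(U)=\Proj(\C\ds\ell)$.

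First I would verify that the blow-down restricts on this fibre to a biholomorphism onto the projective line $\Proj(U)\sub\Proj(V)$. Since $\Proj(\cO\ds\cO_E(-1))$ sits inside $\Proj(E)\times\Proj(\C\ds E)$ with the blow-down given by projection to the second factor, and this projection carries $\{\ell\}\times\Proj(\C\ds\ell)$ identically onto $\Proj(\C\ds\ell)\sub\Proj(\C\ds E)$, the restriction is indeed an isomorphism onto the line $\Proj(U)$. It therefore suffices to compute the normal bundle of the linear subspace $\Proj(U)$ in $\Proj(V)$.

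For the main step I would use the dual of the Euler sequence~\eqref{euler}, which identifies $T\Proj(V)\cong\Hom(\cO_V(-1),V/\cO_V(-1))$, with fibre $\Hom(m,V/m)$ at a line $m\leq V$ (i.e.\ at $m\in\Proj(V)$). For $m\in\Proj(U)$ the inclusion $U\into V$ gives a short exact sequence $0\to U/m\to V/m\to V/U\to 0$, and applying the exact functor $\Hom(m,-)=m^*\tp(-)$ yields
\begin{equation*}
0\to\Hom(m,U/m)\to\Hom(m,V/m)\to m^*\tp(V/U)\to 0,
\end{equation*}
that is, $0\to T_m\Proj(U)\to T_m\Proj(V)\to m^*\tp(E/\ell)\to 0$. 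Thus the normal space at $m$ is $m^*\tp(E/\ell)$; as $m$ varies over $\Proj(U)$ the line $m^*$ is the fibre of $\cO_{\C\ds\ell}(1)\to\Proj(U)$, while $E/\ell$ is constant, giving the asserted isomorphism $T\Proj(\C\ds E)|_{\Proj(\C\ds\ell)}/T\Proj(\C\ds\ell)\cong\cO_{\C\ds\ell}(1)\tp E/\ell$.

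The computation is routine; the only point needing care is the globalization, namely that the pointwise exact sequences assemble into an exact sequence of holomorphic vector bundles over $\Proj(U)$ rather than merely a fibrewise isomorphism. This is automatic because each map used---the tautological inclusion $\cO_U(-1)\into U$, the bundle surjection $V\to V/U$, and the resulting $\Hom$-sequence---is a morphism of holomorphic bundles over $\Proj(U)$, so the quotient is exactly $\cO_{\C\ds\ell}(1)\tp E/\ell$.
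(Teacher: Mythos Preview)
Your proposal is correct and follows essentially the same approach as the paper: the paper's justification is the single sentence ``the normal bundle is identified by applying~\eqref{euler} to $\Proj(\C\ds\ell)$ and $\Proj(\C\ds E)$,'' and you have simply spelled this out via the dual $\Hom$-formulation of the Euler sequence.
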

Here the normal bundle is identified by applying~\eqref{euler} to
$\Proj(\C\ds\ell)$ and $\Proj(\C\ds E)$.

\begin{remark}\label{rem:blowup} As this last example illustrates, blow-up
and blow-down are local to the submanifold or exceptional divisor. Hence
disconnected submanifolds and exceptional divisors can be blown up or down
componentwise. On the other hand, the blow-down of the inverse image of an
open subset $U\sub\Proj(E)$ in $\cO_E(-1)$ (for example) is the cone on $U$ in
$E$, which (for $U$ proper) is singular at the origin.
\end{remark}

\subsection{Cartan geometries}

Let $G$ be a real or complex Lie group, and $P$ a (closed) Lie subgroup, so
that $G/P$ is a (smooth or holomorphic) homogeneous space.  Let $M$ be a
(smooth or holomorphic) manifold with the same dimension as $G/P$.

\begin{defn} A \emph{Cartan connection of type} $(G,P)$ on $M$ is a
principal $G$-bundle $\cG\to M$, with a principal $G$-connection $\eta\colon
T\cG\to\g$ and a reduction $\iota\colon\cP\into\cG$ of structure group to
$P\leq G$ satisfying the following (open) \emph{Cartan condition}:
\begin{bulletlist}
\item the pullback $\iota^*\eta$ induces a bundle isomorphism of
$T\cP$ with $\cP\times\g$.
\end{bulletlist}
A manifold $M$ with a Cartan connection is called a \emph{Cartan geometry}.
Its \emph{Cartan bundle} is the bundle of homogeneous spaces
$\cC_M:=\cG/P\cong \cG\times_G (G/P)\cong \cG\times_P (G/P)$ over $M$.  The
principal connection $\eta$ on $\cG$ induces a connection on $\cC_M$, while
the reduction to $P$ equips $\cC_M$ with a \emph{tautological section}
$\tau\colon M\cong \cP/P\into \cG/P=\cC_M$.
\end{defn}
The model Cartan connection of type $(G,P)$ is the reduction $G\into
(G/P)\times G$ of principal bundles over $G/P$, with connection given by the
Maurer--Cartan form $\eta_G\colon TG\to \g$ of $G$. This is an isomorphism on
each tangent space, so the bundle map $T(G/P)\to G\times_P(\g/\p)$, induced by
the horizontal $1$-form $\eta_G+\p\in \Omega^1(G,\g/\p)^P$, is a bundle
isomorphism.

For a general Cartan geometry $M$ of type $(G,P)$, it follows that the
vertical bundle of $\cC_M$ is (isomorphic to) $\cG\times_G T(G/P)\cong
\cG\times_P(\g/\p)$, and the induced connection on $\cC_M$ is the $1$-form
$\eta_\cC\colon T\cC_M\to \cG\times_P(\g/\p)$ induced by the (horizontal,
$P$-equivariant) $1$-form $\eta+\p\colon T\cG\to\g/\p$.  Let
$\g_M=\cG\times_G\g\cong\cP\times_P\g$ and $\p_M=\cP\times_P \p$. Then the
covariant derivative $\eta_M:=\tau^*\eta_\cC\colon TM\to \cP\times_P(\g/\p)
\cong\g_M/\p_M$ of the tautological section $\tau$ is the $1$-form on $M$
induced by the pullback $\iota^*(\eta+\p)=\iota^*\eta+\p\colon T\cP\to
\g/\p$. The Cartan condition means (equivalently) that $\eta_M$ is a bundle
isomorphism.

The key idea behind Cartan connections is that if $\cD$ is flat, then in a
local trivialization $\cC_M$ by parallel sections over an open subset $U$, the
tautological section $\tau|_U\colon U\to \cC|_U\cong U\times G/P$ defines a
\emph{developing map} from $U$ to $G/P$: by the Cartan condition, these maps
are local diffeomorphisms, identifying $M$ locally with $G/P$. Since this
notion of development will be crucial to us, we establish it explicitly using
a linear representation of the Cartan connection described
in~\S\ref{s:pc-affine}.

\subsection{Projective parabolic geometries}\label{s:parabolic}

Smooth projective, c-projective and quaternionic manifolds are Cartan
geometries modelled on the projective spaces $\R P^n$, $\C P^n$ and $\HQ P^n$,
which are (real) homogeneous spaces for the projective general linear groups
$\PGL(n,\R)$, $\PGL(n,\C)$ and $\PGL(n,\HQ)$.  The corresponding holomorphic
Cartan geometries are modelled on complexifications of these varieties, namely
$\C P^n$, $\C P^n\times\C P^n$ and the grassmannian $\Gr_2(\C^{2(n+1)})$ of
two dimensional subspaces of $\C^{2(n+1)}$.

These Cartan geometries are examples (cf.~\cite{CaFr}) of \emph{parabolic
  geometries}~\cite{CS}: the model $G/P$ is a \emph{generalized flag variety},
with $G$ semisimple, and $\p$ a parabolic subalgebra of $\g$. This means that
the Killing perp $\p^\perp$ is a nilpotent ideal in $\p$---and in the above
examples, $\p^\perp$ is abelian. For such Cartan geometries, the isomorphism
$TM\cong \g_M/\p_M$ induces an isomorphism of
$T^*M\cong\p^\perp_M:=\cP\times_P \p^\perp$, the Lie bracket on $\g_M$ induces
a graded Lie bracket $\abrack{\cdot,\cdot}$ on $TM\ds (\p_M/\p_M^\perp)\ds
T^*M$, and so there is an algebraic bracket
\begin{equation*}
\abrack{\cdot,\cdot}\colon TM\times T^*M\to \p_M/\p_M^\perp\sub\gl(TM).
\end{equation*}
These geometries all admit an equivalence class $\Pi$ of torsion-free connections
$[D]$, where
\begin{equation*}
\tilde D\sim D \quad \Leftrightarrow\quad  \exists\, \gamma\in \Omega^1(M)\quad
\text{such that}\quad \tilde D_XY = D_XY+\abrack{X,\gamma}(Y)
\end{equation*}
for all vector fields $X,Y$. For projective, quaternionic and c-projective
manifolds, the bracket is defined explicitly in equations~\eqref{eq:proj},
\eqref{eq:quat} and~\eqref{eq:cproj} respectively.

If the curvature $R^D$ is viewed as a function of $D\in\Pi$, then its
derivative with respect to a $1$-form $\gamma$ is $\partial_\gamma
R^D=-\abrack{\id\wedge D\gamma}$, where $\partial_\gamma F(D)=\frac{\d}{\d t}
F(D+t\gamma)|_{t=0}$ and $\abrack{\id\wedge D\gamma}_{X,Y}
=\abrack{X,D_Y\gamma}-\abrack{Y,D_X\gamma}$.  One further feature of these
geometries is the existence of a ``normalized Ricci'' or ``Rho'' tensor
$\ri^D\in\Omega^1(M,T^*M)$ (a cotangent-valued $1$-form) such that
$\partial_\gamma\ri^D=-D\gamma$ and hence $\Wi:=\Ri^D-\abrack{\id\wedge\ri^D}$
is an invariant of the geometry (i.e., independent of $D\in \Pi$) called its
\emph{Weyl curvature}. It follows also that the \emph{Cotton--York curvature}
$C^D:=\d^D\ri^D$ satisfies $\partial_\gamma C^D=-\d^D
D\gamma+\abrack{\abrack{\id,\gamma}\wedge \ri^D}= -\abrack{\Wi,\gamma}$. In
particular, if the Weyl curvature vanishes, then the Cotton--York curvature is
an invariant.

Conversely, given an equivalence class $\Pi$ of torsion-free affine
connections on $M$, compatible with an appropriate reduction of the frame
bundle, the general theory of parabolic geometries~\cite{CS} constructs a
Cartan connection $\eta$ which is flat if and only if the Weyl and
Cotton--York curvatures vanish. We now discuss this for projective structures.

\subsection{Projective structures, affine sections and development}
\label{s:pc-affine}

On a projective space $\Proj(E)$, the trivialization
$J^1\cO_E(1)\cong\Proj(E)\times E^*$ of Observation~\ref{obs:flatc} may be
viewed as a linear representation of a flat Cartan connection. Its parallel
sections are $1$-jets of sections of $\cO_E(1)$ induced by linear forms on
$E$, which are affine functions in any affine chart. Globally, these are the
elements of the space $H^0(\Proj(E),\cO_E(1))$ of regular (or holomorphic)
sections.  Locally, these \emph{affine sections} of $\cO_E(1)$ are solutions
of a second order differential equation.  Projective structures generalize
this local description.

\begin{defn}\label{d:O(1)} Let $\mfd$ be a smooth or holomorphic $n$-manifold.
Then we denote by $\cO_\mfd(1)$ a (chosen) line bundle over $\mfd$ that
satisfies $\cO_\mfd(n+1):=\cO_\mfd(1)^{\tp(n+1)}\cong\Wedge^nT\mfd$.  We set
$\cO_\mfd(-1):=(\cO_\mfd(1))^*$.
\end{defn}

Let $\Pi_r$ be a projective structure on a manifold $\mfd$.  A choice of
$D\in\Pi_r$ gives a splitting of the $1$-jet sequence
\begin{equation*}
0\to T^*\mfd\tp\cO_\mfd(1)\to J^1\cO_\mfd(1)\to\cO_\mfd(1)\to 0,
\end{equation*}
i.e., an isomorphism $J^1\cO_\mfd(1)\cong\cO_\mfd(1)\ds
(T^*\mfd\tp\cO_\mfd(1))$ sending $j^1\ell$ to $(\ell,D\ell)$. For $n>1$, there
is also a normalized Ricci tensor $\ri^D$ associated to $D$, with
$\partial_\gamma \ri^D=-D\gamma$.

\begin{defn}\label{d:linproj} For any $D\in\Pi_r$, $\ell\in\cO_\mfd(1)$
and $\alpha\in T^*\mfd\tp\cO_\mfd(1)$, let $\vecD{\ell}{\alpha}=
j^1\ell-D\ell+\alpha$ (defined using a local extension of $\ell$) be the
element of $J^1\cO_\mfd(1)$ corresponding to $(\ell,\alpha)\in (\cO_\mfd(1)\ds
T^*\mfd\tp\cO_\mfd(1))$. Define a connection $\cD$ on $J^1\cO_\mfd(1)$ by
\begin{equation}\label{eq:linproj}
\cD_X\VecD{\ell}{\alpha}=\VecD{D_X\ell-\alpha(X)}{D_X\alpha+(\ri^D)_X\ell}.
\end{equation}
\end{defn}

\begin{prop}\label{p:affinesections}
The connection $\cD$ does not depend on the choice of $D\in\Pi_r$.
\end{prop}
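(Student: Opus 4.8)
The plan is to exploit the affine structure of the projective class: $\Pi_r$ is an affine space modelled on $\Omega^1(\mfd)$, with a second connection written $\tilde D = D + \abrack{\cdot,\gamma}^r$, so it is enough to show that the assignment $D\mapsto\cD$ of~\eqref{eq:linproj} has vanishing variation in every direction $\gamma$. First I would record how the three ingredients of~\eqref{eq:linproj} respond to $D_t := D + t\abrack{\cdot,\gamma}^r$. The induced connection on $\cO_\mfd(1)=(\Wedge^n T\mfd)^{1/(n+1)}$ changes by the trace $\tfrac1{n+1}\tr\abrack{X,\gamma}^r=\gamma(X)$, so $\partial_\gamma(D_X\ell)=\gamma(X)\ell$; dually, on $T^*\mfd\tp\cO_\mfd(1)$ one computes $\partial_\gamma(D_X\alpha)=-\gamma\tp\alpha(X)$ (the weight-$(n+1)$ contribution of the $T^*\mfd$ variation cancelling against the $\cO_\mfd(1)$ weight). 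Finally the Rho tensor varies by the stated rule $\partial_\gamma\ri^D=-D\gamma$. These three identities are the only inputs required.

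The essential bookkeeping is that the symbol $\vecD{\ell}{\alpha}$ itself depends on $D$, through the splitting $s_D\colon\cO_\mfd(1)\into J^1\cO_\mfd(1)$, $s_D(\ell)=j^1\ell - D\ell$, for which $\vecD{\ell}{\alpha}=s_D(\ell)+\alpha$. Since $s_{D_t}(\ell)=s_D(\ell)-t\,\gamma\tp\ell$, a \emph{fixed} element $\sigma\in J^1\cO_\mfd(1)$ represented as $\vecD{\ell}{\alpha}$ in the $D$-splitting is represented as $\vecD[D_t]{\ell}{\alpha+t\,\gamma\tp\ell}$ in the $D_t$-splitting; conversely $\vecD[D_t]{p}{q}=\vecD{p}{q-t\,\gamma\tp p}$. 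I would therefore evaluate $\cD^{D_t}_X\sigma$ by applying~\eqref{eq:linproj} for $D_t$ to the components $(\ell,\,\alpha+t\,\gamma\tp\ell)$, transport the answer back into the fixed $D$-splitting using this conversion, and differentiate at $t=0$.

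For the top entry, $\partial_\gamma$ of $(D_t)_X\ell-(\alpha+t\,\gamma\tp\ell)(X)$ is $\gamma(X)\ell-\gamma(X)\ell=0$, as it must be since the projection to $\cO_\mfd(1)$ is intrinsic. All the content is in the bottom entry $(D_t)_X(\alpha+t\,\gamma\tp\ell)+(\ri^{D_t})_X\ell$, corrected by the splitting-change term $-t\,\gamma\tp P_t$ with $P_t=(D_t)_X\ell-(\alpha+t\,\gamma\tp\ell)(X)$. Differentiating at $t=0$ and substituting the three variational identities, the $(D_X\gamma)\tp\ell$ produced by $D_X(\gamma\tp\ell)$ cancels the $-(D_X\gamma)\tp\ell$ produced by $\partial_\gamma\ri^D=-D\gamma$; the surviving terms assemble into $\gamma\tp\bigl(D_X\ell-\alpha(X)\bigr)$, which is exactly annihilated by the splitting-change contribution $-\gamma\tp P_0$, $P_0=D_X\ell-\alpha(X)$. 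Hence $\partial_\gamma\cD=0$ for every $\gamma$; repeating the argument with base point $D_{t_0}$ (in place of $D$) upgrades this to $t$-independence of $\cD^{D_t}$ for all $t$, giving constancy of $\cD$ along $\Pi_r$.

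The main obstacle I anticipate is organisational rather than deep: one must keep scrupulously separate the two roles played by $\gamma$, namely (i) the genuine variation of the connection coefficients and of $\ri^D$ inside~\eqref{eq:linproj}, and (ii) the re-coordinatisation of $J^1\cO_\mfd(1)$ forced by the moving splitting $s_D$. The whole cancellation turns on the normalisation $\partial_\gamma\ri^D=-D\gamma$: this is precisely what matches the $D_X\gamma$ coming from differentiating the splitting against the $D_X\gamma$ coming from differentiating $\ri^D$, and it is the sole place where the defining property of the Rho tensor enters. Equivalently, one may run a finite comparison of $D$ and $\tilde D$ directly; it proceeds verbatim provided one invokes the integrated law $\ri^{\tilde D}(X,Y)=\ri^D(X,Y)-(D_X\gamma)(Y)+\gamma(X)\gamma(Y)$, whose quadratic term the infinitesimal argument above conveniently sidesteps.
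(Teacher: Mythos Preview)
Your proof is correct and follows essentially the same infinitesimal-variation strategy as the paper: both arguments use exactly the three inputs $\partial_\gamma(D_X\ell)=\gamma(X)\ell$, $\partial_\gamma(D_X\alpha)=-\alpha(X)\gamma$, and $\partial_\gamma\ri^D=-D\gamma$, together with the splitting change $\partial_\gamma\vecD{\ell}{\alpha}=\vecD{0}{-\gamma\ell}$. The only difference is packaging: the paper phrases the conclusion as the commutation $\partial_\gamma\circ\cD=\cD\circ\partial_\gamma$ on $J^1\cO_\mfd(1)$, whereas you fix a section and track its components through the moving splitting, but the underlying cancellation is identical.
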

\begin{proof} Since $\partial_\gamma D\ell=\gamma\ell$, we have
\begin{equation*}
\partial_{\gamma} \VecD{\ell}{\alpha}
=\partial_\gamma(j^1\ell-D\ell+\alpha)=-\gamma\ell=\VecD{0}{-\gamma \ell}.
\end{equation*}
Then by the Leibniz rule
\begin{equation*}
\partial_{\gamma}\VecD{D_X\ell-\alpha(X)}{D_X\alpha+(\ri^D)_X\ell}
= -\VecD{0}{\gamma (D_X\ell-\alpha(X))} +\VecD{\gamma(X)\ell}
{\abrack{X,\gamma}^r\cdot\alpha - D_X\gamma\, \ell}.
\end{equation*}
Since $\alpha$ is $\cO_\mfd(1)$-valued $1$-form,
$\abrack{X,\gamma}^r\cdot\alpha=-\alpha(X)\gamma$, and hence
\begin{equation*}
\partial_{\gamma}\VecD{D_X\ell-\alpha(X)}{D_X\alpha+(\ri^D)_X\ell}
=\VecD{\gamma(X)\ell}{-\gamma D_X\ell-(D_X\gamma)\ell} =\cD_X\VecD{0}{-\gamma
  \ell}.
\end{equation*}
Thus $\partial_\gamma\circ\cD=\cD\circ\partial_\gamma$ on $J^1\cO_\mfd(1)$, which
completes the proof.
\end{proof}
\begin{defn}\label{defaffinesections} A section $\ell$ of $\cO_\mfd(1)$
over $\mfd$ is called an \emph{affine section} if $j^1\ell$ is a
$\cD$-parallel section of $J^1\cO_\mfd(1)$.  Note that if
$\vecD{\ell}{\alpha}$ is parallel for $\cD$ then $\alpha=D\ell$, i.e.,
$\vecD{\ell}{\alpha}=j^1\ell$, and hence $D^2\ell+\ri^D \ell=0$. Thus
$\ell\mapsto j^1\ell$ is a bijection between affine sections of $\cO_\mfd(1)$
and $\cD$-parallel sections of $J^1\cO_\mfd(1)$.
\end{defn}

\begin{prop}\label{p:flatproj}
$\cD$ is flat iff $\Pi_r$ has vanishing Weyl and Cotton--York curvatures.
\end{prop}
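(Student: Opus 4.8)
The plan is to compute the curvature $R^\cD_{X,Y}=\cD_X\cD_Y-\cD_Y\cD_X-\cD_{[X,Y]}$ of the connection~\eqref{eq:linproj} directly, organise it as a block endomorphism of $J^1\cO_\mfd(1)\cong\cO_\mfd(1)\ds(T^*\mfd\tp\cO_\mfd(1))$, and match the blocks against the Weyl curvature $\Wi$ and the Cotton--York curvature $C^D=\d^D\ri^D$. Since, by Proposition~\ref{p:affinesections}, $\cD$ is independent of $D\in\Pi_r$, so is $R^\cD$; this is consistent with the facts that $\Wi$ is an invariant of $\Pi_r$ and that $C^D$ becomes an invariant once $\Wi=0$, so that the statement ``$\Pi_r$ has vanishing Weyl and Cotton--York curvatures'' is unambiguous.

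First I would apply $R^\cD_{X,Y}$ to a section $\VecD{\ell}{\alpha}$ and expand using~\eqref{eq:linproj}, the Leibniz rule, and the torsion-freeness of $D$ (so $D_XY-D_YX=[X,Y]$). In the $\cO_\mfd(1)$-component all terms involving $\alpha$ cancel, and the Ricci identity leaves $\bigl(R^D_{X,Y}|_{\cO_\mfd(1)}+\ri^D(X,Y)-\ri^D(Y,X)\bigr)\ell$. The curvature of $D$ on $\cO_\mfd(1)$ is $\tfrac1{n+1}$ of the trace of $R^D$ on $T\mfd$, and the normalisation of $\ri^D$ (whose skew part equals $-\tfrac12$ of this density curvature) makes this component vanish identically. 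The $(T^*\mfd\tp\cO_\mfd(1))$-component collects into $C^D_{X,Y}\ell$, arising from the derivative terms $(D_X\ri^D)_Y-(D_Y\ri^D)_X=(\d^D\ri^D)_{X,Y}$, together with the $\alpha$-dependent contribution $R^D_{X,Y}\cdot\alpha-(\ri^D)_X\,\alpha(Y)+(\ri^D)_Y\,\alpha(X)$.

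The decisive step, which I expect to be the main obstacle, is to evaluate this $\alpha$-block. Substituting $R^D=\Wi+\abrack{\id\wedge\ri^D}$ and using the explicit projective bracket $\abrack{X,\gamma}^r(Y)=\gamma(X)Y+\gamma(Y)X$, together with the density curvature computed above, I would verify that the $\abrack{\id\wedge\ri^D}$-contribution cancels exactly against $-(\ri^D)_X\,\alpha(Y)+(\ri^D)_Y\,\alpha(X)$, leaving only the natural action of the trace-free Weyl endomorphism $\Wi_{X,Y}$ on $\alpha$. The bookkeeping is delicate because $R^D$ acts on $T^*\mfd\tp\cO_\mfd(1)$ through both the $\gl(T\mfd)$-action on $T^*\mfd$ and the density curvature on $\cO_\mfd(1)$; keeping track of these $\cO_\mfd(1)$-twists is precisely what makes the cancellation work.

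The outcome is the clean formula
\begin{equation*}
R^\cD_{X,Y}\VecD{\ell}{\alpha}=\VecD{0}{C^D_{X,Y}\,\ell+\Wi_{X,Y}\cdot\alpha}.
\end{equation*}
As $\ell$ and $\alpha$ range independently over all sections, $R^\cD$ vanishes if and only if the map $\ell\mapsto C^D_{X,Y}\ell$ vanishes (that is, $C^D=0$) and the action $\alpha\mapsto\Wi_{X,Y}\cdot\alpha$ vanishes (that is, $\Wi=0$). For the forward implication, flatness forces the $\Wi$-block to vanish, whence $C^D$ is an invariant and its block forces $C^D=0$; conversely, vanishing of $\Wi$ and $C^D$ annihilates $R^\cD$. (For $n=1$ the Weyl block is vacuous, and the statement reduces to flatness being equivalent to vanishing Cotton--York.) This proves the proposition.
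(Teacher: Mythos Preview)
Your proposal is correct and follows exactly the same route as the paper: a direct curvature computation yielding
\[
R^\cD_{X,Y}\VecD{\ell}{\alpha}=\VecD{0}{\Wi_{X,Y}\cdot\alpha+C^D_{X,Y}\ell},
\]
with the trace-freeness of $\Wi$ used to ensure the $\cO_\mfd(1)$-twist contributes nothing to the $\alpha$-block. The paper records only this formula and the trace-free remark, whereas you have spelled out the intermediate cancellations (the vanishing of the top slot via the skew part of $\ri^D$, and the elimination of the $\abrack{\id\wedge\ri^D}$-terms against $-(\ri^D)_X\alpha(Y)+(\ri^D)_Y\alpha(X)$); these details are correct and simply make explicit what the paper leaves as an exercise.
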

\begin{proof} Choosing $D\in\Pi_r$ and computing the curvature of $\cD$
from~\eqref{eq:linproj}, we obtain
\begin{equation*}
\Ri^{\cD}_{X,Y}\VecD{\ell}{\alpha}=\VecD{0}{W_{X,Y}\cdot\alpha+C^D_{X,Y}\ell}
\end{equation*}
for all vector fields $X,Y$, where we use the fact that $\tr (W_{X,Y})=0$.
\end{proof}
If $n>2$ and $W=0$, the differential Bianchi identity implies that
$C^D=d^D\ri^D=0$, and so $\cD$ is flat if and only if the projective Weyl
curvature vanishes. For $n=2$, $W$ is identically zero, and so $\cD$ is flat
if and only if the projective Cotton--York curvature (which is a projective
invariant, also known as the Liouville tensor) vanishes.

\begin{remark}\label{rem:coupledproj} If $\cL$ is a line bundle with
connection $\nabla$ on a projective manifold $\mfd$, then we can define a
coupled (tensor product) connection $\cD^\nabla$ on $J^1\cO_\mfd(1)\tp\cL$,
and the map $\ell\tp u\mapsto (j^1\ell)\tp u+\ell\tp\nabla u$ similarly
defines a bijection between distinguished ``affine sections'' of
$\cO_\mfd(1)\tp\cL$ and $\cD^\nabla$-parallel sections of
$J^1\cO_\mfd(1)\tp\cL$.
\end{remark}

\subsection{C-projective structures and their foliations}\label{s:cproj}
Let $(S,J)$ be a complex manifold of complex dimension $n>1$, and let $\Pi_c$
be a real-analytic c-projective structure on $S$ (i.e., there is a
real-analytic connection in $\Pi^c$). Then we can extend real-analytic
connections in $\Pi_c$ to a complexification $\Sc$ of $(S,J)$ as
in~\S\ref{s:complexification}. Since $\abrack{\cdot,\cdot}$ depends only on
$J$, it extends to any such complexification, the following is immediate.

\begin{obs}\label{obs:ccproj} There is a complexification $(\Sc,\Pi_c\cx)$
of $(S,J,\Pi_c)$ such that the holomorphic connections in $\Pi_c\cx$ are
holomorphic extensions of connections in $\Pi_c$. The c-projective Weyl and
Cotton--York curvatures of $\Pi_c\cx$ are holomorphic extensions of
corresponding c-projective Weyl and Cotton--York curvatures of $\Pi_c$.
\end{obs}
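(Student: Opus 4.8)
The plan is to reduce everything to the local uniqueness of holomorphic extension established in~\S\ref{s:complexification}. First I would fix a real-analytic connection $D\in\Pi_c$ and, exactly as in~\S\ref{s:complexification}, holomorphically extend its connection forms to a holomorphic connection $D\cx$ on a complexification $\Sc\sub S\hp\times S\am$ of $(S,J)$. Since (after shrinking $\Sc$) the complex structure $J$ determines the decomposition $T\Sc=T\hp\Sc\ds T\am\Sc$, and the algebraic bracket $\abrack{\cdot,\cdot}^c$ of~\eqref{eq:cproj} is assembled purely from $J$, this bracket extends to a holomorphic bundle map on $\Sc$. I would then simply \emph{define} $\Pi_c\cx:=[D\cx]_c$ to be the holomorphic c-projective class determined by $D\cx$ via the extended bracket.

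The first thing to verify is that this is independent of the choice of $D$. Given another real-analytic $\tilde D\in\Pi_c$, there is a real-analytic $1$-form $\gamma$ with $\tilde D_XY=D_XY+\abrack{X,\gamma}^c(Y)$; extending $\gamma$ holomorphically to $\gamma\cx$ and shrinking to a common domain, both $\tilde D\cx$ and $D\cx+\abrack{\cdot,\gamma\cx}^c$ are holomorphic connections that agree on the totally real submanifold $S$, so they coincide on $\Sc$ by the identity theorem. Hence $\tilde D\cx\sim_c D\cx$, so $\Pi_c\cx$ is well defined and contains the holomorphic extension of every connection in $\Pi_c$; local uniqueness of complexifications from~\S\ref{s:complexification} lets us regard all these extensions as living on one $\Sc$.

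For the curvature statement I would argue that holomorphic extension commutes with every operation from which the Weyl and Cotton--York curvatures are built. The curvature $\Ri^{D\cx}$ is the holomorphic extension of $\Ri^D$ (apply the identity theorem to the structure equations), and likewise $\ri^{D\cx}$ extends $\ri^D$, since the normalized Ricci tensor is pinned down by $\partial_\gamma\ri^D=-D\gamma$ together with an algebraic (trace) normalization preserved under extension. As the bracket $\abrack{\cdot,\cdot}^c$ and the exterior covariant derivative $\d^D$ also extend, the invariants $\Wi=\Ri^D-\abrack{\id\wedge\ri^D}$ and $C^D=\d^D\ri^D$ extend holomorphically to the corresponding invariants of $\Pi_c\cx$. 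There is no deep obstacle here—consistent with the claim being \emph{immediate}—so the only step I would treat with care is confirming that the normalization singling out $\ri^D$ is a pointwise, $J$-linear condition on the curvature, so that uniqueness of holomorphic extension indeed forces $\ri^{D\cx}$ to restrict to $\ri^D$ on $S$; the remaining work is the bookkeeping of passing all the shrunken domains to a single complexification.
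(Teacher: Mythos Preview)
Your proposal is correct and follows the paper's own reasoning: the paper presents this observation as \emph{immediate} from the complexification machinery of \S\ref{s:complexification} together with the fact that $\abrack{\cdot,\cdot}^c$ depends only on $J$ and hence extends holomorphically. You have simply spelled out the steps behind ``immediate''---extending a chosen $D\in\Pi_c$, checking that the resulting class $\Pi_c\cx$ is independent of that choice via the identity theorem, and observing that the curvature invariants extend because all ingredients (curvature, $\ri^D$, bracket, $\d^D$) do---which is precisely the intended argument.
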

\begin{prop}\label{p:indproj} A holomorphic c-projective structure $\Pi_c\cx$
on $\Sc\into S\hp \times S\am$ induces holomorphic projective structures on
the leaves of the $(1,0)$ and $(0,1)$ foliations.
\end{prop}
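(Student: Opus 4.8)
The plan is to produce the induced structure by restricting a single holomorphic connection $D\in\Pi_c\cx$ to the leaves of each foliation and then checking that the resulting projective class is insensitive to the choice of $D$. Recall from~\S\ref{s:complexification} that $J$ extends to a holomorphic section of $\End(T\Sc)$ with $J^2=-\id$, whose $\pm\iI$ eigenbundles $T\hp\Sc$ and $T\am\Sc$ are tangent to the $(1,0)$ and $(0,1)$ foliations (the fibres of $\pim$ and $\pip$). Fixing a leaf $L$ of, say, the $(1,0)$ foliation, so that $TL=T\hp\Sc|_L$, I would let $\pi\colon T\Sc|_L\to T\hp\Sc|_L$ be the projection along $T\am\Sc$ and, for holomorphic vector fields $X,Y$ tangent to $L$, define the induced connection by $D^L_XY:=\pi(D_XY)$.

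First I would verify that $D^L$ is a well-defined holomorphic torsion-free affine connection on $L$. It is intrinsic to $L$: if $Y'$ agrees with $Y$ along $L$, then $Y'-Y$ vanishes on $L$, and its covariant derivative in a direction tangent to $L$ vanishes at points of $L$ (in foliated coordinates both the value and the tangential derivatives of the difference vanish), so $\pi(D_XY')=\pi(D_XY)$. Torsion-freeness follows from $D^L_XY-D^L_YX=\pi([X,Y])=[X,Y]$, using that the foliation is integrable so that $[X,Y]$ is again tangent to $L$; holomorphicity is inherited from that of $D$ and the holomorphic splitting $T\Sc=T\hp\Sc\ds T\am\Sc$.

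The crux is the bracket computation. Let $\tilde D=D+\abrack{\cdot,\gamma}^c\in\Pi_c\cx$ for a holomorphic $1$-form $\gamma$, and take $X,Y$ tangent to $L$, so $JX=\iI X$ and $JY=\iI Y$. Since $\gamma$ is $\C$-linear, $\gamma(JX)=\iI\gamma(X)$ and $\gamma(JY)=\iI\gamma(Y)$, so from~\eqref{eq:cproj}
\begin{align*}
\abrack{X,\gamma}^c(Y)
&=\tfrac12\bigl(\gamma(X)Y+\gamma(Y)X-\gamma(JX)JY-\gamma(JY)JX\bigr)\\
&=\tfrac12\bigl(\gamma(X)Y+\gamma(Y)X-\iI^2\gamma(X)Y-\iI^2\gamma(Y)X\bigr)
=\gamma(X)Y+\gamma(Y)X,
\end{align*}
which is exactly $\abrack{X,\gamma|_L}^r(Y)$ by~\eqref{eq:proj}. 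This element already lies in $T\hp\Sc$, so $\pi$ fixes it, and hence $\tilde D^L_XY=D^L_XY+\abrack{X,\gamma|_L}^r(Y)$. Thus $\tilde D^L\sim_r D^L$, so the projective class $[D^L]_r$ does not depend on the choice of $D\in\Pi_c\cx$, and $\Pi_c\cx$ induces a well-defined holomorphic projective structure on $L$.

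Finally I would observe that the argument for a leaf of the $(0,1)$ foliation is identical: there $JX=-\iI X$ on tangent vectors, and since $(-\iI)^2=-1$ the same cancellation occurs, giving $\abrack{X,\gamma}^c(Y)=\abrack{X,\gamma|_L}^r(Y)$ again. The only real bookkeeping is in the second paragraph (that the projected connection is intrinsic and torsion-free); the actual content of the proposition is the one-line cancellation above, forced by $\iI^2=-1$, which collapses the c-projective bracket to the real projective bracket on tangent vectors of a leaf.
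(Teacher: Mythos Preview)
Your proof is correct and follows essentially the same route as the paper: define the induced connection on a leaf by restriction (and projection, though since connections in $\Pi_c\cx$ are complex this step is actually trivial), then check that the c-projective bracket $\abrack{X,\gamma}^c(Y)$ collapses to the real projective bracket $\abrack{X,\gamma}^r(Y)$ when $X,Y$ lie in a $\pm\iI$-eigenspace of $J$. The paper carries out the same cancellation, writing eigenvectors in the form $X\pm\iI JX$ rather than imposing $JX=\pm\iI X$ directly, but the content is identical; your version is just more explicit about well-definedness and torsion-freeness.
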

\begin{proof} Since $T\Sc=TS\hp \ds TS\am$, any connection in $\Pi_c\cx$
induces a connection on any leaf by restriction and projection. Now vectors
tangent to the $(1,0)$ and $(0,1)$ foliations are of the form $X+\iI JX$ and
$X-\iI JX$ respectively, and for any $1$-form $\gamma$ on $\Sc$,
\begin{align*}
\abrack{X+\iI JX,\gamma}^c(Y+\iI JY)&=\abrack{X+\iI JX,\gamma}^r(Y+\iI JY),\\
\abrack{X-\iI JX,\gamma}^c(Y-\iI JY)&=\abrack{X-\iI JX,\gamma}^r(Y-\iI JY).
\end{align*}
Hence c-projectively related connections on $\Sc$, after restriction to leaves
of the $(1,0)$ and $(0,1)$ foliations, are projectively related.
\end{proof}
\begin{remark}\label{rem:cgeod} Conversely the projective structures on the
leaves determine $\Pi_c$: for any $y\in\Sc$ and any affine connections $D$ and
$\tilde D$ on the leaves through $y$, there is a unique affine connection at
$y$ preserving the product structure and restricting to $D$ and $\tilde D$.
\end{remark}

Since the decomposition $T\Sc=TS\hp\ds TS\am$ is a holomorphic extension of
the type decomposition $TS\tp\C=T\hp S\ds T\am S$ on $S$, the decomposition
\begin{equation*}
\Wedge^2T^*\Sc= \Wedge^2T^*S\hp\ds(T^*S\hp\tp T^*S\am)\ds\Wedge^2 T^*S\am
\end{equation*}
is a holomorphic extension of the type decomposition $\Wedge^2T^*S\tp\C=
\Wedge^{2,0} \ds \Wedge^{1,1} \ds \Wedge^{0,2}$.
\begin{defn}\label{type}  We say that a c-projective structure $\Pi_c$ on
$(S,J)$ has \emph{type $(1,1)$} if its c-projective Weyl and Cotton--York
  curvatures have type $(1,1)$.
\end{defn}
\begin{prop}\label{p:flatindproj} A real-analytic c-projective structure of
type $(1,1)$ induces flat projective structures on the leaves of $(1,0)$ and
$(0,1)$ foliations in any complexification.
\end{prop}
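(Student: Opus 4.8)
The plan is to reduce flatness of the induced projective structures on the leaves to the vanishing of their projective Weyl and Cotton--York curvatures (via Proposition~\ref{p:flatproj}), and then to identify those curvatures with pure-type components of the c-projective Weyl and Cotton--York curvatures, which vanish under the type $(1,1)$ hypothesis. The machinery of the preceding subsections does almost all the work; the only genuine content is the identification in the middle.

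First I would pass to the complexification. By Observation~\ref{obs:ccproj} there is a complexification $(\Sc,\Pi_c\cx)$ whose c-projective Weyl and Cotton--York curvatures are the holomorphic extensions of those of $\Pi_c$; since the decomposition of $\Wedge^2 T^*\Sc$ is the holomorphic extension of the type decomposition on $S$, the hypothesis (Definition~\ref{type}) gives that $\Pi_c\cx$ itself has type $(1,1)$ Weyl and Cotton--York curvatures. Because any two complexifications are related by a real holomorphic isomorphism near $S$ and flatness is a local holomorphic condition, it suffices to prove the assertion for this $\Sc$, after which it holds in any complexification.

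Now fix a leaf $L$ tangent to $T\hp\Sc$ (a leaf of one of the two foliations), and let $D\in\Pi_c\cx$ induce, by restriction and projection along $T\am\Sc$, the projective connection $D^L$ on $L$ supplied by Proposition~\ref{p:indproj}. The key point is that every $D\in\Pi_c\cx$ preserves $J$, hence the splitting $T\Sc=T\hp\Sc\ds T\am\Sc$, so each summand is $D$-parallel and $R^D_{X,Y}$ is block diagonal; in particular $R^{D^L}_{X,Y}Z=R^D_{X,Y}Z$ for $X,Y,Z$ tangent to $L$. Combined with the identity $\abrack{\cdot,\cdot}^c=\abrack{\cdot,\cdot}^r$ on vectors tangent to a single leaf (from the proof of Proposition~\ref{p:indproj}), this shows that the leaf's projective Weyl and Cotton--York curvatures $W^{D^L},C^{D^L}$ are exactly the restrictions to leaf-tangent arguments of the c-projective $W^c,C^c$. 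But type $(1,1)$ means precisely that $W^c$ and $C^c$ annihilate bivectors tangent to a single leaf (their $(2,0)$ and $(0,2)$ parts vanish), so $W^{D^L}=0$ and $C^{D^L}=0$. By Proposition~\ref{p:flatproj} the linearized connection $\cD$ on $L$ is flat, i.e.\ $\Pi_c\cx$ restricts to a flat projective structure on $L$; here, for leaf dimension $n>2$ the Bianchi identity makes $C^{D^L}=0$ automatic once $W^{D^L}=0$, whereas for $n=2$ the vanishing of $C^{D^L}$ is what is needed. The other foliation, whose leaves are tangent to $T\am\Sc$, is handled identically.

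The main obstacle is the middle identification, which reduces to matching the normalized-Ricci (Rho) tensors. A priori, the leaf Weyl curvature $W^{D^L}$ and the restriction of $W^c$ to leaf-tangent arguments differ by a term $\abrack{\id\wedge\beta}^r$, where $\beta$ measures the discrepancy between the leaf's Rho tensor $\ri^{D^L}$ and the restriction of the c-projective Rho tensor $\ri^{D,c}$. Since both $W^{D^L}$ and the restriction of $W^c$ are trace-free for the leaf trace---the latter because $W^c$ is block diagonal and totally trace-free on $\Sc$---uniqueness of the trace-free (Weyl) part in the projective curvature decomposition forces $\beta=0$, so the two genuinely coincide. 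This is a routine but essential trace computation, and is the one place where the precise c-projective normalization of $\ri^{D,c}$ is used.
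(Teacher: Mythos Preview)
Your proof is correct and follows essentially the same line as the paper's: identify the leaf-pullback of the c-projective Weyl and Cotton--York tensors with the projective invariants of the leaf, then invoke the type $(1,1)$ hypothesis and Proposition~\ref{p:flatproj}. You add the Rho-tensor matching that the paper leaves implicit; one small caveat is that ``block diagonal and totally trace-free on $\Sc$'' does not by itself give leaf trace-freeness for the \emph{endomorphism} trace, but what your uniqueness argument actually needs---the Ricci-type contraction---does transfer (for $Y,Z\in T\hp$ the $T\am$ contributions to the $\Sc$-contraction vanish because $W^c_{\cdot,Y}Z$ lands in $T\hp$), and vanishing of that contraction is precisely what determines $\ri^D$.
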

\begin{proof} As the c-projective Weyl and Cotton--York have type $(1,1)$,
their holomorphic extensions have vanishing pullbacks, as $2$-forms, to any
leaf of the $(1,0)$ or $(0,1)$ foliation. However, due to the relation between
the algebraic brackets in the proof of Proposition~\ref{p:indproj}, these
pullbacks are the projective Weyl and Cotton--York curvatures of the leaves,
so the the induced projective structures are flat by
Proposition~\ref{p:flatproj}.
\end{proof}

We now discuss the line bundle $\cL\to S$ with connection $\nabla$; its
holomorphic extension $\nabla\cx$ to $\Sc$ provides line bundles with
connection along the $(1,0)$ and $(0,1)$ foliations which we use to twist the
projective Cartan connections along the leaves as in
Remark~\ref{rem:coupledproj}. To preserve flatness of the leafwise projective
structures, we require $\nabla\cx$ to be flat along leaves, i.e., $\nabla$ has
type $(1,1)$ curvature. In particular, $\nabla^{0,1}$ is a holomorphic
structure on $\cL$.

For a simply-connected projective manifold, a twist by a flat line bundle is
essentially trivial, corresponding to the ambiguity in
$\cO_E(1)\to\Proj(E)=\Proj(E\tp L)$ mentioned in
Remark~\ref{tensorbylinebundle}.  However, here we have two families of
projective leaves, and ambiguities in the choice of $\cO(1)$ along these
leaves which need not be compatible---and which we want to encode in the
$(1,1)$ curvature of $\nabla$. Thus, rather than simply taking
$\Lp=\pip^*\cO_S(1)$, where $\cO_S(m+1)=\Wedge^nTS\hp$, we first twist by
$\cL$ and take $\Lp=\pip^*(\cL\tp\cO_S(1))$. As mentioned in the introduction,
in this more general construction, it can happen that $\cL$ and $\cO_S(1)$ are
not globally defined on $S$, but $\Lp$ is. Indeed, as we shall see in
\S\ref{origFK}, in the Feix's original construction $\cL=\cO_S(-1)$ and $\Lp$
is trivial. Varying $\cL$ and $\nabla$ is thus a fundamental difference
between Feix's construction and ours, and leads to non-equivalent quaternionic
structures as we shall see e.g. in \S\ref{CG}.

\subsection{C-projective surfaces, projective curves and conformal
geometry}\label{s:csurf}

A complex structure $J$ on an oriented surface $S$ is the same data as a
conformal structure, and complex connections are conformal connections.
Torsion-free conformal (i.e., complex) connections on $S$ form an affine space
modelled on $1$-forms, i.e., a unique c-projective class on $S$.  However,
these data do not suffice to construct a Cartan connection modelled on the
flag variety $S^2\cong \C P^1$ for $\SO_0(3,1)\cong\mathrmsl{PSL}(2,\C)$, so
we need to modify the notion of a c-projective or conformal (M\"obius)
structure. Similarly a (real or holomorphic) projective curve $C$ has a unique
projective class of affine connections, but these do not determine the second
order \emph{Hill operator} on $\cO_C(1)$ whose kernel consists of the affine
sections.

Following~\cite{Cald}, we therefore require that $(S,J)$ is equipped with a
tracefree hessian operator (or \emph{M\"obius structure}), which is a second
order differential operator $\Delta\colon \Gamma L_S\to\Gamma \cS^2_0T^*S$, where
$L_S:=\cO_S(1)$ is a square root of $\Wedge^2 TS$, such that for some (hence
any) torsion-free connection $D$ there is a section $\ri^D_0$ of $\cS^2_0T^*S$
with
\begin{equation*}
\Delta(\ell)=\sym_0D^2\ell+\ri^D_0 \ell
\end{equation*}
for all sections $\ell$ of $L$. This allows us to construct a normalized Ricci
tensor $\ri^D$ with $\partial_{\gamma}\ri^D=-D\gamma$, which is the crucial
ingredient to build a Cartan connection.

Assuming $\Delta$ is real-analytic, it extends to a complexification $\Sc\into
S\hp\times S\am$, with
\begin{align*}
\cS^2_0T^*\Sc&=(T^*S\hp)^2 \ds (T^*S\am)^2,\\
(\ri^D_0)\cx&=(\ri^D_0)^{(2,0)}\ds (\ri^D_0)^{(0,2)}.
\end{align*}
We now define, as in \S\ref{s:pc-affine}--\S\ref{s:cproj}, a connection along
the leaves of the $(1,0)$ foliation by
\begin{equation*}
\cD\hp_Y\VecD{\ell}{\alpha}
=\VecD{D\hp_Y\ell-\alpha(Y)}{D\hp_Y\alpha+(\ri^D_0)^{(2,0)}_Y\ell},
\end{equation*}
where $\ell$ is a section of $\cO(1)$, $\alpha$ is an $\cO(1)$-valued
$(1,0)$-form and $Y$ is a $(1,0)$-vector field. As in
Proposition~\ref{p:affinesections}, $\cD\hp$ is independent of the choice of
$D$, and a similar construction applies along the leaves of the $(0,1)$
foliation.

\section{Quaternionic twistor theory}\label{s:qtt}

\subsection{Complexified quaternionic structures}

Let $Z$ be the twistor space of a quaternionic
manifold~\cite{Sal,HKLR,LeBrun,PP}, i.e., a holomorphic $(2n+1)$-manifold with
a real structure (antiholomorphic involution) $\rs\colon Z\to Z$, admitting a
\emph{twistor line} (a projective line which is holomorphically embedded in
$Z$ with normal bundle isomorphic to $\cO(1)\tp\C^{2n}$) which is real, i.e.,
$\rs$-invariant, and on which $\rs$ has no fixed points.

By Kodaira deformation theory~\cite{KK}, the moduli space of twistor lines in
$Z$ is a holomorphic $4n$-manifold $M\cx$, and there is an incidence relation
or correspondence
\begin{equation}
\begin{diagram}[height=1.3em,width=1.7em,nohug]
 &           &F_M\rlap{$\,:=\{(z,u)\in Z\times M\cx: z\in u\}$}\\
 &\ldTo_{\pi_Z}&  &\rdTo_{\pi_{M\cx}}\\
Z&           &  & &M\cx,
\end{diagram}
\end{equation}
where we identify $u\in M\cx$ with the corresponding twistor line
$u=\pi_Z(\pi_{M\cx}^{-1}(u))\sub Z$.  Thus $\pi_{M\cx}^{-1}(u)$ lifts $u\sub
Z$ to the incidence space $F_M$, which ``separates twistor lines'' (the fibres
are disjoint).  The normal bundles to twistor lines define a bundle $\Nb\to
F_M$ with fibre
\begin{equation*}
\Nb_{(z,u)}:=T_z Z/T_z u.
\end{equation*}
We then have~\cite{KK} that $T_uM\cx\cong H^0(u,\Nb|_u)$.

Locally over $M\cx$, we may decompose $\Nb$ (noncanonically) as
$\Nb=\pi_{M\cx}^*\cE\tp\pi_Z^*\cO_Z(1)$ where $\cE$ is a rank $2n$ bundle on
$M\cx$ and $\cO_Z(1)$ is a line bundle on $Z$ restricting to a dual
tautological bundle on each twistor line. Hence
\begin{equation*}
TM\cx\cong\cE\tp\cH,
\end{equation*}
where $\cH_u=H^0(u,\cO_Z(1)|_u)$, so that $F_M\to M\cx$ is canonically
isomorphic to $\Proj(\cH^*)\cong\Proj(\cH)$ (since $\cH$ has rank two), and we
have used that $\pi_{M\cx}^*\cE|_u=u\times\cE_u$. This tensor decomposition of
$TM\cx$ is the key structure carried by $M\cx$~\cite{PP,BE}, although
$\cE,\cH$ are only determined up to tensoring by mutually inverse line
bundles. The quaternionic connections on $M\cx$ are the tensor product
connections on $TM\cx= \cE\tp\cH$ which are torsion-free.

\begin{remark}\label{rem:EH}
We can restrict the freedom in $\cE$ and $\cH$ (locally) by requiring that
$\cO_{M\cx}(1):=\Wedge^2\cH=\Wedge^{2n}\cE$. This determines $\cH$ (and hence
$\cE$) up to a sign, so that $\cH\mult/\{\pm1\}$ is globally defined. Since
$\Wedge^{4n}TM\cx=(\Wedge^{2n}\cE)^2\tp(\Wedge^2\cH)^{2n}$, this convention
means equivalently that $\cO_{M\cx}(2n+2)=\Wedge^{4n}TM\cx$.  Taking top
exterior powers of
\[
0\to V\pi_{M\cx}\to \pi_Z^*TZ \to \Nb\to 0,
\]
using $V\pi_{M\cx}=\pi_{M\cx}^*(\Wedge^2\cH^*)\tp\pi_Z^*\cO_Z(1)$ and
$\Nb=\pi_{M\cx}^*\cE\tp\pi_Z^*\cO_Z(1)$, yields
\[
\pi_Z^*(\Wedge^{2n+1}TZ)
=\pi_{M\cx}^*(\Wedge^2\cH^*\tp\Wedge^{2n}\cE)\tp\pi_Z^*\cO_Z(2n+2).
\]
Thus a third equivalent formulation is that $\cO_Z(2n+2)=\Wedge^{2n+1}TZ$.
\end{remark}

\subsection{Null vectors, $\alpha$-submanifolds and projective structures}

We say a tangent vector to $M\cx$ is \emph{null} if it is decomposable in
$\cE\tp\cH$ and that a linear subspace of a tangent space is null if its
elements are. The fibre of $F_M$ over $z\in Z$ projects to a submanifold
$\alpha_z$ of $M\cx$ called an \emph{$\alpha$-submanifold}. Thus
$u\in\alpha_z$ iff $z\in u$, and then $T_u\alpha_z=\cE_u\tp\cO_Z(-1)_z$, so
that tangent spaces to $\alpha_z$ are null. Since the normal bundle to $u$ has
degree $1$, the twistor lines through $z\in u$ are determined by their tangent
space at $z$. Thus $\alpha_z$ is isomorphic to an open submanifold of
$\Proj(T_zZ)$, and has a canonical flat projective structure: any
$\Theta\in\Gr_{k+1}(T_z Z)$ parametrizes a $k$-dimensional projective (totally
geodesic) submanifold of $\alpha_z$ given by the twistor lines tangent to
$\Theta$ at $z$.

Any such null projective $k$-submanifold of $M\cx$ is determined by its
tangent space at a point $u\in M\cx$, which is a subspace of the form
$\theta\tp\ell \sub\cE_u\tp \cH_u=T_u M$ where $\theta$ is a $k$-dimensional
subspace of $\cE_u$ and $\ell$ is a $1$-dimensional subspace of $\cH_u$.  The
tangent lifts of null projective $k$-submanifolds thus foliate the subbundle
$\Gr_k(\cE)\times_{M\cx}\Proj(\cH)$ of null $k$-planes in
$\Gr_k(TM)\cap\Proj(\Wedge^k\cE\tp S^k\cH)\into \Proj(\Wedge^kTM)$ over the
grassmannian bundle $\Gr_{k+1}(TZ)$ as follows.
\begin{diagram}[height=1.5em,width=2.5em,nohug]
 &&\Gr_k(\cE)\times_{M\cx}\Proj(\cH)&\into&\Proj(\Wedge^k\cE\tp S^k\cH)
&\into&\Proj(\Wedge^k TM\cx)\\
&\ldTo&\dTo  & & \dTo & \ldTo(2,4)\\
\Gr_{k+1}(TZ)&   &\Proj(\cH) \\
\dTo&\ldTo_{\pi_Z}&  &\rdTo_{\pi_{M\cx}}\\
Z & & & & M\cx
\end{diagram}
For $k=1$, the geodesics of these projective structures are called \emph{null
geodesics} of $M\cx$. At the other extreme, when $k=2n-1$, $\Gr_{2n}(TZ)\cong
\Proj(T^*Z)$ and $\Gr_{2n-1}(\cE)\cong\Proj(\cE^*)$.

\begin{prop}\label{p:alpha} On any $\alpha$-submanifold $\alpha_z$ in a
complexified quaternionic manifold $M\cx$, any quaternionic connection $\qD$
induces an affine connection on $\alpha_z$ compatible with its canonical flat
projective structure.
\end{prop}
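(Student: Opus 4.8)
The plan is to fix a quaternionic connection and write it as a torsion-free tensor product connection $\qD=\nabla^\cE\tp\id+\id\tp\nabla^\cH$ on $TM\cx=\cE\tp\cH$, and then study its restriction to $\alpha_z$. The input is the description of the tangent bundle recorded just above the proposition: $T_u\alpha_z=\cE_u\tp\cO_Z(-1)_z$, where $\cO_Z(-1)_z\sub\cH_u$ is the fibre at the fixed point $z$ of the tautological bundle. The decisive feature is that this line depends only on $z$; writing $\ell:=\cO_Z(-1)_z$, we may regard $T\alpha_z=\cE|_{\alpha_z}\tp\ell$ as the restriction of $\cE$ twisted by a \emph{fixed} one-dimensional space, so that $\ell$ is a trivial line bundle over $\alpha_z$. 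I would then show that $\qD$ restricts to a torsion-free connection $D$ on $\alpha_z$ (equivalently, that $\alpha_z$ is totally geodesic), and that $D$ lies in the canonical flat projective class.

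That $\alpha_z$ is totally geodesic follows from torsion-freeness alone. Taking a local frame $s$ of $\ell$ and writing tangent fields as $Y=e\tp s$, $X=e'\tp s$ with $e,e'$ sections of $\cE$, we have $\qD_XY=(\nabla^\cE_Xe)\tp s+e\tp\nabla^\cH_Xs$; the first term is tangent to $\alpha_z$, so the second fundamental form is $\mathrm{II}(X,Y)=e\tp\overline{\nabla^\cH_Xs}$, where $\overline{\phantom{x}}$ denotes the projection $\cH\to\cH/\ell$. Torsion-freeness of $\qD$ forces $\mathrm{II}$ to be symmetric, so
\[
e\tp\overline{\nabla^\cH_{e'\tp s}s}=e'\tp\overline{\nabla^\cH_{e\tp s}s}
\quad\text{for all }e,e'\in\cE_u;
\]
since $\operatorname{rank}\cE=2n\ge2$, choosing $e,e'$ linearly independent forces both sides to vanish. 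Hence $\mathrm{II}=0$, the subbundle $\ell$ is $\nabla^\cH$-parallel along $\alpha_z$, and $D:=\qD|_{T\alpha_z}$ is a well-defined torsion-free connection, namely $\nabla^\cE$ coupled to the induced connection on the line $\ell$.

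It remains to identify the projective class of $D$ with the canonical flat one on $\alpha_z\sub\Proj(T_zZ)$. Since $\alpha_z$ is totally geodesic, the Gauss equation gives $R^D=(R^{\qD})|_{\alpha_z}$, so the projective Weyl and Cotton--York curvatures of $D$ are determined by the restriction of the quaternionic curvature to the null directions $\cE\tp\ell$. As $\ell$ is parallel, the $\cH$-curvature acts on it by a scalar and contributes only a trace (Rho-type) term, while the $\cE$-curvature yields the Weyl-type part of $D$. The crux is to show this Weyl-type part vanishes, so that $D$ is projectively flat with geodesics exactly the null geodesics in $\alpha_z$ --- that is, the projective lines of $\Proj(T_zZ)$; equivalently, by Proposition~\ref{p:flatproj}, the associated Cartan connection is flat. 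I would deduce this from the algebraic symmetry type of the quaternionic Weyl tensor, whose total trace-freeness makes its pullback to the null submanifold $\alpha_z$ projectively trivial, exactly as the c-projective curvature restricts to the projective curvature of a leaf in Proposition~\ref{p:indproj}. This curvature-type computation is the main obstacle; by contrast the total geodesy of $\alpha_z$ and the construction of $D$ are formal. Finally, a change of quaternionic connection by $\abrack{\,\cdot\,,\gamma}^q$ (see~\eqref{eq:quat}) restricts, on the null directions tangent to $\alpha_z$, to a projective change $\abrack{\,\cdot\,,\gamma}^r$ of $D$, so that every quaternionic connection lands in this same flat projective class.
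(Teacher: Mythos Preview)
Your argument for total geodesy of $\alpha_z$ is essentially the paper's: both exploit torsion-freeness of $\qD$ together with $\operatorname{rank}\cE\geq 2$ to force $\nabla^\cH$ to preserve the line $\ell\sub\cH$ along $\alpha_z$, so that $\qD$ restricts. The paper phrases this via the Lie bracket formula
\[
[X_1,X_2]=\qD^\cE_{X_1}e_2\tp h-\qD^\cE_{X_2}e_1\tp h
+e_2\tp\qD^\cH_{X_1}h-e_1\tp\qD^\cH_{X_2}h,
\]
but the content is the same as your second-fundamental-form computation.

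Where you diverge is in establishing projective compatibility, and here there is a genuine gap. You propose to use the Gauss equation and then argue that the projective Weyl tensor of the restricted connection vanishes by invoking ``the algebraic symmetry type of the quaternionic Weyl tensor, whose total trace-freeness makes its pullback to the null submanifold $\alpha_z$ projectively trivial''. This is precisely the nontrivial step, and you have not done it: the quaternionic Weyl curvature has components in both $\cE$ and $\cH$ directions, and extracting the projective Weyl tensor of the restriction requires an explicit decomposition and trace analysis that you only gesture at. The analogy with Proposition~\ref{p:indproj} is suggestive but not a proof, since that proposition concerns a different algebraic bracket on a different kind of leaf.

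The paper avoids curvature entirely. Instead it shows that every \emph{projective hyperplane} of $\alpha_z\sub\Proj(T_zZ)$ is totally geodesic for the restricted connection $D$. Such a hyperplane is $Y\cap\alpha_z$, where $Y\sub M\cx$ is the codimension-two family of twistor lines lying in a fixed hypersurface $\cY\sub Z$ through $z$; the conormal bundle of $Y$ has the form $\eps\tp\cH^*$ for a line subbundle $\eps\sub\cE^*$, and the \emph{same} torsion-freeness trick (now applied to $\cE$ rather than $\cH$) shows $\qD^\cE$ preserves $\ker\eps$ along $Y$. Since projective lines are intersections of projective hyperplanes, this forces the $D$-geodesics to be the projective lines, placing $D$ in the flat projective class directly. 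This argument is both shorter and more robust than the curvature route you outline; in particular it applies uniformly to every quaternionic connection, so your final remark about $\abrack{\,\cdot\,,\gamma}^q$ restricting to $\abrack{\,\cdot\,,\gamma}^r$ is not needed.
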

\begin{proof} Observe that $\pi_Z^{-1}(z)$ is the image of a section
of $\Proj(\cH)|_{\alpha_z}$ and if $h$ is a nonvanishing lift of this section
to $\cH|_{\alpha_z}$, then any vector (field) tangent to $\alpha_z$ have the
form $X=e\tp h$ for an element (or section) $e$ of $\cE|_{\alpha_z}$. Since
$\qD$ is torsion-free, and isomorphic to $\qD^\cE\tp\qD^\cH$, we have, for any
two null vector fields $X_1=e_1\tp h_1$ and $X_2=e_2\tp h_2$,
\begin{equation}\label{eq:qlb}
[X_1,X_2]= \qD^\cE_{X_1}e_2\tp h_2 - \qD^\cE_{X_2}e_1\tp h_1
+e_2\tp\qD^\cH_{X_1} h_2 - e_1\tp \qD^\cH_{X_2} h_1.
\end{equation}
If $h_1=h_2=h$, then $[X_1,X_2]$ is tangent to $\alpha_z$ for all $e_1,e_2$,
so $\qD^\cH_X$ preserves the span of $h$ for all $X$ tangent to $\alpha_z$.
Hence $\qD$ restricts to a (torsion-free) connection on $\alpha_z$.

It remains to show that $\qD$ preserves any projective hypersurface of
$\alpha_z$, i.e., the submanifold of twistor lines tangent to any hyperplane
in $T_zZ$. Such twistor lines generate a hypersurface $\cY$ in $Z$, and the
twistor lines in $\cY$ form a codimension two submanifold $Y$ of $M\cx$, with
conormal bundle $\eps\tp\cH^*$, where $\eps$ is a line subbundle of $\cE^*$
over $Y$. Now equation~\eqref{eq:qlb} implies that $\qD^\cE_X$ preserves
$\ker\eps$ along $Y$ for $X$ tangent to $Y$. Hence $Y\cap\alpha_z$ is totally
geodesic with respect to $\qD$.
\end{proof}

\section{Details and properties of the construction}\label{s:detail}

\subsection{The twistor space} \label{s:nb} We now fill in the remaining
details in the proof of Theorem~\ref{maintheorem}. First, we need to show that
$U\hp$ and $U\am$ can be chosen so that $Z$, constructed in
Definition~\ref{d:Z} is a twistor space with a holomorphic $S^1$ action.

\begin{prop}\label{p:manifold}  $Z$ is a complex manifold, with a holomorphic
vector field induced by scalar multiplication by $\lambda\in\C\mult$ in the
fibres of $\Vam$ and by $\lambda^{-1}$ in the fibres of $\Vhp$.
\end{prop}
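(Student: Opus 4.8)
The plan is to present $Z\am$ and $Z\hp$ as open holomorphic submanifolds of the holomorphic vector bundles $\Vam$ and $\Vhp$, glued along an open set by the biholomorphism $g:=\php\circ\phm^{-1}$, and then to verify that the result is Hausdorff; the holomorphic vector field will come from matching fibrewise Euler fields. For the local structure, write $W\am:=\phm(\hat Z\setminus(\zers\sqcup\infs))$. Since $\phm$ contracts $\zers$ to the zero section $\phm(\zers)$ and is injective elsewhere (\S\ref{s:pb-blow-up}), $W\am$ is identified with $\{v\in\Vam:[v]\in\Sc,\ v\neq0\}$, which is open in $\Vam$; as the tubular neighbourhood $U\am$ contains $\phm(\zers)$, we obtain $Z\am=\im\phm\cup U\am=W\am\cup U\am$ as an \emph{open} subset of $\Vam$, and likewise $Z\hp\sub\Vhp$. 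Because the blow-downs restrict to biholomorphisms off $\zers\sqcup\infs$, the transition map $g\colon W\am\to W\hp$ is a biholomorphism of open sets, so $Z$ carries a natural holomorphic atlas in which $Z\am$ and $Z\hp$ are open charts; second countability is inherited, and only the Hausdorff property remains.

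The main obstacle is Hausdorffness -- exactly the difficulty flagged before Definition~\ref{d:Z}, where gluing all of $\Vam$ to all of $\Vhp$ fails to separate the two zero sections, and which \eqref{eq:Zcond} is introduced to cure. I would apply the closed-graph criterion: $Z$ is Hausdorff iff the graph $R:=\{(\phm(z),\php(z)):z\in\hat Z\setminus(\zers\sqcup\infs)\}$ is closed in $Z\am\times Z\hp$. Set $N_0:=\phm^{-1}(U\am)$ and $N_\infty:=\php^{-1}(U\hp)$, open neighbourhoods of $\zers$ and $\infs$ in $\hat Z$, so that \eqref{eq:Zcond} reads precisely $N_0\cap N_\infty=\emptyset$. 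Suppose $z_k\in\hat Z\setminus(\zers\sqcup\infs)$ with $\phm(z_k)\to p\in Z\am$ and $\php(z_k)\to q\in Z\hp$. The crux is that $p\in U\am$ and $q\in U\hp$ cannot both hold: the first would force $\phm(z_k)\in U\am$, hence $z_k\in N_0$, for large $k$, and the second would force $z_k\in N_\infty$, contradicting $N_0\cap N_\infty=\emptyset$. So, say, $p\notin U\am$; then $p\in\im\phm\setminus U\am\sub W\am$ (as $\phm(\zers)\sub U\am$), whence $z_k=\phm^{-1}(\phm(z_k))\to\phm^{-1}(p)=:z_*$ by continuity and $q=\php(z_*)=g(p)$, so $(p,q)\in R$. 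Thus $R$ is closed and $Z$ is Hausdorff.

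For the vector field, take the Euler field $E\am$ on $\Vam$ (generating fibrewise multiplication by $\lambda$) and $-E\hp$ on $\Vhp$ (generating multiplication by $\lambda^{-1}$); these restrict to holomorphic fields on $Z\am$ and $Z\hp$, and I would check they agree on the overlap. By Remark~\ref{Propofphi}, $g$ is, on the tautological line directions, the inversion $\ell\mapsto1/\ell$ between $(\Lm^*\tp\Lp)\mult$ and $(\Lp^*\tp\Lm)\mult$. Since $\phm\colon\hat Z\setminus\infs\cong\Lm^*\tp\Lp\hookrightarrow\cO_{\Vam}(-1)\to\Vam$ intertwines fibre multiplication by $\lambda$ on $\Vam$ with multiplication by $\lambda$ on $\Lm^*\tp\Lp$, it carries $E\am$ to the generator $\ell\,\partial_\ell$; as $\ell\,\partial_\ell=-w\,\partial_w$ under $w=1/\ell$, this is $g$-related to $-E\hp$. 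The two fields therefore patch to a global holomorphic vector field on $Z$. (Choosing $U\am,U\hp$ invariant under the unit circle, compatibly with \eqref{eq:Zcond}, makes this the generator of a holomorphic $S^1$-action preserving $Z$, as needed later.)
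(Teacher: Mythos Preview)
Your proof is correct and follows essentially the same strategy as the paper's own proof: identify $Z\am$ and $Z\hp$ as open subsets of the holomorphic vector bundles $\Vam$ and $\Vhp$, and then use condition~\eqref{eq:Zcond} as the key step to establish Hausdorffness. The only difference is packaging: the paper argues by direct point-separation (if one of the two points lies in the overlap, move it to the other chart; otherwise both lie in $U\am$ and $U\hp$, which map to disjoint open sets by~\eqref{eq:Zcond}), whereas you phrase the same dichotomy via the closed-graph criterion with sequences. Your treatment of the holomorphic vector field---checking explicitly that $\ell\,\partial_\ell$ maps to $-w\,\partial_w$ under the inversion $w=1/\ell$ of Remark~\ref{Propofphi}---is more detailed than the paper's, which simply asserts that the gluing relation intertwines the $\lambda$ and $\lambda^{-1}$ actions.
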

\begin{proof} As $Z$ is obtained by gluing open subsets of the
manifolds $Z\am\sub\Vam$ and $Z\am\sub\Vhp$ by a relation intertwining the
action of $\lambda$ and $\lambda^{-1}$, it remains to show that $Z$ is
Hausdorff. So suppose $z\in Z\hp$ and $\tz\in Z\am$ with $[z]\neq [\tz]$ in
$Z$. If $z\in\im\php$ or $\tz\in\im\phm$ then we can replace it by the
corresponding point in $Z\am$ or $Z\hp$, which is distinct, hence separated,
from $\tz$ or $z$.  However, for $z\in U\hp$ and $\tz\in U\am$, the images of
$U\hp$ and $U\am$ are open, and separate $[z]$ and $[\tz]$ by
assumption~\eqref{eq:Zcond}.
\end{proof}

The construction of $Z$ from $\hat Z=\Proj(\Lp^*\ds\Lm^*)$ yields the diagram
\begin{equation}\label{diag1}
\begin{diagram}[height=1.5em,width=2.5em,nohug]
 &           &\hat Z\rlap{$=\Proj(\Lp^*\ds\Lm^*)$}\\
 &\ldTo(2,4)^{\phi}&\dTo  &\rdTo(2,4)^{p}\\
 &           &Z\times \Sc\\
 &\ldTo_{\pi_Z}&  &\rdTo_{\pi_{\Sc}}\\
Z&           &  & &\Sc.
\end{diagram}
\end{equation}
The induced (vertical) map $(\phi,p)\colon\hat Z\to Z\times \Sc$ is injective
and its image is the incidence relation $F_S\sub F_M$ for canonical twistor
lines: for $y\in\Sc$, we write $u(y):=\phi(p^{-1}(y))$ for the canonical
twistor line parametrized by $y$.

\begin{defn} The \emph{normal bundle} $\Nb$ on $\hat Z\cong F_S$ is the
bundle $\phi^*TZ /Vp$, where $Vp$ denotes the vertical bundle of $p\colon\hat
Z\to\Sc$, with fibre $\Nb_{(z,y)}=T_z Z/ T_z (u(y))$.
\end{defn}

\begin{prop}\label{p:normal} $\Nb=\Nb\hp\ds\Nb\am$, where
\begin{align*}
\Nb\hp&\cong p^*(TS\hp\tp\Lp^*)\tp\cO_{\Lp^*\ds\Lm^*}(1),\\
\Nb\am&\cong p^*(TS\am\tp\Lm^*)\tp\cO_{\Lp^*\ds\Lm^*}(1).
\end{align*}
\end{prop}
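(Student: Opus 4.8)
The plan is to compute $\Nb$ separately in the two charts $Z\am\sub\Vam$ and $Z\hp\sub\Vhp$ and then glue. Write $\pi_{\Vam}\colon\Vam\to S\hp$ and $\pi_{\Vhp}\colon\Vhp\to S\am$ for the bundle projections. First I would pull back, by $\phm$, the tangent sequence $0\to V\Vam\to T\Vam\to\pi_{\Vam}^*TS\hp\to 0$ of the vector bundle $\Vam$. Since $Z\am$ is open in $\Vam$ and $\pi_{\Vam}\circ\phm=\pip\circ p$, this gives over $\hat Z\setminus\infs$ an exact sequence with sub $\phm^*V\Vam$ and quotient $p^*\pip^*TS\hp$. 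As each fibre $p^{-1}(y)$ maps into a single fibre of $\Vam$, the tangent line $d\phi(Vp)$ of the canonical twistor lines lies in $\phm^*V\Vam$, so quotienting by it yields
\begin{equation*}
0\to\phm^*V\Vam/d\phi(Vp)\to\Nb\to p^*\pip^*TS\hp\to 0
\end{equation*}
over $\hat Z\setminus\infs$, and symmetrically a sequence with sub $\php^*V\Vhp/d\phi(Vp)$ and quotient $p^*\pim^*TS\am$ over $\hat Z\setminus\zers$.

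Next I would identify each sub-bundle with the chart-restriction of the asserted $\Nb\am$ or $\Nb\hp$. Since the vertical bundle of a vector bundle is its own pullback, $\phm^*V\Vam=p^*\pip^*\Vam$, while $d\phi(Vp)$ is the tautological line $p^*(\Lm^*\tp\Lp)$ embedded by the developing map; hence the sub-bundle is $p^*\bigl(\pip^*\Vam/(\Lm^*\tp\Lp)\bigr)$. Writing $\Vam=\Aff(\Lm)^*\tp\Lp$ and using that $\Lm^*\into\pip^*\Aff(\Lm)^*$ is dual to evaluation of affine sections, the $1$-jet sequence identifies the kernel of evaluation with $T^*S\am\tp\Lm$, so $\pip^*\Vam/(\Lm^*\tp\Lp)\cong TS\am\tp\Lm^*\tp\Lp$. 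Finally, the canonical map $p^*\Lp\to\cO_{\Lp^*\ds\Lm^*}(1)$ is an isomorphism away from $\infs$, so on $\hat Z\setminus\infs$ the sub-bundle is exactly $\Nb\am=p^*(TS\am\tp\Lm^*)\tp\cO_{\Lp^*\ds\Lm^*}(1)$; the $\hp$-chart sub-bundle is $\Nb\hp$ by the symmetric argument on $\hat Z\setminus\zers$.

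It then remains to assemble these two local descriptions into a global splitting. The crucial observation is that, on the overlap $\hat Z\setminus(\zers\sqcup\infs)$, the $\am$-chart sub-bundle $\Nb\am$ (vertical directions of $\Vam$) projects \emph{isomorphically} onto the $\hp$-chart quotient $p^*\pim^*TS\am$ (horizontal directions of $\Vhp$), and likewise $\Nb\hp$ onto $p^*\pip^*TS\hp$: this is exactly the content of the developing-map derivative, which matches the fibrewise transverse part $\Vam_x/\ell$ with $TS\am\tp\ell$ along the $(0,1)$ leaves. Thus $\Nb\am$ and $\Nb\hp$ are complementary sub-bundles on the overlap, and since each extends holomorphically across the section it omits, the inclusions combine to a global isomorphism $\Nb\hp\ds\Nb\am\xrightarrow{\sim}\Nb$. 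A quick confirmation comes from the holomorphic $S^1$-action of Proposition~\ref{p:manifold}: every canonical twistor line is invariant with fixed points on $\zers$ and $\infs$, and the induced weights---$(+1,0)$ on $\Nb\am$ and $(0,-1)$ on $\Nb\hp$---separate the summands and show each is $\cO(1)\tp\C^n$.

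The main obstacle is the bookkeeping in the second paragraph: tracking the line-bundle twists through the developing maps and the affine-section bundles $\Aff(\Lm)$, $\Aff(\Lp)$, and in particular checking that the tautological trivializations $\cO_{\Lp^*\ds\Lm^*}(1)\cong p^*\Lp$ off $\infs$ and $\cong p^*\Lm$ off $\zers$ really do convert the two chart-local sub-bundles into restrictions of the single global bundles $\Nb\am,\Nb\hp$. The gluing is then a compatibility check---most cleanly done via the equivariance above---verifying that the inversion identifying the charts exchanges vertical and horizontal normal directions, so that the two filtrations split rather than merely extend.
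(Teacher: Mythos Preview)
Your approach is correct but takes a different route from the paper's. You work chart-by-chart: in each of $Z\am\sub\Vam$ and $Z\hp\sub\Vhp$ you pull back the tangent sequence of the ambient vector bundle, identify the resulting sub and quotient of $\Nb$ via the $1$-jet sequence for $\Aff(\Lm)$ (resp.\ $\Aff(\Lp)$), and then argue that the two filtrations are opposite on the overlap, so they split $\Nb$ globally. The paper instead exhibits, for each $y=(x,\tx)\in\Sc$, explicit $(n{+}1)$-dimensional submanifolds $\hat Z\hp_{\tx}$ and $\hat Z\am_x$ of $Z$ (each obtained by gluing a fibre of $\Vhp$ or $\Vam$ to the $\phm$- or $\php$-image of a leaf) whose tangent spaces along $u(y)$ span $TZ|_{u(y)}$ and meet in $Tu(y)$; the splitting $\Nb=\Nb\hp\ds\Nb\am$ is then immediate, and each summand is identified via Observation~\ref{normaltoline} applied fibrewise to the projective completion $\Proj(\Lp^*\ds\Lm^*)$ over the leaf.

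The two arguments are really the same content seen from different ends: the paper's submanifolds $\hat Z\hp_{\tx}$, $\hat Z\am_x$ are precisely the geometric incarnation of your gluing observation that the $\phm$-vertical direction matches the $\php$-horizontal direction under the developing map. The paper's packaging is cleaner because it replaces your overlap compatibility check by the single assertion that these submanifolds are smooth (Remark~\ref{Propofphi}), and then reads off the normal bundle from a model calculation. Your approach, on the other hand, makes the line-bundle bookkeeping completely explicit and yields the $S^1$-weight decomposition along each twistor line as a byproduct, which is a useful sanity check. The one place where your write-up is thinner than it should be is exactly the step you flag: the claim that $\Nb\am$ projects isomorphically onto $p^*\pim^*TS\am$ on the overlap is the derivative of the developing map, and writing this out carefully (or, equivalently, invoking the smoothness of $\hat Z\am_x$) is what actually closes the argument.
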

\begin{proof} For any $y=(x,\tx)\in \Sc$, we define $(n+1)$-dimensional
submanifolds of $Z$ by
\begin{align*}
\hat Z\hp_{\tx}&= Z\hp_{\tx}\cup\phm((\pim\circ p)^{-1}(\tx)),\\
\hat Z\am_{x} &= Z\am_{x}\cup\php((\pip\circ p)^{-1}(x)).
\end{align*}
By Remark~\ref{Propofphi}, these are well defined smooth submanifolds of $Z$,
and for any $y=(x,\tx)\in \Sc$, we have
\begin{equation*}
T\hat Z\hp_{\tx}|_{u(y)} + T\hat Z\am_{x}|_{u(y)}=TZ|_{u(y)}\quad\text{and}\quad
T\hat Z\hp_{\tx}|_{u(y)}\cap T\hat Z\am_{x}|_{u(y)}=Tu(y)
\end{equation*}
Hence $\Nb=\Nb\hp\ds\Nb\am$, where
\begin{equation*}
\Nb\hp_{(z,y)}=T_z\hat Z\hp_{\tx}/T_z u(y)\quad\text{and}\quad
\Nb\am_{(z,y)}=T_z\hat Z\am_{x}/T_z u(y).
\end{equation*}
The (canonical) identification of $\Nb\hp$ with
$p^*(TS\hp\tp\Lp^*)\tp\cO_{\Lp^*\ds\Lm^*}(1)$ follows easily from
Observation~\ref{normaltoline}, as $\hat Z\hp_{\tx}$ is a blow-down along the
zero section of the projective bundle $p^{-1}(\pim^{-1}(\tx))\sub
\Proj(\Lp^*\ds\Lm^*)$ over $\pim^{-1}(\tx)\rInto^{\pip} S\hp$, and
$\Vhp_{\tilde x}/u(y)\cong T_x S\hp\tp(\Lp^*\tp\Lm)_y$. A similar argument
identifies $\Nb\am$.
\end{proof}

We next construct the real structure on $Z$. By definition the holomorphic
line bundles $\overline{\Lm}\to \overline{S\am}$ and $\Lp\to S\hp$ are
isomorphic, and we denote the biholomorphisms $\overline{S\am}\to S\hp$ and
$\overline{\Lm}\to \Lp$ by $\theta$. The real structure $\rs$ on $\Sc\into
S\hp\times S\am$ sends $(x,\tx)$ to $(\theta(\tx),\theta^{-1}(x))$.  We lift
this real structure to $\hat Z=\Proj(\Lp^*\ds\Lm^*)$ by defining
$\rs([\sigma,\tilde\sigma])=[\tilde\sigma\circ\theta^{-1},-\sigma\circ\theta]$,
where the minus sign ensures $\rs$ has no fixed points. Since
$\rs(\zers)=\infs$, $\rs$ maps $\Lp\tp\Lm^*$ to $\Lp^*\tp\Lm$.  Since the
leafwise connections $\cD^\nabla$ are (by construction) related by $\theta$,
$\rs$ induces an antiholomorphic isomorphisms, also denoted $\rs$, between
$\Vam$ and $\Vhp$, with $\rs\circ\php=\phm\circ\rs$ and
$\rs\circ\phm=\php\circ\rs$. We further observe (again by construction) that
for any $v\in\Vam$,
\begin{equation*}
\rs(\lambda\cdot v)=\rs(\lambda v) = \overline\lambda\rs(v)
=\overline\lambda^{-1}\cdot \rs(v),
\end{equation*}
where $\cdot$ denotes the $\C\mult$ action. Thus $\rs$ intertwines the
$S^1$ actions on $\Vam$ and $\Vhp$.
\begin{prop} \label{p:rs} We may choose $U\am$ and $U\hp$ so that $Z\am$
and $Z\hp$ are $S^1$-invariant with $\rs(Z\am)=Z\hp$. Then $\rs$ induces an
$S^1$-invariant antiholomorphic involution of $Z$ with no fixed points on any
real \textup($\rs$-invariant\textup) canonical twistor line.
\end{prop}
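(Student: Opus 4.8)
The plan is to choose the tubular neighbourhoods $U\am$ and $U\hp$ so that the lift $\rs$ automatically respects the gluing defining $Z$, and then to verify that $\rs$ descends to an antiholomorphic involution of $Z$ which is the antipodal map on each real fibre. First I would fix $U\am$ to be any $S^1$-invariant tubular neighbourhood of the zero section of $\Vam$ (for example the disc bundle of an $S^1$-invariant hermitian metric, which exists because $S^1$ acts by scalars) and then simply \emph{define} $U\hp:=\rs(U\am)$. Since $\rs\colon\Vam\to\Vhp$ is an antiholomorphic isomorphism carrying the zero section to the zero section, with $\rs(\lambda\cdot v)=\overline\lambda^{-1}\cdot\rs(v)$ --- so that $\rs(\lambda\cdot v)=\lambda\cdot\rs(v)$ for $\lambda\in S^1$ --- the set $U\hp$ is again an $S^1$-invariant tubular neighbourhood. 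As $\im\phm$ and $\im\php$ are $\C\mult$-invariant (being fibrewise cones) and $\rs\circ\phm=\php\circ\rs$ gives $\rs(\im\phm)=\im\php$, this renders $Z\am$ and $Z\hp$ both $S^1$-invariant with $\rs(Z\am)=Z\hp$. To secure~\eqref{eq:Zcond} I would shrink $U\am$: the blow-down description of $\phm$ shows $\phm^{-1}(U\am)$ is a neighbourhood of $\zers$ contracting to $\zers$ as $U\am$ shrinks, while $\rs\circ\phm=\php\circ\rs$ yields $\php^{-1}(U\hp)=\rs\bigl(\phm^{-1}(U\am)\bigr)$, a neighbourhood of $\infs=\rs(\zers)$; since $\zers$ and $\infs$ are disjoint in $\hat Z$, a small enough $U\am$ makes the two preimages disjoint.

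Next I would check that the antiholomorphic maps $Z\am\to Z\hp$ and $Z\hp\to Z\am$ induced by $\rs$ glue to a map $\rs\colon Z\to Z$. For $z\in\hat Z\setminus(\zers\sqcup\infs)$ the gluing identifies $\phm(z)$ with $\php(z)$, and the relations $\rs\circ\phm=\php\circ\rs$, $\rs\circ\php=\phm\circ\rs$ send these to $\php(\rs z)$ and $\phm(\rs z)$, which are again glued because $\rs z$ also avoids $\zers\sqcup\infs$; hence $\rs$ is well defined on $Z$. It is antiholomorphic chartwise and $S^1$-equivariant by the displayed relation, and to see $\rs^2=\id$ I would combine $\rs^2=\id$ on $\hat Z$ with the two intertwining relations to get $\rs^2\circ\phm=\phm$ on $\hat Z\setminus(\zers\sqcup\infs)$, where $\phm$ is a biholomorphism onto an open subset of $Z$; thus $\rs^2=\id$ there, and so on all of $Z$ by the identity theorem.

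It then remains to rule out fixed points on a real canonical twistor line $u(y)$, $y\in S$. The fibre $p^{-1}(y)\cong\C\Proj^1$ is $\rs$-invariant, $\phi$ maps it bijectively onto $u(y)$, and $\phi\circ\rs=\rs\circ\phi$ by the gluing argument, so it suffices to show $\rs$ is fixed-point-free on $p^{-1}(y)$. In homogeneous fibre coordinates the formula $\rs([\sigma,\tilde\sigma])=[\tilde\sigma\circ\theta^{-1},-\sigma\circ\theta]$ becomes, after the identification furnished by $\theta$ at the fixed point $y$, the antipodal map $[\sigma:\tilde\sigma]\mapsto[\overline{\tilde\sigma}:-\overline\sigma]$; a fixed point would force $\sigma=c\,\overline{\tilde\sigma}$ and $\tilde\sigma=-c\,\overline\sigma$ for some $c$, whence $(1+|c|^2)\tilde\sigma=0$ and $\sigma=\tilde\sigma=0$, which is impossible.

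The gluing, equivariance and involution checks are formal consequences of the intertwining relations recorded before the statement; the step demanding real care is the \emph{simultaneous} arrangement of $S^1$-invariance, $\rs(Z\am)=Z\hp$ and~\eqref{eq:Zcond}, which I resolve by tying $U\hp$ to $U\am$ through $\rs$ so that all three reduce to a single shrinking. The conceptual crux is the fixed-point-free claim: it depends entirely on the minus sign in the lift of $\rs$, which turns the fibrewise real structure into the antipodal involution of $\C\Proj^1$ rather than a conjugation fixing a circle.
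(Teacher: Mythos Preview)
Your proof is correct and follows essentially the same approach as the paper's: choose $U\am$ small and $S^1$-invariant, set $U\hp:=\rs(U\am)$, deduce~\eqref{eq:Zcond} from the identity $\php^{-1}(U\hp)=\rs(\phm^{-1}(U\am))$, and lift the fixed-point-free claim from $\hat Z$ to $Z$ via $\rs\circ\phi=\phi\circ\rs$. You simply spell out in detail several points the paper leaves implicit---the well-definedness of $\rs$ on the glued space, the verification that $\rs^2=\id$ on $Z$, and the explicit antipodal computation on the fibre---but the underlying strategy is identical.
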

\begin{proof} Take $U\am$ to be a sufficiently small $S^1$-invariant
neighbourhood of the zero section in $\Vam$ so that
$\phm^{-1}(U\am)\cap\rs(\phm^{-1}(U\am))=\emptyset$. Now set
$U\hp=\rs(U\am)$.  The real canonical twistor lines are the images of the
fibres of $p$ over the real submanifold $S\sub\Sc$. Since
$\rs\circ\phi=\phi\circ\rs$, $\rs$ has no fixed points on any such twistor
line.
\end{proof}
\begin{cor}\label{c:normal} $Z$ is a twistor space, and for any canonical
twistor line $u=u(y)$ \textup(with normal bundle
$\Nb|_u\cong\Nb\hp|_u\ds\Nb\am|_u$ isomorphic to $\C^{2n}\tp\cO(1)$\textup),
\begin{align*}
H^0(u,\Nb\hp|_u)&= (TS\hp\tp\Lp^*)_y\tp (\Lp\ds\Lm)_y\\
H^0(u,\Nb\am|_u)&= (TS\am\tp\Lm^*)_y\tp (\Lp\ds\Lm)_y.
\end{align*}
\end{cor}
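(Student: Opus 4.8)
The plan is to obtain the corollary by restricting the global isomorphisms of Proposition~\ref{p:normal} to a single fibre of $p$, where each summand factors into a constant vector space and the dual tautological bundle of a two-dimensional space; the spaces of sections then follow at once from the canonical trivialization recorded in Observation~\ref{obs:flatc}.

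Concretely, I would fix $y=(x,\tx)\in\Sc$ and write $E:=(\Lp^*\ds\Lm^*)_y$, a two-dimensional complex vector space, so that the fibre $p^{-1}(y)=\Proj(E)$ is carried by $\phi$ isomorphically onto the canonical twistor line $u=u(y)$ (Definition~\ref{d:Z} and Step~4). Restricting Proposition~\ref{p:normal} to $p^{-1}(y)$, the pulled-back factors $p^*(TS\hp\tp\Lp^*)$ and $p^*(TS\am\tp\Lm^*)$ become the constant bundles with fibres $(TS\hp\tp\Lp^*)_y$ and $(TS\am\tp\Lm^*)_y$, while $\cO_{\Lp^*\ds\Lm^*}(1)$ restricts to the dual tautological bundle $\cO_E(1)$ on $\Proj(E)$. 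Hence
\begin{align*}
\Nb\hp|_u&\cong (TS\hp\tp\Lp^*)_y\tp\cO_E(1), &
\Nb\am|_u&\cong (TS\am\tp\Lm^*)_y\tp\cO_E(1).
\end{align*}
Since $TS\hp$ and $TS\am$ each have complex rank $n$, the two summands combine to give $\Nb|_u\cong\C^{2n}\tp\cO(1)$, which is the parenthetical assertion.

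With the normal bundle computed, I would assemble the first clause from the converse result recalled in~\S\ref{motex}: Proposition~\ref{p:manifold} provides the holomorphic $(2n+1)$-manifold $Z$, Proposition~\ref{p:rs} provides an antiholomorphic involution $\rs$ for which the canonical twistor lines over $S\sub\Sc$ are real and fixed-point-free, and the display above shows that these lines have normal bundle $\cO(1)\tp\C^{2n}$. These are exactly the hypotheses of the converse result, so $Z$ is the twistor space of a quaternionic $4n$-manifold.

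Finally, for the section spaces I would apply Observation~\ref{obs:flatc} to this $E$: the bundle map $\Proj(E)\times E^*\to\cO_E(1)$ identifies $H^0(u,\cO_E(1))$ canonically with $E^*=(\Lp\ds\Lm)_y$. Tensoring by the constant factors then yields the stated formulas
\begin{align*}
H^0(u,\Nb\hp|_u)&=(TS\hp\tp\Lp^*)_y\tp(\Lp\ds\Lm)_y, \\
H^0(u,\Nb\am|_u)&=(TS\am\tp\Lm^*)_y\tp(\Lp\ds\Lm)_y.
\end{align*}
The only care needed is the bookkeeping of these canonical identifications—that $\phi$ restricts to an isomorphism $p^{-1}(y)\to u$, so that $\Nb|_{p^{-1}(y)}$ genuinely computes the normal bundle of $u$ in $Z$, and that the twisting factors are constant along the fibre—and I expect no real obstacle here, since all the analytic content has already been absorbed into Proposition~\ref{p:normal}.
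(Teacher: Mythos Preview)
Your proposal is correct and is exactly the intended argument: the paper states this as a corollary without proof, since it follows immediately from Proposition~\ref{p:normal} (restricting to a fibre of $p$), Proposition~\ref{p:rs} (the real structure), and the elementary identification $H^0(\Proj(E),\cO_E(1))\cong E^*$ for the two-dimensional space $E=(\Lp^*\ds\Lm^*)_y$. Your unpacking of these steps is accurate, including the bookkeeping that $\phi|_{p^{-1}(y)}$ is an isomorphism onto $u(y)$ so that the normal bundle $\Nb$ on $\hat Z$ genuinely restricts to the normal bundle of $u$ in $Z$.
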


\subsection{The quaternionic manifold} By Corollary~\ref{c:normal}
and~\cite{BE}, the moduli space of twistor lines in $Z$ is a complexified
quaternionic manifold $M\cx$ with $TM\cx=\cE\tp\cH$, where
\begin{equation}\label{eq:TM}\begin{split}
\cE|_{\Sc} &= (TS\hp\tp\Lp^*)\ds(TS\am\tp\Lm^*), \qquad \cH|_{\Sc}=\Lp\ds\Lm\\
TM\cx|_{\Sc} &=TS\hp\ds TS\am \ds (TS\hp\tp\Lp^*\tp\Lm)\ds (TS\am\tp\Lp\tp\Lm^*).
\end{split}\end{equation}
Note that in this decomposition, the terms $TS\hp\ds TS\am$ correspond to the
tangent space to the submanifold $\Sc$ of $M\cx$. Furthermore, the moduli
space of real twistor lines is a real quaternionic manifold $M$ in $M\cx$
containing $S$~\cite{PP}. Since the $S^1$ action on $Z$ is generated by a
holomorphic vector field, whose local flow maps twistor lines to twistor
lines, it induces an $S^1$ action on $M\cx$, preserving $M$, and fixing $\Sc$
pointwise.

\begin{prop}\label{p:cproj} $S$ is a maximal totally complex submanifold
of $M$, and the induced c-projective structure via
Theorem~\textup{\ref{thm:totallycomplex}} is the original c-projective
structure $\Pi_c$ on $S$.
\end{prop}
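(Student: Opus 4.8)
The plan is to deduce both claims from the decomposition~\eqref{eq:TM} of $TM\cx|_\Sc=\cE\tp\cH$ with $\cH=\Lp\ds\Lm$. Since the $S^1$ action fixes $\Sc$ pointwise and preserves each canonical twistor line $u(y)$, it fixes the two points $\phi(\zers)$ and $\phi(\infs)$ of $u(y)$ and so acts on $\cH_y$ diagonally, with eigenlines $\Lm_y$ and $\Lp_y$. Hence (up to sign) the decomposition singles out the section $J$ of $\qs$ which, under $\qs\cx\cong\sgl(\cH)$, acts on the $\cH$-factor as $+\iI$ on $\Lp$ and $-\iI$ on $\Lm$; thus $J^2=-\id$. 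By~\eqref{eq:TM}, $J$ (acting through $\cH$) is $+\iI$ on $TS\hp=(TS\hp\tp\Lp^*)\tp\Lp$ and $-\iI$ on $TS\am$, so on the real locus $S$ it restricts to the complex structure of $S$; in particular $J(TS)\sub TS$. Any $I\in J^\perp$ is off-diagonal in $\sgl(\cH)$, interchanging $\Lp$ and $\Lm$, so it carries $TS\hp$ and $TS\am$ into the normal summands $TS\hp\tp\Lp^*\tp\Lm$ and $TS\am\tp\Lp\tp\Lm^*$ of~\eqref{eq:TM}; thus $I(TS)\cap TS=0$. As $\dim_\R S=2n=\tfrac12\dim_\R M$, this exhibits $S$ as a maximal totally complex submanifold with the complex structure of $\Pi_c$.

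For the c-projective structure I would use that, by Proposition~\ref{p:indproj} and Remark~\ref{rem:cgeod}, a holomorphic c-projective structure on $\Sc$ is determined by the projective structures it induces on the leaves of the $(1,0)$ and $(0,1)$ foliations. It therefore suffices to complexify the induced connection $D=\pi\circ\qD$ of Lemma~\ref{lem:totallycomplex} and to match its leafwise projective structures with those of $\Pi_c\cx$, which are flat by Proposition~\ref{p:flatindproj} and, by the construction of the developing maps following Definition~\ref{d:affine}, realise the leaves as open subsets of the fibres of $\Proj(\Vam)\to S\hp$ and $\Proj(\Vhp)\to S\am$. Fix a $(0,1)$-leaf $\pip^{-1}(x)$, with tangent bundle $TS\am$. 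Because $\phi$ restricts on $\hat Z\setminus\infs$ to the blow-down $\phm$ (Definition~\ref{phi}), which contracts $\zers$ to the zero section of $\Vam\to S\hp$, while $\pip$ collapses the leaf to $x$, all of $\zers$ over the leaf is sent to a single point $z_0\in Z$; thus every canonical twistor line over the leaf passes through $z_0$, and the leaf embeds in the $\alpha$-submanifold $\alpha_{z_0}$. As $z_0$ is $S^1$-fixed on $u(y)$, its line $\cO_Z(-1)_{z_0}$ is $\Lp_y$ or $\Lm_y$; since $TS\am\sub T\alpha_{z_0}=\cE\tp\cO_Z(-1)_{z_0}$, it must be $\Lm_y$, whence $T\alpha_{z_0}=\cE\tp\Lm=(TS\hp\tp\Lp^*\tp\Lm)\ds TS\am$ by~\eqref{eq:TM}.

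Unwinding the developing map and blow-down then shows that each $u(y)$ meets the chart $\Vam$ in the fibre-line $\ell_y\sub\Vam_x$ whose class is the developing image of $y$, so the tangent lines $T_{z_0}u(y)$ fill the vertical space $\Vam_x\sub T_{z_0}Z$. Under the embedding $\alpha_{z_0}\into\Proj(T_{z_0}Z)$ the leaf therefore maps isomorphically onto an open subset of the projective subspace $\Proj(\Vam_x)$, a totally geodesic submanifold, and this identification is precisely the developing map used to build $Z$. Hence the flat projective structure that $\Pi_c\cx$ induces on $\pip^{-1}(x)$ coincides with the canonical flat projective structure of $\alpha_{z_0}$ restricted to the leaf. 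By Proposition~\ref{p:alpha}, every quaternionic connection $\qD$ induces on $\alpha_{z_0}$ an affine connection in this projective class preserving the totally geodesic leaf; and since the complementary summand $TS\hp\tp\Lp^*\tp\Lm$ of $TS\am$ in $T\alpha_{z_0}$ is a normal summand in~\eqref{eq:TM}, the projection $\pi$ onto $T\Sc$ agrees on leaf-tangent vectors with the projection onto the leaf inside $\alpha_{z_0}$. Thus the leafwise projective structure induced by $D=\pi\circ\qD$ is exactly this restricted flat structure. Running the symmetric argument on the $(1,0)$-leaves $\pim^{-1}(\tx)$ with $\infs$, $\php$, $\Vhp$ and $z_\infty=\phi(\infs)$, both leafwise structures of $[D]_c\cx$ match those of $\Pi_c\cx$; by Remark~\ref{rem:cgeod} this gives $[D]_c\cx=\Pi_c\cx$, hence $[D]_c=\Pi_c$ on $S$.

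I expect the main obstacle to be this geometric heart of the second part: checking that the canonical twistor lines over a leaf sweep out precisely $\Vam_x\sub T_{z_0}Z$ at the common point $z_0$, and that the resulting projective identification of the leaf with $\Proj(\Vam_x)$ is exactly the developing map of the construction. This is what links the abstract flat projective structure of $\alpha_{z_0}$ to the concrete one built from $\Aff(\Lm)$, and it requires unwinding the blow-down geometry of Definition~\ref{d:Z} carefully; once it is in hand, the compatibility of $\pi$ with the $\alpha$-submanifold projection and the appeal to Proposition~\ref{p:alpha} are routine.
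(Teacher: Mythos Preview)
Your proposal is correct and follows essentially the same route as the paper: identify $J$ and $J^\perp$ via $\sgl(\cH)$ on $\cH|_S=\Lp\ds\Lm$ to establish the totally complex property, then reduce the c-projective claim via Remark~\ref{rem:cgeod} to the leafwise projective structures, use that all canonical twistor lines over a $(0,1)$-leaf pass through the common point $z_0$ (the zero of $\Vam_x$) so that the leaf sits as a projective submanifold of $\alpha_{z_0}$, and conclude by Proposition~\ref{p:alpha}. Your version is somewhat more explicit than the paper's on two points the paper leaves implicit---that the leaf is \emph{totally geodesic} in $\alpha_{z_0}$ (being a projective subspace $\Proj(\Vam_x)\sub\Proj(T_{z_0}Z)$), and that the projection $\pi$ of Lemma~\ref{lem:totallycomplex} agrees along the leaf with the projection inside $T\alpha_{z_0}$---but these are exactly the right details to fill in, and your identification of the ``main obstacle'' is the genuine heart of the argument.
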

\begin{proof} By~\cite{BE,PP}, $\qs\sub\gl(TM)$ is isomorphic to the bundle
of real tracefree endomorphisms of $\cH|_M$. The real endomorphisms of $\cH|_S
= (\Lp\ds\Lm)|_S$ (see~\eqref{eq:TM}) are those commuting with its
quaternionic structure $(\sigma,\tilde\sigma)\mapsto
(\tilde\sigma\circ\theta^{-1},\sigma\circ\theta)$. In particular
\begin{equation*}
J=\begin{pmatrix}
\iI&0\\ 0&-\iI
\end{pmatrix}
\end{equation*}
is a section of $\qs$, preserving $TS$, and inducing the original complex
structure on $S$. The bundle $J^\perp$ consists of endomorphisms of $\cH$
of the form
\begin{equation*}
I_s=\begin{pmatrix}
0&-s^{-1}\\ s&0
\end{pmatrix}.
\end{equation*}
where $s$ is a unit section of $(\Lp^*\tp\Lm)|_S$. Clearly the induced
endomorphisms of $TM$ maps $TS$ into $(TS\hp\tp\Lp^*\tp\Lm)\ds
(TS\am\tp\Lp\tp\Lm^*)$. Thus $(S,J)$ is a maximal totally complex submanifold
of $(M,Q)$.

By Remark~\ref{rem:cgeod} the original and induced c-projective structures on
$S$ are uniquely determined by the corresponding families of holomorphic flat
projective structures on the leaves of the $(1,0)$ and $(0,1)$ foliations of
$\Sc$. For $x\in S\hp$, the original flat projective structure on
$\pip^{-1}(x)$ has a development into $\Proj(\Vam_x)\sub\Proj(T_z Z)$, where
$z$ is the zero vector in $\Vam_x$. Hence $\pip^{-1}(x)$ is a projective
submanifold of the $\alpha$-submanifold corresponding to $z$ (with its
canonical projective structure). Hence by Proposition~\ref{p:alpha}, any
quaternionic connection on $M\cx$ induces a connection on $\pip^{-1}(x)$
compatible with its original projective structure.
\end{proof}

\begin{prop}\label{p:S1TNT} Locally near $S$, $M$ is $S^1$-equivariantly
diffeomorphic to a neighbourhood of the zero section of $TS\tp\ul$, where $\ul
=(\Lp\tp\Lm^*)|_S$ is unitary.
\end{prop}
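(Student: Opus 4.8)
The plan is to deduce the diffeomorphism from an $S^1$-equivariant tubular neighbourhood of $S$, having first computed the isotropy representation on the normal bundle. Since the $S^1$ action is by a compact group fixing $S$ pointwise, averaging any Riemannian metric on an $S^1$-invariant neighbourhood of $S$ produces an $S^1$-invariant metric; as $S$ is a component of the fixed point set of an isometric action, it is totally geodesic, and its normal exponential map is an $S^1$-equivariant diffeomorphism from a neighbourhood of the zero section of the normal bundle $NS$, carrying its isotropy $S^1$-action, onto a neighbourhood of $S$ in $M$. It therefore suffices to produce an $S^1$-equivariant isomorphism of real vector bundles $NS\cong TS\tp\ul$, where $TS$ carries its complex structure $J$ and $S^1$ acts by rotation of the $\ul$ factor.

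To identify $NS$, I would restrict the decomposition~\eqref{eq:TM} of $TM\cx$ to $S$. The summands $TS\hp\ds TS\am$ are tangent to $\Sc$, so the complexified normal bundle of $S$ in $M$ is
\begin{equation*}
NS\tp\C\cong(TS\hp\tp\Lp^*\tp\Lm)|_S\ds(TS\am\tp\Lp\tp\Lm^*)|_S .
\end{equation*}
On $S$ we have $TS\hp|_S=T\hp S$, $TS\am|_S=T\am S=\overline{T\hp S}$ and, since $\theta$ is the identity on $S$, $\Lm|_S=\overline{\Lp|_S}$; hence $(\Lp\tp\Lm^*)|_S=\ul$ and $(\Lp^*\tp\Lm)|_S=\overline{\ul}\cong\ul^*$, the latter isomorphism being exactly the assertion that $\ul$ is unitary. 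The real structure $\rs$ interchanges the two summands by conjugating both factors, so $NS$ is recovered as the fixed real subbundle and is real-linearly isomorphic to the $(1,0)$ summand $(T\hp S\tp\ul^*)_\R$. Composing with $\id\tp c$, where $c\colon\ul\to\ul^*$ is the conjugate-linear isomorphism furnished by the Hermitian metric, gives a real bundle isomorphism $NS\cong(T\hp S\tp\ul)_\R=TS\tp\ul$.

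It remains to match the $S^1$-weights. By Proposition~\ref{p:manifold} the action scales the fibres of $\Vam$ by $\lambda$ and those of $\Vhp$ by $\lambda^{-1}$; tracing this through Corollary~\ref{c:normal} shows that $S^1$ acts with opposite weights on the two summands of $NS\tp\C$, hence trivially on $TS$ and by weight one on $\ul$. As $c$ intertwines the weight-one action on $\ul$ with the dual action on $\ul^*$, the isomorphism of the previous paragraph is $S^1$-equivariant, and the equivariant exponential map then yields the claimed diffeomorphism. I expect the principal difficulty to lie in this last bookkeeping: fixing the orientation of the identification so that the fixed subbundle is $TS\tp\ul$ rather than a dual or conjugate, and verifying that the Hermitian pairing $\overline{\ul}\cong\ul^*$ is the one compatible with the circle action. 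Once the weight computation pins down this orientation, the remaining checks are routine.
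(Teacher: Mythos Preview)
Your proof is correct and follows exactly the same approach as the paper: identify the complexified normal bundle via~\eqref{eq:TM} as $(TS\hp\tp\Lp^*\tp\Lm)\ds(TS\am\tp\Lp\tp\Lm^*)$ restricted to $S$, take its real part, and apply the equivariant tubular neighbourhood theorem. The paper's proof is only two sentences and omits the identifications and weight computations you spell out; your worry about the $\ul$ versus $\ul^*$ orientation is harmless since the unitary structure makes the two real-isomorphic and $S^1$-equivariantly so.
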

\begin{proof} By~\eqref{eq:TM}, the normal bundle to $S$ in $M$ is the real
  part of $(TS\hp\tp\Lp^*\tp\Lm)\ds (TS\am\tp\Lp\tp\Lm^*)$. The result now
  follows by the equivariant tubular neighbourhood theorem.
\end{proof}

This completes the details needed for the proof of Theorem~\ref{maintheorem}.

\subsection{Proof of Theorem~\ref{conversetheorem}}\label{ctproof}

Let $(M,\qs)$ be a quaternionic $4n$-manifold with a quaternionic $S^1$ action
whose fixed point set has a connected component $S$ which is a submanifold of
real dimension $2n$ with no triholomorphic points.

If $J$ is the section of $\qs|_S$ generating the infinitesimal $S^1$ action,
then $(TM|_S,J)$ decomposes into weight spaces for the action, with zero
weight space $TS$. Thus $TS$ is $J$-invariant, and for any $I\in J^\perp$,
$ITS$ is a nonzero weight space, complementary to $TS$ in $TM$.  It follows
that $S$ is a (maximal) totally complex submanifold of $M$. By restricting to
a neighbourhood of $S$ in $M$, we may assume that the $S^1$ action has no
other fixed points. It thus lifts to a holomorphic $S^1$ action on the twistor
space $Z\to M$, generated by a holomorphic vector field transverse to the
fibres over $M\setminus S$, tangent to the fibres over $S$, and vanishing
(only) along the sections $\pm J$ of $Z|_S$, denoted $S\hp$ and $S\am$. Let
$\phi\colon\hat Z\to Z$ be the blow-up of $Z$ along $S\hp\cup S\am$, with
exceptional divisor $\zers\cup\infs$, where $\zers$ and $\infs$ are the
projective normal bundles in $Z$ of $S\hp$ and $S\am$ respectively.  The real
structure on $Z$ (induced by $-\id$ on $\qs$) interchanges $S\hp$ and $S\am$,
and induces a fibre-preserving real structure on $\smash{\hat Z}$
interchanging $\zers$ and $\infs$.

The proper transform in $\hat Z$ of any fibre of $Z|_S$ is a rational curve
with trivial normal bundle meeting both $\zers$ and $\infs$.  Thus
$\phi^{-1}(Z|_S)$ has a neighbourhood foliated by a $2n$-dimensional moduli
space $\Sc$ of rational curves with trivial normal bundle. Each such curve
meets $\zers$ and $\infs$ in unique points, and projects to a twistor line in
$Z$ meeting $S\hp$ and $S\am$ in unique points. The induced map $\Sc\to
S\hp\times S\am$ is an immersion along the proper transforms of the fibres of
$Z|_S$, hence an open embedding in a neighbourhood.  Thus we may assume $\hat
Z$ is a $\C\Proj^1$-bundle over a complex $2n$-manifold $\Sc$, which embeds as
an open subbundle of $\zers\to S\hp$ and $\infs\to S\am$, and as an open
neighbourhood $\Sc$ of the diagonal in $S\hp\times S\am$. By
Lemma~\ref{lem:totallycomplex} and Proposition~\ref{p:alpha}, the induced
c-projective structure on $S$ has c-projective curvature of type $(1,1)$: in
the complexified c-projective structure on $\Sc$, the fibres over $S\hp$ and
$S\am$ are projectively-flat.

The holomorphic $S^1$ action on $Z$ has a single nontrivial weight space at
each point of $S\hp\cup S\am$ (the normal bundle to $S$ in $M$ has the same
weight as the normal bundle to $S\hp$ or $S\am$ in $Z|_S$). Hence it acts by
scalar multiplication on the normal bundles $\Vam$ to $S\hp$ in $Z$, and $\Vhp$
to $S\am$ in $Z$.  In particular, the $S^1$ action is trivial on the
projectivizations of $\Vam$ and $\Vhp$, i.e., the lifted action on $\hat Z$
fixes $\zers\cup\infs$ pointwise.  Thus $\hat Z\setminus(\zers\cup\infs)$ is a
holomorphic principal $\C\mult$-bundle over $\Sc$, with associated
$\C\Proj^1$-bundle $\hat Z$. The associate (dual) line bundles are subbundles of
the pullbacks of $\Vam$ and $\Vhp$ to $\Sc$, which thus have trivial Cartan
connections along the fibres over $S\hp$ and $S\am$ respectively.  Unravelling
the constructions in~\S\ref{s:cproj}, these are twists of the Cartan
connections induced by the c-projective structure by dual and conjugate line
bundles which are flat along the fibres over $S\hp$ and $S\am$; we deduce that
these twists come from a complex line bundle $\cL\to S$ with both a holomorphic
and an antiholomorphic structure, hence a (Chern) connection with curvature of
type $(1,1)$. We now have reconstructed the data for the quaternionic
Feix--Kaledin construction of $Z$ as a blow-down on $\hat Z$, and hence of (a
neighbourhood of $S$ in) $M$. \qed

\section{Examples and applications}\label{s:examap}

\subsection{Complex grassmannians}\label{CG} In~\cite{Wolf}, J.~Wolf classified
the totally complex submanifolds of quaternionic symmetric spaces fixed by a
circle action. These provide many examples of the quaternionic Feix--Kaledin
construction which are not (even locally) hypercomplex. We focus on the the
quaternionic symmetric spaces isomorphic (for some $n\geq 1$) to
$\Gr_2(\C^{n+2})$, the complex grassmannian of $2$-dimensional subspaces of
$\C^{n+2}$.  The twistor space $Z$ is the flag manifold $F_{1,n+1}(\C^{n+2})$
of pairs $B\sub W\sub \C^{n+2}$ with $\dim B=1$ and $\dim W=n+1$. The standard
hermitian inner product $\langle\cdot,\cdot\rangle$ on $\C^{n+2}$ defines a
real structure on $Z$, sending the flag $B\sub W$ to $W^\perp\sub B^\perp$. It
also defines an antiholomorphic diffeomorphism between $\Gr_2(\C^n)$ with
$\Gr_n(\C^{n+2})$, and it is convenient to identify the quaternionic manifold
$M$ with the graph of this map in $\Gr_2(\C^{n+2})\times \Gr_n(\C^{n+2})$. In
these terms the twistor projection from $Z$ to $M$, whose fibres are the real
twistor lines, sends $B\sub W$ to the pair $(B\ds W^\perp, B^\perp\cap W)$ in
$M$.

The space of all twistor lines in $Z$ is the holomorphic (i.e., complexified)
quaternionic manifold $M\cx\cong\{(U,V)\in\Gr_2(\C^{n+2})\times\Gr_n(\C^{n+2}):
\C^{n+2}=U\ds V\}$:
\begin{bulletlist}
\item the flags $B\sub W$ on the twistor line corresponding to $(U,V)\in M\cx$
  have $B\sub U$ and $V\sub W$, so that $B=U\cap W$ and $W=V+B$;
\item this twistor line is canonically isomorphic to $\Proj(U)\cong
  \Proj(\C^{n+2}/V)$;
\item also $\cO_U(-1)\cong \cO_{\C^{n+2}/V}(-1)$ via the map sending $b\in U$ to
  $b+V$ in $\C^{n+2}/V$.
\end{bulletlist}

A fixed decomposition $\C^{n+2}=A\ds \tilde A$, with $\dim A=1$ and $\dim
\tilde A=n+1$, determines a submanifold $\Sc=\{(U,V)\in M\cx:A\sub U,\; V\sub
\tilde A\}$ of $M\cx$:
\begin{bulletlist}
\item $(U,V)\mapsto (U/A,V)$ embeds $\Sc$ as an open subset of
  $\Proj(\C^{n+2}/A)\times\Gr_n(\tilde A)$;
\item the fibre of $\Sc$ over $V\sub\tilde A$ is isomorphic to the affine
  space $\Proj(\C^{n+2}/A)\setminus\Proj((V\oplus A)/A)$ and similarly for the
  fibre over $U\supseteq A$;
\item $\Proj(\C^{n+2}/A)\cong\Proj(\tilde A)$ may be identified with
  $S\hp=\{B\sub\tilde A:\dim B=1\}\sub Z$, and, similarly, $\Gr_n(\tilde
  A)\cong\Gr_n(\C^{n+2}/A)$ with $S\am=\{A\sub W:\dim W=n+1\}\sub Z$.
\item $\Gr_n(\tilde A)\cong \Proj(\tilde A^*)$ is the dual projective space to
  $\Proj(\C^{n+2}/A)\cong\Proj(\tilde A)$, and for any $(U,V)\in\Sc$, the
  corresponding tautological lines $(\tilde A/V)^*\cong V^0\sub \tilde A^*$ and
  $U/A\cong U\cap \tilde A$ are canonically dual to each other.
\end{bulletlist}
If $\tilde A=A^\perp$ then the real points in $\Sc\sub M\cx$ form a maximal
totally complex submanifold $S\sub M$ fixed by an $S^1$ action, and $S\hp$,
$S\am$ are lifts of $S$ to $Z$ with respect to the induced complex structures
$\pm J$ on $S$. Hence Theorem~\ref{conversetheorem} applies.

Following the proof in~\S\ref{ctproof}, let $\hat Z$ be the blow-up of $Z$
along $S\hp\cup S\am$.  The fibre of $\hat Z\to \Sc$ over $(U,V)$ is
$\Proj(U)\cong\Proj(\C^{n+2}/V)$, and the natural map to $Z$ is a
biholomorphism over $(B\sub W)\in Z$ unless $B=A$ or $W=\tilde A$, which are
the ``zero'' and ``infinity'' sections $\zers$ and $\infs$ of $\hat Z\to \Sc$,
mapping to $S\hp$ and $S\am$ respectively. Identifying $\Sc=\Proj(\tilde
A)\times \Proj(\tilde A^*)$, $\hat Z\cong\Proj(\cO_{\tilde
  A}(-1)\ds\cO)|_{\Sc}\cong \Proj(\cO\oplus \cO_{\tilde A^*}(-1))|_{\Sc}$.

We now set $\tilde A=A^\perp$ and identify $\Proj(\tilde A^*)$ with
$\overline{\Proj(A^\perp)}$ using the real structure; thus $\Sc$ is the open
subset $\{([\ell],[w])\in \Proj(A^\perp)\times \overline{\Proj(A^\perp)}:
\langle\ell,w\rangle\neq 0\}$, and the hermitian metric induces a pairing of
the tautological line bundles over $\Proj(A^\perp)$ and
$\overline{\Proj(A^\perp)}$, i.e., a nonvanishing section of $\cO(1,1)\to\Sc$.
On the (anti-)diagonal $S$ in $\Proj(A^\perp)\times\overline{\Proj(A^\perp)}$,
this section may be viewed as a hermitian metric on $\cO(-1)\to S$.

Locally, $\cO(-1)\to S$ has a square root $\cL=\cO(-\frac 12)$, and the
trivialization of $\cO(1,1)$ identifies $\cO(1,0)$ with $\cO(\frac
12,-\frac12)$. Thus we have the following result.
\begin{prop} Let $\Pi_c$ be the flat c-projective structure on $S$ and
let $\cL=\cO(-\frac12)$ \textup(defined over any open subset of
$S$\textup). The standard hermitian metric on $\C^{n+2}$ induces hermitian
metric on $\cL$ with Chern connection $\nabla$.  Then $Z$ and $M$ are obtained
from the quaternionic Feix--Kaledin construction applied to these data.
\end{prop}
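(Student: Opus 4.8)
The plan is to run the converse construction of \S\ref{ctproof} on the triple $(M,Z,S)$ coming from the grassmannian and to check that the c-projective structure, line bundle, and connection it returns are the flat c-projective structure on $\Proj(\tilde A)$, the bundle $\cO(-\tfrac12)$, and the Chern connection of the induced hermitian metric. As noted just before the statement, $S$ is a $2n$-dimensional component of the fixed-point set with no triholomorphic points, so Theorem~\ref{conversetheorem} applies: a neighbourhood of $S$ in $M$, together with $Z$, arises from the quaternionic Feix--Kaledin construction for the induced c-projective structure and \emph{some} $(\cL,\nabla)$ of type $(1,1)$. It remains only to name these explicitly.

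First I would identify the c-projective structure. The leaves of the $(1,0)$ and $(0,1)$ foliations of $\Sc$ are the fibres of the projections to $S\hp\cong\Proj(\tilde A)$ and $S\am\cong\Proj(\tilde A^*)$, hence open subsets of projective spaces carrying their standard flat projective structures. By Proposition~\ref{p:alpha} these agree with the projective structures induced by any quaternionic connection, and by Remark~\ref{rem:cgeod} the c-projective structure of $S$ is determined by its leafwise projective structures. Thus $\Pi_c$ is the flat c-projective structure on $S\cong\C\Proj^n$, as asserted.

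Next I would pin down the line bundles. From \S\ref{ctproof}, the blow-up of $Z$ along $S\hp\cup S\am$ is $\hat Z=\Proj(\cO_{\tilde A}(-1)\ds\cO)|_\Sc$, and this is simultaneously the Feix--Kaledin bundle $\Proj(\Lp^*\ds\Lm^*)$; hence $\Lp^*\ds\Lm^*$ and $\cO_{\tilde A}(-1)\ds\cO$ differ by a twist by a line bundle $\cM$ on $\Sc$. Since $\Lp$ and $\Lm$ are conjugate pullbacks from the two factors $\Proj(\tilde A)$ and $\Proj(\tilde A^*)$, the product $\Lp\tp\Lm$ is a power of $\cO(1,1)$, which the standard hermitian form on $\C^{n+2}$ trivialises; thus $\det(\Lp^*\ds\Lm^*)=\Lp^*\tp\Lm^*$ is trivial. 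Comparing with $\det(\cO_{\tilde A}(-1)\ds\cO)=\cO_{\tilde A}(-1)$ gives $\cM^{\tp2}=\cO_{\tilde A}(1)$, so $\cM=\cO(\tfrac12,0)$ and $\Lp^*\ds\Lm^*=\cO(-\tfrac12,0)\ds\cO(\tfrac12,0)$. Combined with the conjugation relation $\Lm=\overline{\theta^*\Lp}$ and the identification of $\cO(1,0)$ with $\cO(\tfrac12,-\tfrac12)$ recorded above, this forces the symmetric normalisation $\Lp=\pip^*\cO(\tfrac12)$ and $\Lm=\pim^*\cO(\tfrac12)$.

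Finally I would read off $\cL$ and $\nabla$. On $\Proj(\tilde A)\cong\C\Proj^n$ one has $\Wedge^nT\Proj(\tilde A)=\cO(n+1)$, so $\cO_S(1)=\cO(1)$ in the sense of Definition~\ref{d:O(1)}; since the construction sets $\Lp=\pip^*(\cL\tp\cO_S(1))$, the identification $\Lp=\pip^*\cO(\tfrac12)$ yields $\cL=\cO(\tfrac12)\tp\cO_S(-1)=\cO(-\tfrac12)$ (locally defined, as in the statement). The input $\nabla$ must have type $(1,1)$ curvature so that the coupled leafwise Cartan connections are flat (Proposition~\ref{p:flatindproj}); concretely it is the Chern connection determined by the holomorphic and antiholomorphic structures that $\Lp$ and $\Lm$ induce on $\cL$ together with the metric that the form on $\C^{n+2}$ induces on $\cO(-1)$, hence on its square root $\cL=\cO(-\tfrac12)$. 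This is precisely the Chern connection $\nabla$ of the proposition, so $Z$ and $M$ are the output of the quaternionic Feix--Kaledin construction on $(\Pi_c,\cL,\nabla)$. The main obstacle is the bookkeeping of the half-integer twists: one must check that the conjugation relation $\Lm=\overline{\theta^*\Lp}$ and the hermitian trivialisation of $\cO(1,1)$ genuinely force the symmetric square-root normalisation of $\Lp$ and $\Lm$, and that the resulting Chern connection is exactly the type-$(1,1)$ connection the construction requires, not merely some connection of that type.
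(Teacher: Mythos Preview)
Your proposal is correct and follows essentially the same route as the paper: the proposition has no separate proof block in the paper, but is a summary of the preceding discussion, which (as you do) applies the converse construction of \S\ref{ctproof}, identifies $\hat Z\cong\Proj(\cO_{\tilde A}(-1)\ds\cO)|_{\Sc}$, and then uses the hermitian trivialisation of $\cO(1,1)$ to renormalise this as $\Proj(\Lp^*\ds\Lm^*)$ with $\Lp=\cO(\tfrac12)$, whence $\cL=\cO(-\tfrac12)$. Your determinant comparison $\cM^{\tp2}=\cO_{\tilde A}(1)$ is just a slightly more explicit way of extracting the square-root twist that the paper obtains directly from the identification $\cO(1,0)\cong\cO(\tfrac12,-\tfrac12)$; the caveat you flag about verifying that the connection produced by the converse theorem is exactly the Chern connection is also left implicit in the paper.
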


Note that this example demonstrates the crucial role played by the twist: in
the motivating example (see Section \ref{motex}) we have shown that the flat
c-projective structure on $\C\Proj^n$ with trivial line bundle $\cL$ yield the
flat quaternionic model.

\subsection{The four-dimensional case and Einstein--Weyl spaces}\label{s:4d}

In four dimensions, a quaternionic manifold $(M,\qs)$ is a self-dual conformal
manifold.  LeBrun~\cite{Le} studied quotients of self-dual manifolds by a
class of $S^1$ actions which he called ``docile''; these include
\emph{semi-free} $S^1$ actions (whose stabilizers are either trivial or the
whole group), for which one of his results specializes as follows.

\begin{lemma}[\cite{Le}]\label{lemmaLe} Let $(M,g)$ be a self-dual manifold
with a semi-free $S^1$ action whose fixed point set is a nonempty surface $S$.
Let $B$ be a maximal smooth manifold \textup(without boundary\textup) in
$Y=M/S^1$. Then the Einstein--Weyl structure~\cite{Hit} $D$ on $B$ defined by
the Jones--Tod correspondence~\cite{JT} has $S$ as an asymptotically
hyperbolic end.
\end{lemma}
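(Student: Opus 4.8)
The plan is to obtain this as a specialization of LeBrun's theorem on docile $S^1$-actions in \cite{Le}, the only work being to check that semi-freeness qualifies and then to put the behaviour near the fixed surface into the model hyperbolic form. \textbf{First} I would verify that a semi-free action is docile in LeBrun's sense. On $M\setminus S$ the stabilizers are trivial, so $(M\setminus S)/S^1$ is a smooth $3$-manifold; and since $S$ has codimension two and no nontrivial finite stabilizers can occur, the action near $S$ must be a weight-one rotation of the normal $2$-plane bundle. Consequently $Y=M/S^1$ is a manifold with boundary the image of $S$, its maximal smooth manifold without boundary is the interior $B=Y\setminus S$, and LeBrun's form of the Jones--Tod correspondence \cite{JT} equips $B$ with an Einstein--Weyl structure $D$ \cite{Hit}. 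It then remains only to describe the geometry as one approaches $S$.

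\textbf{Second}, the geometric heart is a normal-form computation in Fermi coordinates $(s,r,\phi)$ about $S$, where $s$ parametrizes $S$, $r$ is the $g$-distance to $S$, and $\phi$ is the rotation angle, so that the generating Killing field is $K=\partial_\phi$. Because the action is a weight-one rotation fixing $S$ exactly, $|K|^2=r^2+O(r^3)$ vanishes to precisely second order along $S$, and to leading order $g\sim dr^2+g_S+r^2\,d\phi^2$, where $g_S$ is the induced metric on the surface $S$. Projecting out the $\partial_\phi$-direction gives the smooth quotient metric $dr^2+g_S$ on $B$ near the boundary $r=0$.

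\textbf{Third}, I would exhibit the hyperbolic model inside the Einstein--Weyl conformal class. Since $|K|^2$ vanishes quadratically, the natural Jones--Tod scale differs from $dr^2+g_S$ by a fixed power of $|K|$, and in the gauge adapted to the conformal boundary the representative behaves like $|K|^{-2}(dr^2+g_S)\sim r^{-2}(dr^2+g_S)$ near $S$. Substituting $r=e^{-x}$ converts this to $dx^2+e^{2x}g_S$, the model hyperbolic end, with $x\to\infty$ as $r\to0$; thus $S$ sits at infinity, with conformal infinity the induced conformal structure $[g_S]$ on the surface. Together with LeBrun's identification of the Weyl connection $D$ in this gauge, this realizes $S$ as an asymptotically hyperbolic end of $(B,D)$.

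\textbf{The main obstacle} I anticipate is not the conformal blow-up itself, which is forced by the exactly quadratic vanishing of $|K|^2$ and is therefore robust, but rather matching LeBrun's precise normalization of the Weyl gauge and checking that the connection $D$---and not merely the conformal metric---is asymptotic to the hyperbolic model at the correct rate, so that $S$ is a genuine asymptotically hyperbolic end in the Einstein--Weyl sense rather than just a conformal boundary. Since the statement is attributed to \cite{Le}, in practice this step is supplied by invoking LeBrun's general docile-case result, leaving only the semi-free specialization and the local computation above.
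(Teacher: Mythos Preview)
The paper does not prove this lemma at all: it is stated with attribution \cite{Le} and followed only by a one-sentence gloss on what ``asymptotically hyperbolic end'' means, before being applied in the next proposition. So there is no proof in the paper to compare your proposal against.

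That said, your sketch is a reasonable outline of how one would extract the semi-free case from LeBrun's docile result, and you have correctly identified the key mechanism: the quadratic vanishing of $|K|^2$ along the fixed surface forces the $|K|^{-2}$-rescaled quotient metric into the hyperbolic model near $S$. Your caveat at the end is also well placed---the delicate point is indeed the asymptotics of the Weyl connection $D$, not merely the conformal class, and for that one really does need to invoke LeBrun's analysis rather than just the normal-form computation. Since the paper treats the lemma as a black-box citation, your proposal already goes further than the paper does.
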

This means that $D$ is asymptotic (in a precise sense~\cite{Le}) to the
Levi-Civita connection of the hyperbolic metric in a punctured neighbourhood
of the image of $S$ in $Y$.
\begin{prop} The quotient by the $S^1$ action of the self-dual
conformal $4$-manifold obtained by the quaternionic Feix--Kaledin construction
is Einstein--Weyl with $S$ as an asymptotically hyperbolic end.
\end{prop}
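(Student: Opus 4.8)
The plan is to deduce the statement from LeBrun's Lemma~\ref{lemmaLe}, so the work is to check its hypotheses for the self-dual conformal $4$-manifold $M$ produced by Theorem~\ref{maintheorem} when $n=1$. Two of the three are immediate: a quaternionic $4$-manifold is a self-dual conformal manifold (\S\ref{s:4d}), and the fixed component $S$ is a surface, since $\dim_\R S=2n=2$. Restricting $M$ to the tubular neighbourhood of $S$ supplied by Proposition~\ref{p:S1TNT}, we may also assume $S$ is the \emph{entire} fixed-point set. The one substantive point is therefore to show that the quaternionic $S^1$ action is \emph{semi-free}, i.e.\ every stabiliser is trivial or all of $S^1$.

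As $S$ is the fixed locus, semi-freeness near $S$ amounts to the claim that $S^1$ rotates the normal bundle $NS$ with weight $\pm1$, so that every point off the zero section has trivial stabiliser. I would read this weight off the construction. By~\eqref{eq:TM} with $n=1$, the complexified normal bundle is $(TS\hp\tp\Lp^*\tp\Lm)\ds(TS\am\tp\Lp\tp\Lm^*)$, two complex-conjugate summands interchanged by the real structure, hence of opposite weights $\pm w$. To compute $w$, recall (after~\eqref{eq:TM}) that the $S^1$ action fixes $\Sc$ pointwise, so the tangent summands $TS\hp\ds TS\am$ of $TM\cx|_\Sc$ carry weight $0$; writing $a$ for the weight of $S^1$ on $\Lp$, the factor $TS\hp\tp\Lp^*$ of $\cE$ then has weight $-a$, so the normal summand $TS\hp\tp\Lp^*\tp\Lm$ has weight equal to that of $\Lm\tp\Lp^*$. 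By Proposition~\ref{p:manifold} the $\C\mult$ action is scalar multiplication by $\lambda$ on $\Vam$, and $\phm$ realises $\hat Z\setminus\infs\cong\Lm^*\tp\Lp$ equivariantly inside $\Vam$ via the tautological inclusion and blow-down; hence $\Lm^*\tp\Lp$ has weight $1$, giving $w=\pm1$. Thus the action is semi-free on a neighbourhood of $S$.

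With the hypotheses in place, Lemma~\ref{lemmaLe} applies verbatim: the Jones--Tod quotient $Y=M/S^1$ carries an Einstein--Weyl structure $D$ on its maximal smooth part, with $S$ as an asymptotically hyperbolic end, which is exactly the assertion. I expect the weight computation to be the only genuine obstacle. The remaining care is bookkeeping: matching LeBrun's choice of a compatible self-dual metric to the (conformal, real-analytic) data produced here, and the restriction to a tubular neighbourhood ensuring both that $S$ is precisely the fixed-point set and that the semi-free condition holds throughout.
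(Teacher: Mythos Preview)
Your proposal is correct and follows the same route as the paper: reduce to LeBrun's Lemma~\ref{lemmaLe} by verifying its hypotheses. The paper's own proof is extremely terse---it simply asserts that the action is conformal (being induced by a holomorphic vector field on the twistor space) and that it is ``clearly semi-free'' with fixed set the zero section, then invokes the lemma. Your weight computation on the normal bundle, via the $\C\mult$ action on $\Vam$ and the identification $\hat Z\setminus\infs\cong\Lm^*\tp\Lp$, is precisely the content the paper suppresses behind the word ``clearly'', so you have supplied a genuine justification rather than a different argument.
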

\begin{proof} The $S^1$ action is induced by a holomorphic vector field on
the twistor space, which implies that it is conformal (see for example
\cite{JT}). It is also clearly semi-free and the zero section is the fixed
point set, which by Lemma~\ref{lemmaLe} completes the proof.
\end{proof}

There are special features of the quaternionic Feix--Kaledin construction of
$(M,\qs)$ from a surface $S$ with a c-projective structure.  As discussed in
\S\ref{s:csurf}, such a surface $S$ carries more data than $(J,\Pi_c)$. In the
approach discussed there, the additional data is a second order
operator~\cite{Cald}. Alternatively, one can characterize the Cartan
connection on $S$ or $\Sc$ explicitly. Following~\cite{Bor,BuCa}, we now
consider the latter approach (on $\Sc$).

A \emph{conformal Cartan connection} $(\cV,\Lam,\cD)$ on a holomorphic
surface $\Sc$ consists of:
\begin{bulletlist}
\item a rank $4$ holomorphic vector bundle $\cV\to \Sc$ with inner product
  $\langle,\rangle$;
\item a null line subbundle $\Lam\subset V$;
\item a linear metric connection $\cD$ satisfying the Cartan condition, that
  $\cD|_{\Lam} \mod \Lam$ is an isomorphism from $T\Sc\tp\Lam$ to
  $\Lam^{\perp}/\Lam$.
\end{bulletlist}
The Cartan condition implies that $T\Sc$ carries a conformal structure. We may
suppose that $\Sc\into S\hp\times S\am$ where the the leaves of the $(1,0)$
and $(0,1)$ foliations are the null curves of the conformal structure; we then
write $\Lam^\perp=U^++U^-$, where $U^+\cap U^-=\Lam$ and $\cD^{1,0}\Lam\sub
U^+$ and $\cD^{0,1}\Lam\sub U^-$. Observe that $\cD^{1,0}$ and $\cD^{0,1}$ are
flat connections, on $U^+$ and $U^-$ respectively, along the curves of the
$(1,0)$ and $(0,1)$ foliations respectively.

In~\cite{Bor}, the first author constructed a minitwistor space~\cite{Hit} of
an asymptotically hyperbolic Einstein--Weyl manifold $B$ from a conformal
Cartan connection by lifting the curves of $(1,0)$ and $(0,1)$ foliations to
$\Proj(U^+)$ and $\Proj(U^-)$ respectively, and gluing together the leaf
spaces. We now relate this approach to the quaternionic Feix--Kaledin
construction. The work of~\cite{Bor} already shows that $B$ is a quotient of a
self-dual $4$-manifold $M$ with an $S^1$ action, whose twistor space $Z$ is also
constructed explicitly there. Hence it suffices to establish the following.
\begin{prop} The construction of the twistor space in~\cite{Bor} from $S$
coincides with the quaternionic Feix--Kaledin construction given here.
\end{prop}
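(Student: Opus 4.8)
The plan is to prove equality of the two threefolds $Z$ by matching their building blocks: both are assembled from the complexification $\Sc$ with its transverse $(1,0)$ and $(0,1)$ foliations, by lifting the leaves into two projective-bundle charts and gluing along the leaf spaces. The strategy is to identify the rank-$2$ bundles $U^+$, $U^-$ of the conformal Cartan connection, together with their leafwise-flat connections $\cD^{1,0}$, $\cD^{0,1}$, with the affine-section bundles $\Vhp$, $\Vam$ of Definition~\ref{d:affine}, and then to check that the tautological-null-line lift used in~\cite{Bor} agrees with the developing maps of Step~2 of the construction.

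First I would observe that, since $\cD^{1,0}$ is flat along the $(1,0)$-curves (the fibres of $\pim$) and $\cD^{0,1}$ is flat along the $(0,1)$-curves (the fibres of $\pip$), the bundles $U^+$ and $U^-$ descend to holomorphic rank-$2$ bundles over the leaf spaces $S\am$ and $S\hp$; these are the bundles whose projectivizations~\cite{Bor} uses to build $Z$. The central identification is that, restricted to any null curve $C$, the pair $(U^+,\cD^{1,0})$ (resp.\ $(U^-,\cD^{0,1})$) is the rank-$2$ projective tractor bundle of $C$, namely the jet bundle $J^1\cO_C(1)$ with the flat Cartan connection of~\S\ref{s:pc-affine}, coupled to $\cL$ as in~\S\ref{s:cproj}. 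Consequently the descents of $U^+$ and $U^-$ are canonically $\Vhp$ and $\Vam$, and the tautological null line $\Lam\subset U^\pm$ corresponds to the fibrewise tautological bundle $\cO_{\Lp^*\ds\Lm^*}(-1)$ of $\hat Z=\Proj(\Lp^*\ds\Lm^*)$. Under this correspondence, the lift of a $(1,0)$-curve into $\Proj(U^+)$ is exactly the developing map $\Sc\to\Proj(\Vhp)$ over $S\am$, and the lift of a $(0,1)$-curve into $\Proj(U^-)$ is the developing map $\Sc\to\Proj(\Vam)$ over $S\hp$.

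With the bundles and developing maps matched, the two gluing prescriptions coincide. In both cases one passes from $\hat Z$ to its blow-down $Z$ by contracting $\zers$ and $\infs$, and the two charts $\phm\colon\hat Z\setminus\infs\to\Vam$ and $\php\colon\hat Z\setminus\zers\to\Vhp$ are glued over $\hat Z\setminus(\zers\sqcup\infs)$ by the inversion identification $\phm(z)\sim\php(z)$ of Definition~\ref{d:Z}. This is precisely the gluing of the two leaf-space charts in~\cite{Bor}, so the resulting spaces $Z$ agree; the $S^1$ actions also match, being generated in each case by scalar multiplication in the fibres of $\Vam$ and $\Vhp$.

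The main obstacle is the tractor identification in the second step. Because a conformal surface carries strictly more data than its c-projective structure (\S\ref{s:csurf}), it is not enough to match the leafwise projective structures: one must verify that the second-order M\"obius data encoded in $(\cV,\Lam,\cD)$ reproduces exactly the normalized Ricci tensor $\ri^D$ and the type-$(1,1)$ line-bundle connection $\nabla$ that define the coupled flat connections $\cD^\nabla$ along the leaves, and that the weight conventions---in particular $\cO_S(2)\cong TS$ and the twist by $\cL$---agree on both sides. The bookkeeping of these twists is also what reconciles the two affine charts, which differ by the inversion $\ell\mapsto1/\ell$. Once this leafwise compatibility is checked, the equality of the developing maps---and hence of the glued threefolds $Z$---follows at once.
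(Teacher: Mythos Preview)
Your overall plan---identify the leafwise rank-$2$ tractor bundles on the two sides, match the developing maps, and conclude that the glued threefolds agree---is exactly the paper's approach. However, the specific claim that $(U^+,\cD^{1,0})$ restricted to a $(1,0)$-curve \emph{is} the jet bundle $J^1\cO_C(1)$ coupled to $\cL$, and hence that ``the descents of $U^+$ and $U^-$ are canonically $\Vhp$ and $\Vam$'', is not correct as stated: a duality and a line-bundle twist intervene, and this is precisely the obstacle you flag but do not resolve.

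The paper supplies the missing mechanism, and it uses a piece of structure you never invoke: the inner product on $\cV$. This pairing gives a duality between $U^+$ and $\cV/U^+$ along the $(1,0)$-leaves, under which the Cartan condition identifies the filtration $\Lam^\perp/U^+\subset\cV/U^+$ with the $1$-jet filtration $T^*S\hp\tp(\cV/\Lam^\perp)\subset J^1(\cV/\Lam^\perp)$. One then factors $\Lam=\Lmp\tp\Lmm$ with $\Lmp$ leafwise trivial along the $(1,0)$-foliation and $\Lmm$ along the $(0,1)$-foliation; it is the twisted bundles $\tilde U^\pm:=U^\pm\tp(\Lam^\pm)^{-2}$ whose leafwise parallel sections are dual to the spaces of affine sections of $\Lam^\pm\tp(\Lam^\mp)^*$. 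The match with $\Vhp$, $\Vam$ then comes by setting $\Lmp=\Lm^*$ and $\Lmm=\Lp^*$. Without the metric duality and this explicit twist, your ``central identification'' cannot be made precise, and the rest of the argument does not get off the ground.
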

\begin{proof} The inner product on $\cV$ induces a duality between $U^+$
and $\cV/U^+$, with respect to which $\cD^{1,0}$ induces dual connections
along the curves of the $(1,0)$ foliation. We thus have isomorphisms
\begin{diagram}[size=1.5em]
0&\rTo&T^*S\hp \tp \cV/\Lam^\perp&\rTo &
J^1(\cV/\Lam^\perp)&\rTo& \cV/\Lam^\perp&\rTo& 0\\
&&\dTo&&\dTo&&\dEq&&\\
0&\rTo&\Lam^\perp/U^+&\rTo& \cV/U^+&\rTo& \cV/\Lam^\perp&\rTo& 0,
\end{diagram}
and similarly for $\cD^{0,1}$ on $U^-$ and $\cV/U^-$ along the $(0,1)$
foliation.

As explained in~\cite{Bor}, we may also suppose that $\Lam=\Lmp\tp\Lmm$, with
$\Lmp$ and $\Lmm$ trivial along the $(1,0)$ and $(0,1)$ foliations
respectively. The bundles $\tilde{U}^+:= U^+\tp(\Lmp)^{-2}$ and $\tilde{U}^-:=
U^-\tp(\Lmm)^{-2}$ have induced flat connections along the $(1,0)$ and $(0,1)$
foliations respectively, dual to $(\cV/U^-)\tp(\Lmp)^2$ and
$(\cV/U^+)\tp(\Lmm)^2$. Hence, along the null curves, the spaces $\cV^\pm$ of
parallel sections of $\tilde U^\pm$ are dual to spaces of affine sections of
$(\cV/\Lam^\perp)\tp(\Lam^\pm)^2\cong \Lam_\pm\tp(\Lam_\mp)^*$. Hence the
construction in~\cite{Bor} reduces to the one herein by taking $\Lmp=\Lm^*$ and
$\Lmm=\Lp^*$.
\end{proof}

The link with conformal Cartan connections elucidates the role of the
connection $\nabla$ on $\cL\to S$: any conformal Cartan connection over $S$,
is up to isomorphism, the twist of the normal Cartan connection (induced by a
M\"obius structure~\cite{Cald}) by such a connection $\nabla$. The
construction of the Einstein--Weyl manifold $B$ as an $S^1$-quotient equips it
with a distinguished gauge (or abelian monopole)~\cite{JT}.  Since
$\Proj(\cE\tp\cL)=\Proj(\cE)$ for any line bundle $\cL$ and vector bundle
$\cE$, the construction of the minitwistor space from $\Proj(\cV^+)$ and
$\Proj(\cV^-)$ does not depend on $(\cL,\nabla)$. We thus have a gauge for
each such choice.

\subsection{The hypercomplex and hyperk\"ahler cases}\label{origFK}

The line bundles $\Lp\to S\hp$ and $\Lm\to S\am$ which provide the input to
the quaternionic Feix--Kaledin construction are twists of the line bundle
$\cO_S(1)$, over a c-projective manifold $S$ with c-projective curvature of
type $(1,1)$, by a connection $\nabla$ on a complex line bundle $\cL\to S$
with curvature of type $(1,1)$. When $\cO_S(1)$ itself admits such a
connection, we can take $\cL=\cO_S(-1)$, so that $\Lp\to S\hp$ and $\Lm\to
S\am$ are trivial bundles.

\begin{prop}\label{propfeix} If the c-projective structure $\Pi_c$ on $S$
admits a real-analytic connection $D$ with curvature of type $(1,1)$, and
$\nabla$ is the induced connection on $\cL=\cO_S(-1)$, then the quaternionic
manifold $M$ of Theorem~\textup{\ref{maintheorem}} is hypercomplex, and is
the hypercomplex manifold constructed by Feix~\cite{Feix2}. Furthermore, when
$D$ is the Levi-Civita connection of a K\"ahler metric, then $M$ is
hyperk\"ahler, as in~\cite{Feix}.
\end{prop}
\begin{proof} As noted above, the assumptions of this theorem imply that $\Lp
\to S\hp$ and $\Lm\to S\am$ are trivial. We compute their spaces of affine
sections using the connection $D\in\Pi_c$, so that twisted connections
$D^\nabla$ on $\Lp$ and $\Lm$ are trivial. Furthermore, $D$ has curvature of
type $(1,1)$ if and only if $\Pi_c$ has c-projective curvature of type $(1,1)$
and $\ri^D$ has type $(1,1)$. Thus, in this case, $\ri^D$ vanishes on the
leaves of the $(1,0)$ and $(0,1)$ foliations, and hence a function $f$ on such
a leaf defines an affine section if and only if $D\d f=0$ along the leaf,
i.e., $f$ is an affine function with respect to the flat affine connection
induced by $D$ on the leaf. We conclude that $\Vhp$ and $\Vam$ are vector
bundles dual to the spaces of affine functions along leaves considered by
Feix~\cite{Feix,Feix2}.

It is easy to check that $\php\colon \Sc\times\C\to \Vhp$ and $\phm\colon
\Sc\times\C\to \Vhp$ send $(x,\tx,1)$ to the evaluation maps that Feix uses in
her construction; hence our construction reduces to hers. Because constant
functions are affine, the projection $\hat Z=\Sc\times\C P^1\to \C P^1$
descends to $Z$, which implies $M$ is hypercomplex by~\cite{HKLR}. We refer
to~\cite{Feix} for the proof that $M$ is hyperk\"ahler when $D$ is the
Levi-Civita connection of a K\"ahler metric.
\end{proof}

\subsection{Instantons, twists, and quaternionic complex structures}
\label{s:twist}

In the remaining applications we make use of the quaternionic
generalization~\cite{Joyce2,MaSa,Sal} of the Penrose--Ward correspondence for
self-dual Yang--Mills connections on self-dual conformal
$4$-manifolds~\cite{AHS}. Recall that a $G$-connection $\nabla$ on a
$G$-bundle $V$ over a quaternionic $4n$-manifold $(M,\qs)$ is called a
\emph{$G$-instanton} (or a \emph{quaternionic}, \emph{self-dual} or
\emph{hyperholomorphic} $G$-connection) if its curvature $F^\nabla$ is
$\qs$-hermitian, i.e., $F^\nabla(IX,Y)+F^\nabla(X,IY)=0$ for all $I\in\qs$ and
$X,Y\in TM$. This means equivalently the complexified pullback $\cV$ of $V$ to
$Z$ is holomorphic and of degree zero, i.e., trivial on twistor
lines~\cite{Joyce2,MaSa,Sal}. From the perspective of complexified
quaternionic geometry, if $V\cx\to M\cx$ is the bundle whose fibre over $u\in
M\cx$ is the space of parallel sections over the twistor line $u\sub Z$, then
$\nabla$ extends to a $G\cx$-connection on $V\cx$ which is flat on
$\alpha$-submanifolds, and conversely (taking $M\cx$ to be sufficiently small)
$\cV_z$ is the space of parallel sections of $V\cx$ along $\alpha_z$.

Suppose now that $\tilde G$ acts on $M$ preserving $\qs$ with $\dim\tilde
G=\dim G$, and let $P$ be the principal $G$-bundle with connection
$\omega\colon P\to\g$ induced by $(V,\nabla)$.  Then D.~Joyce
showed~\cite{Joyce2} that for any lift of the $\tilde G$ action to $P$
commuting with $G$, preserving $\omega$, and transverse to $\ker\omega$, the
quotient $P/\tilde G$ is (at least locally) a quaternionic manifold $(\tilde
M,\tilde \qs)$ with a $G$ action preserving $\tilde\qs$.  Joyce gave a
twistorial proof using the induced principal $G\cx$-bundle $\cP\to Z$. Indeed
(omitting technical details), since $\tilde G$ commutes with $G$ and preserves
$\omega$, there is an induced action of $\tilde G\cx$ on $\cP$; now the
transversality condition implies that the image in $\tilde Z:=\cP/\tilde G\cx$
of any section $s$ of $\cP$ over a twistor line $u$ has normal bundle
$\cO(1)\otimes\C^{2n}$, and $\tilde Z$ is then the twistor space of $(\tilde
M,\qs)$.

This method is now known as the \emph{twist construction}, particularly in the
case that $\dim G=1$ or (more generally) $G$ is abelian (see~\cite{MaSw} and
references therein). Here we apply it to generalize some results on self-dual
conformal $4$-manifolds in~\cite{CaPe:sdc}.

To do this, we use the notion, introduced in~\cite{Joyce} and further studied
in~\cite{Pon,Hit3}, of a \emph{quaternionic complex manifold}, which (for us)
is a quaternionic manifold $(M,\qs)$ equipped with a section of $\qs$ defining
an integrable complex structure on $M$. Then $\pm J$ define a divisor
$\cD^{1,0}+\cD^{0,1}$ in the twistor space $Z$ of $M$ and there is a unique
quaternionic connection $D$ with $DJ=0$~\cite{Alex3}. In fact
in~\cite{Joyce,Pon,Hit3}, the authors restrict to the case that $D$ preserves
a volume form, which we prefer to call a \emph{special} quaternionic complex
manifold. As in~\cite{CaPe:sdc}, it is straightforward to see that $(M,\qs,J)$
is special if and only if $[\cD^{1,0}+\cD^{0,1}]=\cO_Z(2)$---where where
$\cO_Z(2n+2)=\Wedge^nTZ$ as in Remark~\ref{rem:EH}---and (\emph{locally})
\emph{hypercomplex} (i.e., $D$ is flat on $\qs$) if and only if
$[\cD^{1,0}-\cD^{0,1}]=\cO$. In general, $\cL_{(s)}:=[\cD^{1,0}+\cD^{0,1}]\tp
\cO_Z(-2)$ and $\cL_{(h)}:=[\cD^{1,0}-\cD^{0,1}]$ are degree zero line bundles
on $Z$, and so correspond to an $\R^+$-instanton $L_{(s)}$ (which is in fact
$D$ on a root of $\Wedge^{4n}TN$) and an $S^1$-instanton $L_{(h)}$ on $M$
(which is in fact $D$ on $J^\perp\sub\qs$).

If $\tilde G$ preserves $J$ as well as $\qs$ in the twist construction, then
$\tilde G\cx$ preserves the inverse image of $\cD^{1,0}+\cD^{0,1}$ in $\cP$,
hence $\tilde M$ is also a quaternionic complex manifold.  Furthermore, if
$(M,\qs,J)$ is special or hypercomplex, and $\tilde G$ preserves the
$D$-parallel sections of $L_{(s)}$ or $L_{(h)}$ respectively, then $\tilde M$
will also be special or hypercomplex accordingly.

When $\dim\tilde G=1$, the $\tilde G$ action always lifts (at least locally on
$M$) but the lift is not unique. In more invariant terms, $\cP\to Z$ has an
action of a complex $2$-torus $\mathbb T\cx$, and its principal bundle
structure over $Z$ is a $\C\mult$ subgroup of $\mathbb T\cx$. Thus there is a
family of twists of $M$ whose twistor spaces are quotients of $\cP$ by other
$\C\mult$ subgroups of $\mathbb T\cx$.

In this case, we can (in particular) take the $G$-bundle over $M$ to be
$L_{(s)}\mult$ or $L_{(h)}\mult$, so that $\cP\to Z$ is either
$\cL_{(s)}\mult$ or $\cL_{(h)}\mult$. Then the pullback $\cR$ of $\cL_{(s)}$
or $\cL_{(h)}$ to $\cP$ has a tautological nonvanishing section and so there
is a homomorphism $\beta\colon\mathbb T\cx\to\C\mult$ via the action on
$H^0(\cP,\cR)\cong\C$.  When this action is trivial all twists are special or
hypercomplex (respectively) and we already considered this situation (for more
general twists) above. Otherwise, the identity component of $\ker\beta$ is a
distinguished $\C\mult$ subgroup of $\mathbb T\cx$ such that (wherever it is
transverse) the quotient $\tilde Z$ is the twistor space of a special
quaternionic complex or hypercomplex manifold respectively. We summarize as
follows.

\begin{prop}\label{p:twist} Let $(M,\qs,J)$ be a quaternionic complex
manifold which is either not special or not hypercomplex, but admits a local
$S^1$ action preserving $\qs$ and $J$. Then there is locally a twist of $M$
\textup(by $L_{(s)}\mult$ or $L_{(h)}\mult$\textup) which is special or
hypercomplex \textup(respectively\textup).
\end{prop}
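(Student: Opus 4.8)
The plan is to apply the twist construction of \S\ref{s:twist}, choosing the instanton bundle so that the degree-zero line bundle controlling specialness (resp.\ hypercomplexity) is trivialized after twisting. Assume first that $(M,\qs,J)$ is not special, and take the $\R^+$-instanton $V=L_{(s)}\mult$, so that $\cP\to Z$ is $\cL_{(s)}\mult$, the complement of the zero section in $\cL_{(s)}=[\cD^{1,0}+\cD^{0,1}]\tp\cO_Z(-2)$. Since the $S^1$-action preserves $\qs$ and $J$, it preserves the divisor $\cD^{1,0}+\cD^{0,1}$ and $\cO_Z(2)$, hence $\cL_{(s)}$; lifting this action to $\cP$ and combining it with the structure group yields the complex $2$-torus $\mathbb T\cx$ acting on $\cP$, whose structure subgroup $\C\mult$ recovers $\cP\to Z$ by quotient. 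The goal is to single out a \emph{different} $\C\mult\sub\mathbb T\cx$ whose quotient is the twistor space of a special manifold.

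Because $M$ is not special, $\cL_{(s)}$ is a nontrivial degree-zero line bundle, and as in \S\ref{s:twist} the space $H^0(\cP,\cR)\cong\C$ is spanned by the tautological section $\sigma$ of $\cR=\pi^*\cL_{(s)}$, whose value at $p\in\cP$ is $p$ itself regarded as an element of $(\cL_{(s)})_{\pi(p)}$. The $\mathbb T\cx$-action on $\sigma$ defines the character $\beta\colon\mathbb T\cx\to\C\mult$; the structure subgroup acts on $\sigma$ with nonzero weight, so $\beta$ is nontrivial and $(\ker\beta)^0$ is a $\C\mult$ subgroup of $\mathbb T\cx$ distinct from the structure subgroup. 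Setting $\tilde Z:=\cP/(\ker\beta)^0$ thus produces a genuinely new twist $\tilde M$ wherever $(\ker\beta)^0$ is transverse to the twistor lines.

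By construction $\sigma$ is $(\ker\beta)^0$-invariant, so it descends to a nonvanishing holomorphic section of the line bundle $\tilde{\cL}_{(s)}=[\cD^{1,0}+\cD^{0,1}]\tp\cO_{\tilde Z}(-2)$ governing specialness of $\tilde M$. This trivializes $\tilde{\cL}_{(s)}$, so $[\cD^{1,0}+\cD^{0,1}]=\cO_{\tilde Z}(2)$ and $\tilde M$ is special. The non-hypercomplex case is strictly parallel: one takes $V=L_{(h)}\mult$, so that $\cP=\cL_{(h)}\mult$, forms $\beta$ from the tautological section of $\pi^*\cL_{(h)}$, and quotients by $(\ker\beta)^0$; the descended section now trivializes $\cL_{(h)}=[\cD^{1,0}-\cD^{0,1}]$, exhibiting $\tilde M$ as hypercomplex.

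I expect the main obstacle to be transversality. One must verify that $(\ker\beta)^0$ meets the fibres of $\cP\to Z$ transversely along each section of $\cP$ over a twistor line, so that the section descends to a genuine twistor line in $\tilde Z$ with normal bundle $\cO(1)\tp\C^{2n}$, as demanded by the characterization of twistor spaces in \S\ref{s:qtt}; this transversality can fail on a proper subset, which accounts for the local nature of the conclusion. A secondary technical point is to confirm that the descended divisor and $\cO_{\tilde Z}(2)$ are exactly the images under $\tilde Z=\cP/(\ker\beta)^0$ of the $\mathbb T\cx$-invariant divisor and line bundle pulled back to $\cP$, so that the trivialization genuinely identifies $\tilde{\cL}_{(s)}$ (resp.\ $\cL_{(h)}$) with $\cO$.
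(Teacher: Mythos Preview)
Your proposal is correct and follows essentially the same route as the paper. The paper's argument is the discussion immediately preceding the proposition (which is stated as a summary): take $\cP=\cL_{(s)}\mult$ or $\cL_{(h)}\mult$, use the tautological section of $\cR$ to define the character $\beta\colon\mathbb T\cx\to\C\mult$, and quotient by the identity component of $\ker\beta$; your write-up spells out the same steps with a little more detail (in particular, why $\beta$ is nontrivial and why the tautological section descends), and your caveats about transversality and the identification of the descended line bundle match the paper's parenthetical ``wherever it is transverse'' and its implicit use of the earlier paragraph on how $\cD^{1,0}+\cD^{0,1}$ behaves under twists.
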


A special case of this result arises in one direction of the Haydys--Hitchin
correspondence~\cite{Hay,Hit2} between quaternionic K\"ahler and hyperk\"ahler
manifolds with $S^1$ actions. Suppose that $(M,\qs,g)$ is a quaternionic
K\"ahler manifold (of nonzero scalar curvature). Then its twistor space $Z$ is
a holomorphic contact manifold, where the contact distribution is the kernel
of an $\cO_Z(2)$-valued $1$-form $\eta$, invariant under the real structure
$\tau$, and such that (quaternionic) Killing vector fields on $M$ correspond
to $\tau$-invariant sections of $\cO_Z(2)$ by contracting the induced contact
vector field on $Z$ with $\eta$~\cite{Sal0}. Now if $(M,\qs,g)$ has $S^1$
symmetry, the zero set of the section of $\cO_Z(2)$ corresponding to the
generator of the action is a $\C\mult$-invariant degree two divisor which may
be written as $\cD^{1,0}+\cD^{0,1}$. By construction, $\cL_{(s)}$ is trivial,
and so $(M,\qs)$ has a special quaternionic complex structure.  There is
therefore locally a twist of $M$ by $L_{(h)}\mult$ which is hyperk\"ahler with
an $S^1$ action.

\subsection{Twisted Swann bundles and Armstrong cones}

Recall that if $(M,\qs)$ is a quaternionic manifold then the total space of
the principal $\CO(3)$-bundle $\pi_\swb\colon \swb_M\to M$ of oriented
conformal frames $\lambda (J_1,J_2,J_3):\lambda\in\cO_M(1)^+,\,J_i^2 = -\id
=J_1J_2J_3$ of $\cO_M(1)\tp\qs$ has a canonical hypercomplex structure (where
$\cO_M(1)$ is an oriented real line bundle with
$\cO_M(2n+2)=\Wedge^{4n}TM$). Indeed, $\pi_\swb^*TM$ has a tautological
hypercomplex structure, and this lifts to a hypercomplex structure on
$T\swb_M$ using any quaternionic connection $D$ and the hypercomplex structure
on the vertical bundle of $\swb_M$ coming from the isomorphism of $\CO(3)$
with $\HQ\mult/\{\pm1\}$. This construction was introduced by
A.~Swann~\cite{Swann} for quaternionic K\"ahler manifolds, and $\swb_M$ is
called the \emph{Swann bundle} or \emph{hypercomplex cone} of $M$.  The
general case is studied e.g.~in~\cite{Joyce,Joyce2,Sal,PPS}.

In~\cite{Arm}, S.~Armstrong considers cone constructions for other parabolic
geometries of projective type and in particular shows that for any
c-projective manifold $S$, the total space $\cC_S$ of $\cO_S(1)\mult$ carries
a canonical complex affine connection: indeed, as explained in~\cite{CEMN},
$T\cC_S$ is canonically isomorphic to the pullback of the standard
representation of the Cartan connection on $S$ (the \emph{standard tractor
  bundle}). Now our construction shows that any c-projective manifold (with
type $(1,1)$ curvature) is a maximal totally complex submanifold of a
quaternionic manifold $M$ with respect to a section $J$ of $\qs|_S$. The set
of $\lambda(J_1,J_2,J_3)\in\swb|_S$ with $J_3=J$ is a principal
$\C\mult$-subbundle, and this submanifold realises the Armstrong cone $\cC_S$
of $S$ as a maximal totally complex submanifold of $\swb$ with respect to the
third tautological complex structure.

Thus it is natural to expect that the quaternionic Feix--Kaledin and cone
constructions commute, i.e., when $M$ is obtained by applying the quaternionic
Feix--Kaledin construction to $S$, its hypercomplex cone is given locally by
applying Feix's hypercomplex construction to the Armstrong cone of $S$.  To
see this, we will in fact show more: that the same holds for twisted versions
of both constructions.

In~\cite{Joyce2,PPS}, it was observed that the Swann bundle construction can
be twisted by an $\R^+$-instanton (an oriented real hyperholomorphic line
bundle $L$): one can replace $\cO_M(1)\tp\qs$ with $L\tp\cO_M(1)\tp\qs$ above
to obtain a hypercomplex manifold $\swb_L$ called a \emph{twisted Swann
  bundle}. On the other hand, if $\cL\to S$ is a complex line bundle with
connection $\nabla$, the \emph{twisted Armstrong cone} is $\cC_\cL=
(\cL\otimes \cO_S(1))\mult$, with the affine connection induced by the tensor
product of the standard tractor connection and $\nabla$.

\begin{thm} \label{Swann}
Let $(M,\qs)$ be obtained from the quaternionic Feix--Kaledin construction
applied to the c-projective manifold $(S,\Pi_c)$ of type $(1,1)$ and the line
bundle $\cL$ with connection $\nabla$ of type $(1,1)$ as in
Theorem~\textup{\ref{maintheorem}}. Then the hypercomplex manifold obtained
from twisted Armstrong cone $\cC_\cL$ by Feix's hypercomplex construction is an
open subset of the twisted Swann bundle of $M$, where the pullback of
$\cL_Z^{\;2}$ to $\hat Z$ is $p^*(\cL\tp\overline\cL)$.
\end{thm}
\begin{proof} We prove this result by identifying the twistor spaces.
As observed by Hitchin~\cite{Hit2} in the untwisted quaternionic K\"ahler
case, if $Z$ is the twistor space of $M$, $\cL_Z$ is the Penrose--Ward
transform of $L$ and $\cO_Z(2n+2)=\Wedge^nTZ$ as in Remark~\ref{rem:EH} then
the twistor space of $\swb_L$ is $(\C^2\tp\cL_Z\tp\cO_Z(1))\mult/\{\pm1\}$:
this space has a natural $\C\mult$ action induced by scalar multiplication on
$\C^2$, and the quotient is $Z\times \C\Proj^1$.

On the other hand, the principal $\C\mult\times\C\mult$ bundle $\cC_\cL\cx:=
\Lp\mult\times\Lm\mult\to \Sc$ is a complexification of the twisted Armstrong
cone $\cC_\cL=(\cL\otimes \cO_S(1))\mult$, and so $\Lp^*\oplus\Lm^*\to\Sc$ is
an associated bundle $\cC_\cL\cx\times_{\C\mult\times\C\mult}\C^2$ with
projectivization $\hat Z=\cC_\cL\cx\times_{\C\mult\times\C\mult} \C\Proj^1$
(where the diagonal subgroup acts trivially on $\C\Proj^1$). Thus
$\cC_\cL\cx\times\C\Proj^1$ is a principal $\C\mult\times\C\mult$ bundle over
$\hat Z$. Hence the Feix twistor space of $\cC_\cL$ is an open subset of a
twist of $Z\times\C^2$ (where $Z$ is the twistor space of $M$) by a line
bundle of degree one (so that the twistor lines in $Z$ lift to twistor
lines). This line bundle therefore has the form $\cL_Z\tp\cO_Z(1)$ for some
degree zero line bundle $\cL_Z$, and the pullback of $\cL_Z\tp\cO_Z(1)$ by
$\phi\colon \hat Z\to Z$ must be $\cO_{\Lp^*\ds\Lm^*}(1)$.

We now take top exterior powers of the short exact sequence
\[
0\rightarrow Vp\rightarrow\phi^*TZ \rightarrow \Nb\hp\ds\Nb\am\rightarrow 0.
\]
over $\hat Z$ to obtain
\[
\phi^*\cO_Z(2n+2)=\phi^*(\Wedge^{2n+1}TZ)
\cong Vp\tp\Wedge^n\Nb\hp\tp\Wedge^n\Nb\am,
\]
where the vertical bundle $Vp$ to the fibres of $p\colon \hat Z\to \Sc$
satisfies
\[
Vp\cong\cO_{\Lp^*\ds\Lm^*}(2)\otimes p^*\Lp^*\otimes p^*\Lm^*,
\]
and therefore (using Proposition~\ref{p:normal})
\begin{align*}
\phi^*\cO_Z(2n+2)&\cong
\cO_{\Lp^*\ds\Lm^*}(2n+2)\tp p^*(\Wedge^nTS^{1,0}\tp\Lp^{-(n+1)}
\tp\Wedge^nTS^{0,1}\tp\Lm^{-(n+1)})\\
&\cong\cO_{\Lp^*\ds\Lm^*}(2n+2)\tp p^*(\cL\tp\overline\cL)^{-(n+1)}.
\end{align*}
We conclude that the Feix twistor space of $\cC_\cL$ is a double
cover of an open subset of the twisted Swann bundle twistor space, with
$\phi^*\cL_Z^{\;2}\cong p^*(\cL\tp\overline\cL)$ as required.
\end{proof}

\subsection{Quaternionic K\"ahler metrics and the Haydys--Hitchin
correspondence}

The quaternionic Feix--Kaledin construction produces a hyperk\"ahler metric or
hypercomplex structure on $M$ when it reduces to the original construction by
Feix. It is natural to ask when the quaternionic manifold $M$ admits an
$S^1$-invariant quaternionic K\"ahler metric of nonzero scalar curvature. For
example, we have seen that the quaternionic K\"ahler symmetric spaces
$\HQ\Proj^n$ and $\Gr_2(\C^{2n+2})$ may be constructed locally from the flat
c-projective structure on $\C\Proj^n$ using different twists.

Quaternionic K\"ahler manifolds $(M,\qs,g)$ with $S^1$ actions have been
studied by A.~Haydys and N.~Hitchin~\cite{Hay,Hit2} who associate to any such
manifold a hyperk\"ahler manifold with a non-triholomorphic $S^1$ action.  As
special cases, the Haydys--Hitchin correspondence relates the rigid c-map
construction of semi-flat hyperk\"ahler metrics to the quaternionic K\"ahler
c-map~\cite{ACDM,CFG,Hit2,MaSw}, and it generalizes the link between
$S^1$-invariant self-dual Einstein manifolds of nonzero and zero scalar
curvature~\cite{FiSa,Prz,Tod}.

The quaternionic Feix--Kaledin construction complements these methods (which
generally apply on the open subset where the $S^1$ action is locally free) by
describing the correspondence on a neighbourhood of a maximal totally complex
submanifold of fixed points of the $S^1$ action.

\begin{thm}\label{thm:qK} Let $(S,J,g)$ be a K\"ahler--Einstein $2n$-manifold
  of nonzero scalar curvature
  with the c-projective structure and connection on $\cO_S(1)$ induced by the
  Levi-Civita connection.  Then the quaternionic Feix--Kaledin construction,
  with $\cL=\cO_S(k)$ a tensor power of $\cO_S(1)$, yields
  \textup(locally\textup) a quaternionic K\"ahler manifold $M_k$ in the
  Haydys--Hitchin family associated to the hyperk\"ahler manifold $M_{-1}$
  obtained from the Feix--Kaledin construction.
\end{thm}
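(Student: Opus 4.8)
The plan is to identify the hyperk\"ahler member of the family via Proposition~\ref{propfeix}, to establish the quaternionic K\"ahler property of the remaining members through the twisted Swann bundle of Theorem~\ref{Swann} together with a Calabi cone metric, and to organise the whole family using the twist construction of~\S\ref{s:twist}. At $k=-1$ we have $\cL=\cO_S(-1)$, and since the Levi-Civita connection $D$ of a K\"ahler--Einstein metric has c-projective curvature of type $(1,1)$ with normalised Ricci tensor $\ri^D$ also of type $(1,1)$, Proposition~\ref{propfeix} shows that $M_{-1}$ is hyperk\"ahler; it carries the rotating (non-triholomorphic) $S^1$ action coming from scalar multiplication in the fibres.

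For general $k$, I would note that $\cL\tp\cO_S(1)=\cO_S(k+1)$, so the twisted Armstrong cone of Theorem~\ref{Swann} is $\cC_\cL=\cO_S(k+1)\mult$, and then argue that this cone is K\"ahler. Indeed, for a K\"ahler--Einstein metric the Levi-Civita connection on $\cO_S(m)$ has curvature $\tfrac{m}{n+1}\Ric=\tfrac{m\Lambda}{n+1}\omega$ proportional to the K\"ahler form, so (for $m=k+1\neq0$) the standard Calabi cone metric on $\cO_S(k+1)\mult$ is K\"ahler with Levi-Civita connection equal to the complex affine connection of the Armstrong cone. By the hyperk\"ahler half of Proposition~\ref{propfeix}, the Feix--Kaledin manifold of $\cC_\cL$ is hyperk\"ahler, and by Theorem~\ref{Swann} it is an open subset of the twisted Swann bundle of $M_k$. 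Since a quaternionic manifold whose twisted Swann bundle is hyperk\"ahler is quaternionic K\"ahler~\cite{Swann,PPS}, we conclude that $M_k$ is quaternionic K\"ahler.

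To identify $\{M_k\}$ as the Haydys--Hitchin family associated to $M_{-1}$, I would invoke the twist construction directly. Along the leaves of the $(1,0)$ and $(0,1)$ foliations the bundle $\cO_S(1)$ carries the flat connection induced by the leafwise projectively flat K\"ahler--Einstein structure, so passing from $k$ to $k'$ twists the construction by $\cO_S(k-k')$, a line bundle flat along $\alpha$-submanifolds and hence an $S^1$-instanton on each $M_k$. Thus all the twistor spaces $Z_k$ are quotients of a single principal $\mathbb T\cx$-bundle $\cP$ by distinct $\C\mult$-subgroups, which is exactly the structure of the Haydys--Hitchin family described in~\S\ref{s:twist} and Proposition~\ref{p:twist}; the distinguished quotient admitting the fibration to $\C\Proj^1$ of a hyperk\"ahler twistor space is the one at $k=-1$, confirming that $M_{-1}$ is the associated hyperk\"ahler manifold.

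The hard part will be the K\"ahler cone step: one must verify that the Calabi cone metric on $\cO_S(k+1)\mult$ is K\"ahler with Levi-Civita connection the complex affine connection of the Armstrong cone, so that the hyperk\"ahler case of Proposition~\ref{propfeix} genuinely applies. This is precisely where the Einstein hypothesis is indispensable---without it the curvature of $\cO_S(m)$ is only of type $(1,1)$ rather than proportional to $\omega$, so the cone is not K\"ahler, the twisted Swann bundle is merely hypercomplex, and $M_k$ is quaternionic but not quaternionic K\"ahler.
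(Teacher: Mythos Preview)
Your overall strategy---Armstrong cone K\"ahler, hence its Feix--Kaledin manifold hyperk\"ahler, hence the Swann bundle of $M_k$ hyperk\"ahler, hence $M_k$ quaternionic K\"ahler, then organise the family by twists---matches the paper's. But there is a genuine gap in the Swann step, and two places where the paper argues differently.

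\textbf{The gap.} You assert that ``a quaternionic manifold whose \emph{twisted} Swann bundle is hyperk\"ahler is quaternionic K\"ahler,'' citing \cite{Swann,PPS}. That is not what those references give: Swann's characterisation is for the \emph{untwisted} Swann bundle, and \cite{PPS} constructs hypercomplex structures on twists without a converse quaternionic K\"ahler statement. The paper closes this gap by observing that since $\cL=\cO_S(k)$ carries a unitary (Chern) connection, the twist in Theorem~\ref{Swann} is trivial (because $\phi^*\cL_Z^{\,2}\cong p^*(\cL\tp\overline\cL)$ is trivial for unitary $\cL$), so one is dealing with the ordinary Swann bundle and Swann's original result applies. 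You need this observation; without it the implication does not follow. The paper also inserts a short argument that the $S^1$ action lifts to an \emph{isometric} (not merely homothetic) triholomorphic action on the common Swann bundle, which is what yields an $S^1$-\emph{invariant} quaternionic K\"ahler metric---necessary for the Haydys--Hitchin correspondence to make sense.

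\textbf{Different routes.} For the K\"ahler cone step the paper does not use a Calabi cone metric. Instead it invokes the fact (\cite[Prop.~4.8]{CEMN}) that on a K\"ahler--Einstein manifold the normal c-projective Cartan connection preserves a metric on the standard tractor bundle; since $T\cC_\cL$ is the pullback of that tractor bundle tensored with the unitary $\cL$, the Armstrong connection on $\cC_\cL$ is automatically the Levi--Civita connection of a K\"ahler metric. Your Calabi-cone approach may well be equivalent, but it requires you to prove that the cone Levi--Civita connection coincides with the Armstrong (tractor) connection, which is exactly the content of the tractor-metric statement; the paper's route sidesteps that identification. For the Haydys--Hitchin identification of $M_{-1}$, the paper does not argue abstractly via instanton twists between the $Z_k$. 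It first notes that the cones $\cC_k$ for $k\neq -1$ are all covered by $\cC_0$, so their Swann bundles are locally isomorphic to a single hyperk\"ahler manifold, and then runs the Haydys--Hitchin twist from a fixed $Z_k$: the section of $\cO_{Z_k}(2)$ corresponding to the $S^1$ action cuts out the divisor $\cD^{1,0}+\cD^{0,1}$, and a blow-up argument along $S^{1,0}\sqcup S^{0,1}$ in the resulting hyperk\"ahler twistor space $\tilde Z$ shows (via Theorem~\ref{conversetheorem}) that $\tilde M$ is locally $M_{-1}$. Your version gestures at the right mechanism but does not pin down why the hyperk\"ahler member of the family built from $M_k$ is $M_{-1}$ rather than some other twist.
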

\begin{proof} Since $g$ is K\"ahler--Einstein, the normal Cartan connection
of the c-projective structure preserves a metric on the standard tractor
bundle (see e.g.~\cite[Prop.~4.8]{CEMN}) and hence so does its twist by a
unitary connection. Since this connection is also torsion-free, the twisted
Armstrong cone $\cC_k=\cL \otimes\cO_S(1)$ of $S$ is K\"ahler, so the
Feix--Kaledin construction yields a hyperk\"ahler manifold, which is an open
subset of the (untwisted) Swann bundle of $M_k$ by Theorem~\ref{Swann}, since
$\cL$ is a unitary bundle.  Furthermore, for $k\neq -1$ each $\cC_k$ is
covered by $\cC_0$, so the Swann bundles are all locally isomorphic to a fixed
hyperk\"ahler manifold $\swb$. The circle actions on $M_k$ lift to a
triholomorphic circle action on $\swb$, which preserves the Obata connection,
i.e., the Levi-Civita connection of the hyperk\"ahler metric. However, a
homothetic circle action must be isometric. It follows that each $M_k$ admits
an $S^1$-invariant quaternionic K\"ahler metric~\cite{Swann}.

To see that $M_{-1}$ is (locally) the hyperk\"ahler manifold $\tilde M$ in the
family, we use the twist construction of the latter from $Z_k$ as
in~\S\ref{s:twist} and~\cite{Hit2}. Thus the twistor space
$\tilde\zeta\colon\tilde Z\to \C\Proj^1$ of $\tilde M$ is a $\C\mult$ quotient
of $\cL_k\mult$ where $\cL_k\to Z_k$ is the divisor line bundle of
$\cD^{1,0}-\cD^{0,1}$ and $\cD^{1,0}+\cD^{0,1}$ is the zero-set of the section
of $\cO_Z(2)$ corresponding to $S^1$ action on $M_k$.  In particular,
$S^{1,0}\sub\cD^{1,0}$, $S^{0,1}\sub\cD^{0,1}$ and $\tau(\cD_0)=\cD_\infty$.
The vertical $\C\mult$ action on $\cL_k\mult\to Z$ descends to a $\C\mult$
action on $\tilde Z$ preserving the divisor $\tilde\cD^{1,0}+\tilde\cD^{0,1}
=\tilde\zeta^{-1}(\{0\}+\{\infty\})$, and fixing copies of $S^{1,0}$ and
$S^{0,1}$. Thus the induced $S^1$-action on $\tilde M$ preserves the complex
structures $\pm J$ in the hyperk\"ahler family corresponding to these
divisors, and fixes a copy of $S$ which is maximal totally complex with
respect to $\pm J$. As in proof of Theorem~\ref{conversetheorem}, it now
follows that the blow-up of $\tilde Z$ along $S^{1,0}\sqcup S^{0,1}$ is
locally isomorphic to $S\cx\times \C\Proj^1$, where the $\C\Proj^1$-bundle in
Theorem~\ref{conversetheorem} has been trivialized by the pullback of
$\tilde\zeta$ to the blow-up of $\tilde Z$. Hence $\tilde M$ is locally
isomorphic to $M_{-1}$.
\end{proof}

We end by remarking that this theorem should generalize to the case that $S$
is merely a c-projective manifold of type $(1,1)$ and $\cL=\cO_S(k)$ is a
tensor power of $\cO_S(1)$, yielding a family quaternionic manifolds $M_k$
with $M_{-1}$ hypercomplex.

\acknowledge We would like to thank Roger Bielawski, Fran Burstall, Nigel
Hitchin, Lionel Mason, Andrew Swann and the anonymous referees for helpful
comments which led to improvements in both the content and the presentation of
the paper.  We also thank the Eduard Cech Institute and the Czech Grant
Agency, grant nr.~P201/12/G028, for hospitality and financial support.

\end{document}